
\documentclass[12pt,reqno]{amsart}
\usepackage{fullpage,url,amssymb,enumerate,colonequals}
 \usepackage[all]{xy} 
\usepackage{mathrsfs} 


\usepackage{color}

\newcommand{\defi}[1]{\textsf{#1}} 


\newcommand{\C}{\mathbb{C}}
\newcommand{\rmE}{\operatorname{E}}
\newcommand{\F}{\mathbb{F}}
\newcommand{\G}{\mathbb{G}}

\newcommand{\PP}{\mathbb{P}}
\newcommand{\Q}{\mathbb{Q}}
\newcommand{\R}{\mathbb{R}}
\newcommand{\rmT}{\operatorname{T}}
\newcommand{\Z}{\mathbb{Z}}
\newcommand{\Qbar}{{\overline{\Q}}}
\newcommand{\Xsep}{{X^{\operatorname{sep}}}}
\newcommand{\Xbar}{{\overline{X}}}
\newcommand{\Ysep}{{Y^{\operatorname{sep}}}}

\newcommand{\kbar}{{\overline{k}}}

\newcommand{\Lbar}{{\overline{L}}}
\newcommand{\ksep}{{k^{\operatorname{sep}}}}


\newcommand{\calD}{\mathcal{D}}

\newcommand{\calS}{\mathcal{S}}

\newcommand{\calU}{\mathcal{U}}

\newcommand{\calX}{\mathcal{X}}
\newcommand{\calY}{\mathcal{Y}}
\newcommand{\calZ}{\mathcal{Z}}

\newcommand{\FF}{\mathscr{F}}
\newcommand{\GG}{\mathscr{G}}

\newcommand{\LL}{\mathscr{L}}
\newcommand{\MM}{\mathscr{M}}
\newcommand{\OO}{\mathscr{O}}


\DeclareMathOperator{\Aut}{Aut}

\DeclareMathOperator{\Char}{char}

\DeclareMathOperator{\cl}{cl}

\DeclareMathOperator{\Div}{Div}

\DeclareMathOperator{\EffDiv}{\bf EffDiv}

\DeclareMathOperator{\Frac}{Frac}

\DeclareMathOperator{\Gal}{Gal}
\DeclareMathOperator{\Gr}{Gr}
\DeclareMathOperator{\Hilb}{Hilb}
\DeclareMathOperator{\Hom}{Hom}

\DeclareMathOperator{\NS}{NS}
\DeclareMathOperator{\Num}{Num}

\DeclareMathOperator{\Pic}{Pic}
\DeclareMathOperator{\PIC}{\bf Pic}

\DeclareMathOperator{\Proj}{Proj}

\DeclareMathOperator{\rk}{rk}

\DeclareMathOperator{\Spec}{Spec}

\DeclareMathOperator{\Tate}{Tate}
\DeclareMathOperator{\tr}{tr}



\newcommand{\et}{{\textup{\'et}}}

\newcommand{\sep}{{\operatorname{sep}}}

\newcommand{\tH}{{\operatorname{th}}}
\newcommand{\tors}{{\operatorname{tors}}}

\newcommand{\Cech}{\v{C}ech}

\newcommand{\GL}{\operatorname{GL}}
\newcommand{\HH}{{\operatorname{H}}}

\newcommand{\directsum}{\oplus} 
\newcommand{\Directsum}{\bigoplus} 
\newcommand{\injects}{\hookrightarrow}
\newcommand{\intersect}{\cap} 
\newcommand{\isom}{\simeq}

\renewcommand{\setminus}{-} 
\newcommand{\surjects}{\twoheadrightarrow}
\newcommand{\tensor}{\otimes} 
\newcommand{\Union}{\bigcup} 

\newcommand{\isomto}{\overset{\sim}{\rightarrow}}

\newcommand{\To}{\longrightarrow}

\numberwithin{equation}{section}

\newtheorem{theorem}[equation]{Theorem}
\newtheorem{lemma}[equation]{Lemma}
\newtheorem{corollary}[equation]{Corollary}
\newtheorem{proposition}[equation]{Proposition}

\newtheorem*{conjectureT}{Conjecture~$\rmT^p(X,\ell)$}
\newtheorem*{conjectureE}{Conjecture~$\rmE^p(X,\ell)$}
\newtheorem{hypothesis}[equation]{Hypothesis}

\theoremstyle{definition}
\newtheorem{definition}[equation]{Definition}
\newtheorem{setup}[equation]{Setup}

\theoremstyle{remark}
\newtheorem{remark}[equation]{Remark}

\usepackage[
	backref,
	pdfauthor={Bjorn Poonen}, 
]{hyperref}
\usepackage[alphabetic,backrefs,lite]{amsrefs} 

\begin{document}

\title{Computing N\'eron--Severi groups and cycle class groups}
\subjclass[2010]{Primary 14C22; Secondary 14C25, 14F20, 14G13}
\keywords{N\'eron--Severi groups, cycle class groups, Tate conjecture}
\author{Bjorn Poonen}
\thanks{B.P.\ was supported by the Guggenheim Foundation and National Science Foundation grants DMS-0841321 and DMS-1069236.}
\address{Department of Mathematics, Massachusetts Institute of Technology, Cambridge, MA 02139-4307, USA}
\email{poonen@math.mit.edu}
\urladdr{\url{http://math.mit.edu/~poonen/}}

\author{Damiano Testa}
\address{Mathematics Institute, University of Warwick, Coventry, CV4 7AL, United Kingdom}
\email{adomani@gmail.com}
\urladdr{\url{http://homepages.warwick.ac.uk/~maskal/zone}}

\author{Ronald van Luijk}
\address{Mathematisch Instituut, Universiteit Leiden, Postbus 9512, 2300 RA, Leiden, the Netherlands}
\email{rvl@math.leidenuniv.nl}
\urladdr{\url{http://www.math.leidenuniv.nl/~rvl/}}

\date{July 23, 2014}

\begin{abstract}
Assuming the Tate conjecture and the computability of \'etale cohomology
with finite coefficients, we give an algorithm 
that computes the N\'eron--Severi group of 
any smooth projective geometrically integral variety,
and also the rank of the group of numerical equivalence classes
of codimension~$p$ cycles for any $p$.
\end{abstract}

\maketitle

\section{Introduction}\label{S:introduction}

Let $k$ be a field, and let $\ksep$ be a separable closure.
Let $X$ be a smooth projective geometrically integral $k$-variety,
and let $\Xsep \colonequals X \times_k \ksep$.

If $k=\C$, then the Lefschetz $(1,1)$ theorem
identifies the N\'eron--Severi group $\NS X$ 
(see Section~\ref{S:notation} for definitions) 
with the subgroup of $\HH^2(X(\C),\Z)$
mapping into the subspace $\HH^{1,1}(X)$ of $\HH^2(X(\C),\C)$.
Analogously, if $k$ is a finitely generated field,
then the Tate conjecture 
describes $(\NS \Xsep) \tensor \Q_\ell$
in terms of the action of $\Gal(\ksep/k)$ on $\HH^2_{\et}(\Xsep,\Q_\ell(1))$,
for any prime $\ell \ne \Char k$.

Can such descriptions be transformed into algorithms for computing $\NS \Xsep$?
To make sense of this question,
we assume that $k$ is replaced by 
a finitely generated subfield over which $X$ is defined;
then $X$ and $k$ admit a finite description suitable 
for computer input (see Section~\ref{S:explicit}).
Using the Lefschetz $(1,1)$ theorem 
involves working over the uncountable field $\C$,
while using the Tate conjecture 
involves an action of an uncountable Galois group 
on a vector space over an uncountable field $\Q_\ell$,
so it is not clear a priori that either approach 
can be made into an algorithm.

In this paper, assuming only the ability to compute
the finite Galois modules $\HH^i_{\et}(\Xsep,\mu_{\ell^n})$ 
for each $i \le 2$ and $n$, 
we give an algorithm for computing $\NS \Xsep$
that terminates if and only if the Tate conjecture holds for $X$
(Remark~\ref{R:NS}).
Moreover, if $k$ is finite, 
then we can even avoid computing the Galois modules
$\HH^i_{\et}(\Xsep,\mu_{\ell^n})$, 
by instead using point-counting to compute the zeta function of $X$,
as is well known 
(Theorem~\ref{T:Num variant}\eqref{I:algorithm B}).
In any case, we give an algorithm to compute $\HH^i_{\et}(\Xsep,\mu_{\ell^n})$ 
for any variety in characteristic~$0$ (Theorem~\ref{T:compute etale in char 0}) 
and any variety that lifts to characteristic~$0$ 
(Corollary~\ref{C:compute etale by lifting});
also, after the first version of the present article was made available, 
Madore and Orgogozo announced an algorithm to compute it 
in general \cite{Madore-Orgogozo-preprint}*{Th\'eor\`eme~0.9}
(they work over an algebraically closed ground field,
but the cohomology groups are unchanged in passing from $\ksep$ to $\kbar$).

Combining our results with the truth of the Tate conjecture for K3 surfaces $X$
over finitely generated fields of characteristic not~$2$
(\cites{Nygaard1983,Nygaard-Ogus1985,Maulik-preprint,Charles2013,Madapusi-preprint})
yields an unconditional algorithm for computing $\NS \Xsep$ for 
all such K3 surfaces 
(Theorem~\ref{T:unconditional NS for K3}).
(See \cite{Tate1994}*{Section~5} and \cite{Andre1996b} for some other 
cases in which the Tate conjecture is known.)
We also provide an unconditional algorithm for computing the torsion
subgroup $(\NS \Xsep)_\tors$ for any $X$ over any finitely generated field~$k$ 
(Theorem~\ref{T:NS_tors}).

Finally, we prove also statements for cycles of higher codimension.
In particular, we describe a conditional algorithm that
computes the rank of the group $\Num^p \Xsep$
of codimension~$p$ cycles modulo numerical equivalence
(Theorem~\ref{T:Num}).

If $\ksep$ is replaced by an algebraic closure $\kbar$
in any of the results above,
the resulting analogue holds (Remarks \ref{R:Num Xbar} and~\ref{R:kbar}).

\section{Previous approaches}
\label{S:previous approaches}

Several techniques exist in the literature for obtaining information
on N\'eron--Severi groups:
\begin{itemize}
\item Lower bounds on the rank are often obtained 
by exhibiting divisors explicitly.
\item An initial upper bound is given by the second Betti number,
which is computable (see Proposition~\ref{P:betti}).
\item Over $\C$, Hodge theory provides the improved upper bound $h^{1,1}$,
which again is computable.
(Indeed, software exists for computing
all the Hodge numbers $h^{p,q} \colonequals \dim \HH^q(X,\Omega^p)$,
as a special case of computing cohomology of coherent sheaves
on projective varieties \cite{Vasconcelos1998}*{Appendix~C.3}.)
\item Over a finite field $k$, computation of the zeta function
can yield an improved upper bound: see Section~\ref{S:alternative}
for details.
\item Over finitely generated fields $k$, one can spread out $X$
to a smooth projective scheme $\calX$ over a finitely generated $\Z$-algebra
and reduce modulo maximal ideals to obtain 
injective specialization homomorphisms 
$(\NS \Xsep) \tensor \Q \to (\NS \calX_{\overline{F}}) \tensor \Q$ 
where $F$ is the 
finite residue field
(see \cite{VanLuijk2007-Heron}*{Proposition~6.2} or 
\cite{Maulik-Poonen2012}*{Proposition~3.6}, for example).
Combining this with the method of the previous item
bounds the rank of $\NS \Xsep$.
In some cases, one can prove directly that certain elements of 
$(\NS \calX_{\overline{F}}) \tensor \Q$ 
are not in the image of the specialization homomorphism, 
to improve the bound~\cite{Elsenhans-Jahnel2011-oneprime}.
\item The previous item can be improved also by using more than one reduction
if one takes into account that the specialization homomorphisms 
preserve additional structure, such as the intersection pairing 
in the case $\dim X=2$~\cite{VanLuijk2007}
or the Galois action~\cite{Elsenhans-Jahnel2011}.
In the $\dim X=2$ case, 
the discriminant of the intersection pairing 
can be obtained, up to a square factor,
either from explicit generators for 
$(\NS \calX_{\overline{F}}) \tensor \Q$~\cite{VanLuijk2007}
or from the Artin--Tate conjecture~\cite{Kloosterman2007}.
F.~Charles proved that for a K3 surface $X$ over a number field,
the information from reductions is sufficient to determine
the rank of $\NS \Xsep$, assuming the Hodge conjecture for 
$2$-cycles on $X \times X$~\cite{Charles2014}.
\item If $X$ is a quotient of another variety $Y$ by a finite group $G$,
then the natural map $(\NS \Xsep) \tensor \Q \to ((\NS \Ysep) \tensor \Q)^G$
is an isomorphism.
For instance, this has been applied to \defi{Delsarte surfaces},
i.e., surfaces in $\PP^3$ defined by a homogeneous form with
four monomials, using that they are quotients of 
Fermat surfaces~\cite{Shioda1986}.
\item When $X$ is an elliptic surface, the rank of $\NS \Xsep$
is related to the rank of the Mordell--Weil group of the generic fiber 
\citelist{\cite{Tate1995}*{p.~429}; \cite{Shioda1972}*{Corollary~1.5}; \cite{Shioda1990}*{Corollary~5.3}}.
This has been generalized in various ways, for example to 
fibrations into 
abelian varieties \citelist{\cite{Kahn2009}; \cite{Oguiso2009}*{Theorem~1.1}}.
\item When $X$ is a K3 surface of degree~$2$ over a number field,
the Kuga--Satake construction relates the Hodge classes on $X$
to the Hodge classes on an abelian variety of dimension $2^{19}$.
B.~Hassett, A.~Kresch, and Yu.~Tschinkel use this to give 
an algorithm to compute $\NS \Xsep$ 
for such $X$~\cite{Hassett-Kresch-Tschinkel2013}*{Proposition~19}.
\end{itemize}
Also, \cite{Simpson2008} shows that if one assumes the Hodge conjecture,
then one can decide, given a nice variety $X$ over $\Qbar \subseteq \C$
and a singular homology class $\gamma \in \HH_{2p}(X(\C),\Q)$,
whether $\gamma$ is the class of an algebraic cycle.

\section{Notation}
\label{S:notation}

Given a module $A$ over an integral domain $R$,
let $A_{\tors}$ be its torsion submodule,
let $\widetilde{A} \colonequals A/A_{\tors}$,
and let $\rk A \colonequals \dim_K(A \tensor_R K)$ 
where $K\colonequals \Frac R$.
If $A$ is a submodule of another $R$-module $B$,
the \defi{saturation} of $A$ in $B$ is 
$\{b \in B: nb \in A \textup{ for some nonzero $n \in R$}\}$.
If $A$ is a $G$-module for some group $G$,
then $A^G$ is the subgroup of invariant elements.
We say that a $G$-module $A$ is \defi{finite} (resp. \defi{finitely generated})
if it is so as a set (resp.\ abelian group).

Given a field $k$, let $\kbar$ be an algebraic closure, 
let $\ksep$ be the separable closure inside $\kbar$,
let $G_k \colonequals \Gal(\ksep/k) \isom \Aut(\kbar/k)$, 
and let $\kappa$ be the characteristic of $k$.
A \defi{variety} $X$ over a field $k$ 
is a separated scheme of finite type over $k$.
For such $X$, let $\Xsep \colonequals X \times_k \ksep$ 
and $\Xbar \colonequals X \times_k \kbar$.
Call $X$ \defi{nice} if it is smooth, projective, and geometrically integral.

Suppose that $X$ is a nice $k$-variety.
Let $\Pic X$ be its \defi{Picard group}.
Let $\PIC_{X/k}$ be the \defi{Picard scheme} of $X$ over $k$.
There is an injection $\Pic X \to \PIC_{X/k}(k)$, 
but it is not always surjective.
Let $\PIC^0_{X/k}$ be the connected component of the identity in $\PIC_{X/k}$.
Let $\Pic^0 X \le \Pic X$ be the group
of isomorphism classes of line bundles 
such that the corresponding $k$-point of $\PIC_{X/k}$
lies in $\PIC^0_{X/k}$;
any such line bundle $\LL$ (or divisor representing it) is called 
\defi{algebraically equivalent to $0$}.  Equivalently, a line bundle $\LL$ is 
algebraically equivalent to $0$ if there is a connected variety $B$ and a 
line bundle $\MM$ on $X \times B$
such that $\MM$ restricts to the trivial line bundle above one point of $B$
and to $\LL$ above another
(this holds even over the ground field $k$: take $B$ to be a component
$H$ of $\EffDiv_X$ lying above a translate of $\PIC^0_{X/k}$
as in Lemma~\ref{L:no functors}(\ref{I:nice bijection},\ref{I:A exists})).
Define the \defi{N\'eron--Severi group} $\NS X$ 
as the quotient $\Pic X/\Pic^0 X$;
it can be identified with the set of components of $\PIC_{X/k}$
containing the class of a divisor of $X$ over $k$
(which is stronger than assuming that the component has a $k$-point).
Then $\NS X$ is a finitely generated 
abelian group \cite{Neron1952}*{p.~145,~Th\'{e}or\`{e}me~2}
(see \cite{SGA6}*{XIII.5.1} for another proof).
Let $\PIC^\tau_{X/k}$ be the finite union of connected components of
$\PIC_{X/k}$ parametrizing classes of line bundles
whose class in $\NS \Xbar$ is torsion.

Let $\calZ^p(X)$ be the group of codimension~$p$ cycles on $X$.
Let $\Num^p X$ be the quotient of $\calZ^p(X)$
by the subgroup of cycles numerically equivalent to $0$.
Then $\Num^p X$ is a finite-rank free abelian group.
Let $\calZ^1(X)^\tau$ be the set of divisors $z \in \calZ^1(X)$
having a positive multiple that is algebraically equivalent to $0$.
Let $(\Pic X)^\tau$ be the image of $\calZ^1(X)^\tau$ 
under $\calZ^1(X) \to \Pic X$.

If $m \in \Z_{>0}$ and $\kappa \nmid m$, and $i,p \in \Z$, 
let $\HH^i(\Xsep,(\Z/m\Z)(p))$ be the \'etale cohomology group;
this is a finite abelian group. 
For each prime $\ell \ne \kappa$, define
$\HH^i(\Xsep,\Z_\ell(p))\colonequals 
\varprojlim_n \HH^i(\Xsep,(\Z/\ell^n\Z)(p))$,
a finitely generated $\Z_\ell$-module; 
and define $\HH^i(\Xsep,\Q_\ell(p)) \colonequals 
\HH^i(\Xsep,\Z_\ell(p)) \tensor_{\Z_\ell} \Q_\ell$,
a finite-dimensional $\Q_\ell$-vector space; 
its dimension $b_i(X)$ is independent of $p$,
and is called an \defi{$\ell$-adic Betti number}.

Let $X$ be a nice $k$-variety.
Let $K(X)$ be its Grothendieck group of coherent sheaves.
For a coherent sheaf $\FF$ on a projective variety $X$,
define $\chi(\FF) \colonequals \sum_{i \ge 0} (-1)^i \dim \HH^i(X,\FF)$;
this induces a homomorphism $\chi \colon K(X) \to \Z$
sending the class $\cl(\FF)$ of $\FF$ to $\chi(\FF)$.

\section{Group-theoretic lemmas}
\label{S:group-theoretic lemmas}

Given any prime $\ell$, 
let $\ell'\colonequals \ell$ if $\ell \ne 2$, 
and $\ell'\colonequals 4$ if $\ell=2$.

\begin{lemma}[cf.~\cite{Minkowski1887}*{\S1}]
\label{L:Minkowski}
Let $\ell$ be a prime.
Let $G$ be a group acting through a finite quotient on
 a finite-rank free $\Z$-module or $\Z_\ell$-module $\Lambda$.
If $G$ acts trivially on $\Lambda/\ell' \Lambda$,
then $G$ acts trivially on $\Lambda$.
\end{lemma}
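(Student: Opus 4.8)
The plan is to reduce to the case where $\Lambda$ is a $\Z_\ell$-module and $G$ is a finite group acting faithfully, and then show that an element $g \in G$ acting trivially on $\Lambda/\ell'\Lambda$ must be the identity by a contradiction argument using the order of $g$. First I would dispose of the $\Z$-module case: if $\Lambda$ is a free $\Z$-module of finite rank on which $G$ acts through a finite quotient, then tensoring with $\Z_\ell$ gives a free $\Z_\ell$-module $\Lambda \tensor \Z_\ell$ with $\Lambda/\ell'\Lambda \isom (\Lambda\tensor\Z_\ell)/\ell'(\Lambda\tensor\Z_\ell)$, and since $\Lambda \injects \Lambda \tensor \Z_\ell$ the $\Z_\ell$-case implies the $\Z$-case. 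Likewise I may replace $G$ by its image in $\Aut(\Lambda)$, which is finite by hypothesis, so assume $G$ is finite and acts faithfully.

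Now suppose for contradiction that some $g \in G$ acts trivially on $\Lambda/\ell'\Lambda$ but $g \ne 1$. Replacing $g$ by a suitable power, I may assume $g$ has order a prime $q$. Write the action of $g$ on $\Lambda$ as $I + \ell' N$ for some $\Z_\ell$-linear endomorphism $N$ of $\Lambda$ (possible exactly because $g \equiv I$ modulo $\ell'$), with $N \ne 0$. The key step is to expand $g^q = I$ using the binomial theorem: $(I + \ell' N)^q = I + q\,\ell' N + \binom{q}{2}(\ell')^2 N^2 + \cdots = I$, hence
\[
q\,\ell'\, N = -\sum_{j \ge 2} \binom{q}{j} (\ell')^j N^j .
\]
Let $m$ be the largest integer such that $N \in \ell^m \M_r(\Z_\ell)$ after choosing a $\Z_\ell$-basis of $\Lambda$ (finite since $N \ne 0$), so $N = \ell^m N_0$ with $N_0 \not\equiv 0 \pmod \ell$. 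The main case is $q = \ell$: then every term on the right-hand side is divisible by $(\ell')^{2} \cdot \ell^{2m}$ except possibly the $j = \ell$ term $(\ell')^\ell N^\ell$, and one checks using $\ord_\ell(\ell') \in \{1,2\}$ that the right side lies in $\ell^{2m + \ord_\ell(\ell') + 1}\M_r(\Z_\ell)$, whereas the left side $\ell \cdot \ell' \cdot \ell^m N_0$ has $\ell$-adic matrix valuation exactly $m + \ord_\ell(\ell') + 1$; comparing valuations forces $m + \ord_\ell(\ell') + 1 \ge 2m + \ord_\ell(\ell') + 1$, i.e. $m \ge m+\nobreak$something impossible unless $N_0 \equiv 0$, a contradiction. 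The case $q \ne \ell$ is easier: then $q$ is a unit in $\Z_\ell$, the left side has valuation exactly $m + \ord_\ell(\ell')$, while each term on the right has valuation at least $2m + 2\ord_\ell(\ell')$, again a contradiction. Since $g$ was arbitrary of prime order, $G$ is trivial, which is what we wanted.

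The delicate point — and the reason $\ell'$ must be $4$ rather than $2$ when $\ell = 2$ — is precisely the valuation bookkeeping in the $q = \ell$ case: with $\ell' = \ell = 2$ the term $(\ell')^\ell N^\ell = 4 N^2$ has the same valuation contribution as $q\ell' N = 4N$ when $N$ itself is a unit times $2$, so the naive comparison fails; taking $\ell' = 4$ creates the needed gap. I expect this numerical estimate, carried out carefully with the binomial coefficients and keeping track of the worst term, to be the main obstacle; everything else is formal. An alternative, cleaner route to the same estimate is to note that $I + \ell' N$ has finite order $q$ in $\GL_r(\Z_\ell)$, so all its eigenvalues (in $\Qbar_\ell$) are roots of unity congruent to $1$ modulo the maximal ideal; for $\ell$ odd a primitive $\ell^k$-th root of unity $\zeta$ satisfies $\ord_\ell(\zeta - 1) = 1/(\ell^{k-1}(\ell-1)) < 1$, so $\zeta \not\equiv 1 \pmod \ell$ unless $\zeta = 1$, and for $\ell = 2$ one has $\ord_2(\zeta-1) = 1/2^{k-1} \le 1 < 2$ for $\zeta \ne 1$, so $\zeta \not\equiv 1 \pmod 4$; either way all eigenvalues of $g$ are $1$, and since $g$ is of finite order hence semisimple over $\Qbar_\ell$, $g = I$, contradicting faithfulness. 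I would present whichever of these two arguments is shorter in context.
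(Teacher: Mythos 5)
Your first argument is correct in essence and rests on the same binomial-theorem-plus-valuation idea as the paper, but the paper packages it more cleanly: rather than reducing to an element $g$ of prime order and splitting into cases $q=\ell$ versus $q\ne\ell$, the paper introduces the filtration $U_r=1+\ell^r M_n(\Z_\ell)$ and shows in one stroke that if $1+A\in U_r\setminus U_{r+1}$ (for $r\ge s$) then $(1+A)^\ell\in U_{r+1}\setminus U_{r+2}$, whence any non-identity element of $U_s$ has infinitely many distinct powers. That avoids the case split and the auxiliary integer $m$ entirely, since the hypothesis $r\ge s$ is exactly what is propagated. Your valuation bookkeeping in the $q=\ell$ case is morally right but the stated bound is slightly off: the correct statement is that the right-hand side lies in $\ell^{m+s+2}M_n(\Z_\ell)$ (not $\ell^{2m+s+1}$, which fails to give a contradiction when $m=0$), and this follows from $(\ell-1)(s+m)\ge 2$, which is precisely where $s\ge 2$ is needed when $\ell=2$ — so the estimate is recoverable and your diagnosis of the role of $\ell'=4$ is accurate. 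Your alternative eigenvalue argument (roots of unity congruent to $1$ mod $\ell'$ must equal $1$, hence a finite-order $g\equiv I$ is semisimple with all eigenvalues $1$, hence $g=I$) is a genuinely different route not taken by the paper; it is clean but requires also dispatching eigenvalues of order prime to $\ell$ via distinctness of Teichm\"uller lifts, a detail you leave implicit.
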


\begin{proof}
Let $n \colonequals \rk \Lambda$.
Write $\ell' \equalscolon \ell^s$.
For $r \ge s$, let $U_r \colonequals 1 + \ell^r M_n(\Z_\ell)$.
It suffices to show that there are no non-identity elements 
of finite order in the kernel $U_s$ of
$\GL_n(\Z_\ell) \to \GL_n(\Z_\ell/\ell' \Z_\ell)$.
In fact, for $r \ge s$ the binomial theorem shows that
$1+A \in U_r \setminus U_{r+1}$ implies $(1+A)^\ell \in U_{r+1} \setminus U_{r+2}$,
so by induction any non-identity $1+A \in U_s$ 
has infinitely many distinct powers,
and cannot be of finite order.
\end{proof}

\begin{lemma}
\label{L:cohomology}
Let a topological group $G$ act continuously on 
a finite-rank free $\Z_\ell$-module $\Lambda$.
Let $r\colonequals \rk \Lambda^G$.
Then the following hold.
\begin{enumerate}[\upshape (a)]
\item\label{I:H1 torsion is finite} 
The continuous cohomology group $\HH^1(G,\Lambda)[\ell^\infty]$ is finite.
\item\label{I:growth rate} $\#(\Lambda/\ell^n \Lambda)^G = O(\ell^{rn})$ as $n \to \infty$.
\end{enumerate}
\end{lemma}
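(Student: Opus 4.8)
The plan is to reduce everything to the finite-quotient case using the fact that $G$ acts through $\Aut(\Lambda/\ell'\Lambda)$ on the open subgroup where it acts trivially mod $\ell'$. Let $N \trianglelefteq G$ be the kernel of the action on $\Lambda/\ell'\Lambda$; then $G/N$ is finite (it embeds in $\GL_n(\Z/\ell'\Z)$), and by Lemma~\ref{L:Minkowski} applied to the quotient group through which $N$ acts, $N$ acts trivially on $\Lambda$ itself. Thus $\Lambda$ is a $G/N$-module with trivial $N$-action, and $G$ acts through the \emph{finite} group $\Gamma \colonequals G/N$. So both assertions are really statements about a finite group $\Gamma$ acting on a finite-rank free $\Z_\ell$-module.

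For part~\eqref{I:H1 torsion is finite}, once $G$ acts through the finite group $\Gamma$, the continuous cohomology $\HH^1(G,\Lambda) = \HH^1(\Gamma,\Lambda)$ (ordinary group cohomology of a finite group). It is annihilated by $\#\Gamma$, and since it is computed from the complex of $\Z_\ell$-modules of continuous cochains $\Gamma^i \to \Lambda$, which are finitely generated over $\Z_\ell$, the group $\HH^1(\Gamma,\Lambda)$ is a finitely generated $\Z_\ell$-module killed by $\#\Gamma$; hence it is finite. In particular $\HH^1(G,\Lambda)[\ell^\infty] = \HH^1(G,\Lambda)$ is finite. (If one prefers not to invoke Minkowski here, one can argue directly: $\HH^1(G,\Lambda)$ injects into $\HH^1(G,\Lambda\tensor\Q_\ell)\directsum \HH^1(G,\Lambda/\text{(torsion-free quotient issues)})$, but the route through the finite quotient is cleanest.)

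For part~\eqref{I:growth rate}, write $\Lambda^{\Gamma}$ for the invariants; it is a direct summand of $\Lambda$ as a $\Z_\ell$-module (being saturated: if $\ell^j v \in \Lambda^\Gamma$ then $\gamma v - v$ is $\ell$-torsion in $\Lambda$, hence $0$). Choose a $\Z_\ell$-basis adapted to the decomposition $\Lambda = \Lambda^\Gamma \directsum \Lambda'$ where $\Lambda'$ is a complementary $\Z_\ell$-submodule (not necessarily $\Gamma$-stable). From the exact sequence $0 \to \Lambda \xrightarrow{\ell^n} \Lambda \to \Lambda/\ell^n\Lambda \to 0$ of $\Gamma$-modules we get
\begin{equation*}
0 \to \Lambda^\Gamma/\ell^n\Lambda^\Gamma \to (\Lambda/\ell^n\Lambda)^\Gamma \to \HH^1(\Gamma,\Lambda)[\ell^n] \to 0,
\end{equation*}
so $\#(\Lambda/\ell^n\Lambda)^\Gamma = \ell^{rn}\cdot \#\HH^1(\Gamma,\Lambda)[\ell^n] \le \ell^{rn}\cdot\#\HH^1(\Gamma,\Lambda)$, and the last factor is a constant by part~\eqref{I:H1 torsion is finite}. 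Since $(\Lambda/\ell^n\Lambda)^G = (\Lambda/\ell^n\Lambda)^\Gamma$ (the $N$-action being trivial), this gives $\#(\Lambda/\ell^n\Lambda)^G = O(\ell^{rn})$, as desired.

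The only step requiring genuine care is the reduction to a finite quotient, i.e.\ correctly invoking Lemma~\ref{L:Minkowski}: one must check that $N$ acts on $\Lambda$ through a finite quotient before concluding the action is trivial. This holds because the image of $G$ in $\GL(\Lambda)$ is a compact $\ell$-adic Lie group whose image in $\GL_n(\Z/\ell'\Z)$ is finite, so $N$ has image contained in the pro-$\ell$ group $U_s = \ker(\GL_n(\Z_\ell)\to\GL_n(\Z/\ell'\Z))$; but we additionally need that this image is finite to apply the lemma as stated. In fact $\HH^1$-finiteness in part~(a) does \emph{not} strictly need triviality of $N$ — it only needs $G$ to act through an \emph{open} subgroup matter, handled by continuity of cochains — so an alternative is to prove (a) directly from continuity (continuous cochains $G\to\Lambda$ factor through a finite quotient because $\Lambda/\ell^m\Lambda$ is discrete and $G$ is compact) and then run the argument of (b) using only that $(\Lambda/\ell^n\Lambda)^G \supseteq \Lambda^G/\ell^n\Lambda^G$ with cokernel inside $\HH^1(G,\Lambda)[\ell^n]$. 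I expect the finite-quotient reduction, done carefully, to be the main (mild) obstacle; everything after it is the standard long exact sequence bookkeeping above.
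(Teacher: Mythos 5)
Your central reduction --- ``by Lemma~\ref{L:Minkowski} applied to the quotient group through which $N$ acts, $N$ acts trivially on $\Lambda$'' --- has a genuine gap, which you yourself notice in the last paragraph but do not close. Lemma~\ref{L:Minkowski} requires that the group act on $\Lambda$ \emph{through a finite quotient}, and nothing in the hypotheses of Lemma~\ref{L:cohomology} guarantees this for $N$. Indeed it is false in general: take $G = 1 + \ell'\Z_\ell \subseteq \Z_\ell^\times$ acting on $\Lambda = \Z_\ell$ by multiplication. Then $G$ acts trivially on $\Lambda/\ell'\Lambda$, so $N = G$, but $N$ acts faithfully through an infinite quotient and certainly not trivially on $\Lambda$. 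So the conclusion that $G$ acts through a finite $\Gamma$ is simply wrong, and everything built on it --- including the assertion $\HH^1(G,\Lambda) = \HH^1(\Gamma,\Lambda)$ and the annihilation by $\#\Gamma$ --- collapses. (In the paper, Lemma~\ref{L:Minkowski} is applied in Lemma~\ref{L:size of W_n^G} to the module $\widetilde M$, where the finite-orbit hypothesis is part of the definition of Tate classes; that justification is absent here.)

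Your fallback sketch (``prove (a) directly from continuity'') also does not reach the key point. Continuous cochains $G \to \Lambda$ need \emph{not} factor through a finite quotient (only cochains into the discrete modules $\Lambda/\ell^m\Lambda$ do), and $\HH^1(G,\Lambda)$ can genuinely be infinite for a continuous action of a compact group. The nontrivial content of part~(a) is precisely ruling out infinitely much $\ell$-power torsion in $\HH^1(G,\Lambda)$. The paper does this by first observing from the exact sequence (with $n=1$) that $\HH^1(G,\Lambda)[\ell]$ is finite --- which is valid for any continuous action, since $(\Lambda/\ell\Lambda)^G$ is finite --- and then arguing that an infinite $\ell$-primary group with finite $\ell$-torsion must contain a copy of $\Q_\ell/\Z_\ell$, which is excluded by Tate's Proposition~2.1 in \cite{Tate1976}. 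That external input is what your argument is missing; without it, neither your main route nor your fallback establishes~(a). Part~(b), by contrast, is fine once (a) is in hand: the bookkeeping with the exact sequence
\[
0 \to \Lambda^G/\ell^n\Lambda^G \to (\Lambda/\ell^n\Lambda)^G \to \HH^1(G,\Lambda)[\ell^n] \to 0
\]
works verbatim for a general continuous $G$-action (no reduction to a finite quotient needed), and the term on the left has size exactly $\ell^{rn}$ since $\Lambda^G$ is free of rank~$r$.
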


\begin{proof}
For each $n$, taking continuous group cohomology of 
$0 \to \Lambda \stackrel{\ell^n}\to \Lambda \to \Lambda/\ell^n \Lambda \to 0$
yields
\begin{equation}
  \label{E:short exact sequence}
	0 \to \frac{\Lambda^G}{\ell^n(\Lambda^G)} 
	\to \left( \frac{\Lambda}{\ell^n \Lambda} \right)^G  
	\to \HH^1(G,\Lambda)[\ell^n]
	\to 0.
\end{equation}
\begin{enumerate}[\upshape (a)]
\item 
By~\eqref{E:short exact sequence} for $n=1$, 
the group $\HH^1(G,\Lambda)[\ell]$ is finite.
So if $\HH^1(G,\Lambda)[\ell^\infty]$ is infinite,
it contains a copy of $\Q_\ell/\Z_\ell$,
contradicting the $Y=0$ case of \cite{Tate1976}*{Proposition~2.1}.
\item 
In~\eqref{E:short exact sequence},
the group on the left has size $\ell^{rn}$,
and the group on the right has size $O(1)$
as $n \to \infty$, by~\eqref{I:H1 torsion is finite}.
\qedhere
\end{enumerate}
\end{proof}

\section{Upper bound on the rank of the group of Tate classes}
\label{S:stuff}

\begin{setup}
\label{Setup}
Let $k$ be a finitely generated field. 
Let $G\colonequals G_k$.
Let $X$ be a nice variety over $k$.
Let $d\colonequals \dim X$.
Fix $p \in \{0,1,\ldots,d\}$.
For each $m \in \Z_{>0}$ with $\kappa \nmid m$, 
define $T_m \colonequals \HH^{2p}(\Xsep,(\Z/m\Z)(p))$.
Fix a prime $\ell \ne \kappa$.
Define $T \colonequals \HH^{2p}(\Xsep,\Z_\ell(p))$,
and $V \colonequals \HH^{2p}(\Xsep,\Q_\ell(p))$.
\end{setup}

An element of $V$ is called a \defi{Tate class}
if it is fixed by a (finite-index) open subgroup of $G$.
Let $V^{\Tate} \le V$ be the $\Q_\ell$-subspace of Tate classes.
Let $M$ be the $\Z_\ell$-submodule of 
elements of $T$ mapping to Tate classes in $V$.
Let $r \colonequals \rk M = \dim V^{\Tate}$.

\begin{lemma}
\label{L:Kummer}
For each $i,n \in \Z_{\ge 0}$, there is an exact sequence
\[
	0 \to \frac{ \HH^i(\Xsep,\Z_\ell(p)) }{ \ell^n \HH^i(\Xsep,\Z_\ell(p)) }
	\to \HH^i(\Xsep,(\Z/\ell^n \Z)(p))
	\to \HH^{i+1}(\Xsep,\Z_\ell(p))[\ell^n]
	\to 0.
\]
\end{lemma}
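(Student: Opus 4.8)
The plan is to realize the sequence as the inverse limit over $m$ of the cohomology long exact sequences attached to a natural inverse system of short exact sequences of \'etale sheaves on $\Xsep$. For each $m\ge 0$, twisting the exact sequence of constant sheaves $0\to\Z/\ell^m\Z\xrightarrow{\cdot\ell^n}\Z/\ell^{m+n}\Z\to\Z/\ell^n\Z\to 0$ by $(p)$ (an exact operation) yields
\[
0 \to (\Z/\ell^m\Z)(p) \xrightarrow{\;\cdot\ell^n\;} (\Z/\ell^{m+n}\Z)(p) \xrightarrow{\;\mathrm{red}\;} (\Z/\ell^n\Z)(p) \to 0 ,
\]
and as $m$ varies these form an inverse system under the reduction maps: the transition maps on the left-hand term are the standard ones defining $\Z_\ell(p)$, and a check of the right-hand square shows that those induced on the quotient $(\Z/\ell^n\Z)(p)$ are the identity. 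Writing $\HH^j_m \colonequals \HH^j(\Xsep,(\Z/\ell^m\Z)(p))$, I would take \'etale cohomology and extract the relevant segment of each long exact sequence, obtaining, functorially in $m$,
\[
0 \to \coker\bigl(\HH^i_m \xrightarrow{\cdot\ell^n} \HH^i_{m+n}\bigr) \to \HH^i(\Xsep,(\Z/\ell^n\Z)(p)) \to \ker\bigl(\HH^{i+1}_m \xrightarrow{\cdot\ell^n} \HH^{i+1}_{m+n}\bigr) \to 0 ,
\]
a short exact sequence whose middle term does not depend on $m$.

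Next I would pass to $\varprojlim_m$. Each $\HH^j_m$ is finite, hence so is every sub- and quotient group appearing above; any inverse system of finite abelian groups satisfies the Mittag--Leffler condition, so all the relevant $\varprojlim^1$ vanish and $\varprojlim_m$ is exact on the three systems in the displayed sequence. Since the middle term was constant, this produces a short exact sequence with middle term $\HH^i(\Xsep,(\Z/\ell^n\Z)(p))$ and outer terms $\varprojlim_m\coker(\HH^i_m\xrightarrow{\cdot\ell^n}\HH^i_{m+n})$ and $\varprojlim_m\ker(\HH^{i+1}_m\xrightarrow{\cdot\ell^n}\HH^{i+1}_{m+n})$.

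Finally I would identify these two limits, which is the one place needing care. By definition $\varprojlim_m\HH^j_m = \HH^j(\Xsep,\Z_\ell(p))$, and by cofinality of $\{m+n\}_m$ in $\{N\}_N$ the same holds for $\varprojlim_m\HH^j_{m+n}$. The composite sheaf maps $(\Z/\ell^{m+n}\Z)(p)\xrightarrow{\mathrm{red}}(\Z/\ell^m\Z)(p)\xrightarrow{\cdot\ell^n}(\Z/\ell^{m+n}\Z)(p)$ and $(\Z/\ell^m\Z)(p)\xrightarrow{\cdot\ell^n}(\Z/\ell^{m+n}\Z)(p)\xrightarrow{\mathrm{red}}(\Z/\ell^m\Z)(p)$ are both multiplication by $\ell^n$, so on cohomology and then in the limit the morphism of inverse systems $\cdot\ell^n\colon(\HH^j_m)_m\to(\HH^j_{m+n})_m$ induces precisely multiplication by $\ell^n$ on $\HH^j(\Xsep,\Z_\ell(p))$. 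Feeding this into the exactness of $\varprojlim_m$ applied to $0\to\ker\to\HH^j_m\to\im\to 0$ and $0\to\im\to\HH^j_{m+n}\to\coker\to 0$ gives $\varprojlim_m\im = \ell^n\HH^j(\Xsep,\Z_\ell(p))$, hence the cokernel limit is $\HH^i(\Xsep,\Z_\ell(p))/\ell^n\HH^i(\Xsep,\Z_\ell(p))$; and since $\ker$ is left exact it commutes with $\varprojlim_m$, so the kernel limit is $\HH^{i+1}(\Xsep,\Z_\ell(p))[\ell^n]$. This is the asserted sequence. The only real obstacle is bookkeeping: choosing the maps in the inverse system of sheaf sequences so the middle cohomology stays constant, and pinning down the limit of the connecting maps as multiplication by $\ell^n$; the vanishing of $\varprojlim^1$ comes for free from finiteness, and no deeper input is needed.
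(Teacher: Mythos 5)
Your argument is correct and is essentially the same as the paper's: the paper simply cites Milne, \emph{\'Etale Cohomology}, Lemma~V.1.11, applied to the exact sequence of $\ell$-adic sheaves $0 \to \Z_\ell(p) \xrightarrow{\ell^n} \Z_\ell(p) \to (\Z/\ell^n\Z)(p) \to 0$, which is exactly the inverse system $0 \to (\Z/\ell^m\Z)(p) \xrightarrow{\ell^n} (\Z/\ell^{m+n}\Z)(p) \to (\Z/\ell^n\Z)(p) \to 0$ you wrote down. You have unwound the black box (finiteness of the groups $\HH^j_m$, Mittag--Leffler vanishing of $\varprojlim^1$, identification of the limiting map with multiplication by $\ell^n$) rather than citing it, but the mathematics is the same.
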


\begin{proof}
Use \cite{MilneEtaleCohomology1980}*{Lemma~V.1.11} 
to take cohomology of 
\[
	0 \to \Z_\ell(p) \stackrel{\ell^n}\to \Z_\ell(p) 
		\to (\Z/\ell^n \Z)(p) \to 0.\qedhere
\]
\end{proof}

\begin{corollary}
\label{C:Kummer}
For each $n \ge 0$, there is an exact sequence
\[
	0 \to \frac{T}{\ell^n {T}} 
	\to T_{\ell^n}
	\to \HH^{2p+1}(\Xsep,\Z_\ell(p))[\ell^n]
	\to 0.
\]
\end{corollary}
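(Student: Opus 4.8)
The plan is simply to specialize Lemma~\ref{L:Kummer}. Setting $i = 2p$ there produces the exact sequence
\[
	0 \to \frac{ \HH^{2p}(\Xsep,\Z_\ell(p)) }{ \ell^n \HH^{2p}(\Xsep,\Z_\ell(p)) }
	\to \HH^{2p}(\Xsep,(\Z/\ell^n \Z)(p))
	\to \HH^{2p+1}(\Xsep,\Z_\ell(p))[\ell^n]
	\to 0,
\]
and I would then rewrite the first two terms using the abbreviations from Setup~\ref{Setup}: by definition $T = \HH^{2p}(\Xsep,\Z_\ell(p))$, so the leftmost term is $T/\ell^n T$; and $T_{\ell^n} = \HH^{2p}(\Xsep,(\Z/\ell^n\Z)(p))$ is exactly the middle term (this is the $m = \ell^n$ instance of the definition $T_m = \HH^{2p}(\Xsep,(\Z/m\Z)(p))$, which is legitimate because $\kappa \nmid \ell^n$, i.e.\ $\ell \ne \kappa$, is assumed in the Setup). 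The third term already appears in the desired form, so nothing is changed there. This gives precisely the claimed sequence.

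There is no genuine obstacle here: the corollary is a pure substitution of notation into the preceding lemma, the only points to record being that the hypotheses of Lemma~\ref{L:Kummer} hold (namely $i = 2p \ge 0$ and $n \ge 0$) and that the standing assumption $\ell \ne \kappa$ is in force so that the mod-$\ell^n$ coefficient groups are defined. Accordingly I would give the proof in one line: take $i = 2p$ in Lemma~\ref{L:Kummer} and unwind the definitions of $T$ and $T_{\ell^n}$.
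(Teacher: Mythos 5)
Your proof is correct and is exactly the paper's argument: the paper's entire proof of Corollary~\ref{C:Kummer} is the one-liner ``Take $i=2p$ in Lemma~\ref{L:Kummer},'' and you have simply spelled out the same notational substitution. Nothing to add.
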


\begin{proof}
Take $i=2p$ in Lemma~\ref{L:Kummer}.
\end{proof}

\begin{corollary}
\label{C:saturated Kummer}
For each $n \ge 0$, there is a canonical injection 
$M/\ell^n M \injects T_{\ell^n}$.
\end{corollary}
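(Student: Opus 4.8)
The plan is to build the map as the composite of the natural reduction map $M/\ell^n M \to T/\ell^n T$ with the canonical injection $T/\ell^n T \injects T_{\ell^n}$ coming from the exact sequence of Corollary~\ref{C:Kummer}. With this factorization in hand, the only thing left to prove is that the first map, induced by the inclusion $M \injects T$, is itself injective; equivalently, that $M \cap \ell^n T = \ell^n M$ inside $T$, since the kernel of $M/\ell^n M \to T/\ell^n T$ is $(M \cap \ell^n T)/\ell^n M$.

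The key observation is that $M$ is \emph{saturated} in $T$. Indeed, by its definition $M$ is the preimage under the natural map $T \to V = T \tensor_{\Z_\ell} \Q_\ell$ of the $\Q_\ell$-subspace $V^{\Tate} \le V$; equivalently, $M$ is the kernel of the composite $T \to V \to V/V^{\Tate}$. Hence $T/M$ embeds into the $\Q_\ell$-vector space $V/V^{\Tate}$, and in particular $T/M$ is torsion-free. From this, $M \cap \ell^n T = \ell^n M$ follows: the inclusion $\ell^n M \subseteq M \cap \ell^n T$ is obvious, and conversely, if $t = \ell^n s$ with $s \in T$ and $t \in M$, then the class of $s$ in $T/M$ is killed by $\ell^n$, hence is zero, so $s \in M$ and $t \in \ell^n M$.

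Combining these, $M/\ell^n M \injects T/\ell^n T \injects T_{\ell^n}$, and the composite is the desired canonical injection $M/\ell^n M \injects T_{\ell^n}$; canonicity is immediate because every arrow involved is canonical. I do not expect any real obstacle here: the single point that needs to be noticed is that $M$, being the preimage of a sub-vector-space of $V$, is saturated in $T$, which is precisely what makes reduction modulo $\ell^n$ preserve the injectivity of $M \injects T$.
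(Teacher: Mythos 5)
Your proof is correct and follows the same route as the paper: observe that $M$ is saturated in $T$, deduce that $M/\ell^n M \injects T/\ell^n T$, and compose with the first map from Corollary~\ref{C:Kummer}. You merely spell out the saturation point (that $M$ is the preimage of the $\Q_\ell$-subspace $V^{\Tate}$, so $T/M$ is torsion-free), which the paper takes for granted.
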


\begin{proof}
Since $M$ is saturated in $T$,
we have an injection $M/\ell^n M \injects T/\ell^n T$.
Compose with the first map in Corollary~\ref{C:Kummer}.
\end{proof}

\begin{lemma}
\label{L:size of W_n^G}
Let $t \in \Z_{\ge 0}$ be such that $\ell^t T_\tors =0$.
Assume that $G$ acts trivially on $T_{\ell'}$.
\begin{enumerate}[\upshape (a)]
\item\label{I:size 1} 
For any $n \geq t$,
we have $\# T_{\ell^n}^G \ge \ell^{r(n-t)}$.
\item\label{I:size 2} 
We have $\# T_{\ell^n}^G = O(\ell^{rn})$ 
as $n \to \infty$.
\item\label{I:rank min} We have 
\[
	r = \min \left\{ \left\lfloor \frac{\log \# T_{\ell^n}^G}{\log \ell^{n-t}} \right\rfloor : n > t \right\}.
\]
\end{enumerate}
\end{lemma}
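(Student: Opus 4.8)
The plan is to prove the asymptotic lower bound~\eqref{I:size 1} and the asymptotic upper bound~\eqref{I:size 2} by transporting information between the finite groups $T_{\ell^n}$ and the finitely generated $\Z_\ell$-module $T$ (with its saturated submodule $M$) through the Kummer-type sequences of Section~\ref{S:stuff}, and then to deduce the formula~\eqref{I:rank min} as a purely numerical consequence of the resulting two-sided bound on $\#T_{\ell^n}^G$.

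For~\eqref{I:size 1}, the first step is to propagate the triviality hypothesis from $T_{\ell'}$ to the lattice of Tate classes. Corollary~\ref{C:Kummer}, taken with $\ell^n=\ell'$, exhibits $T/\ell'T$ as a $G$-submodule of $T_{\ell'}$, so $G$ acts trivially on $T/\ell'T$; since $M$ is the preimage in $T$ of the $\Q_\ell$-subspace $V^{\Tate}\subseteq V$, it is saturated, so reduction mod $\ell'$ yields a $G$-equivariant injection $M/\ell'M\injects T/\ell'T$, and hence a trivial $G$-action on $M/\ell'M$ and on its quotient $\widetilde{M}/\ell'\widetilde{M}$. Now $\widetilde{M}$ is free of rank $r$ and $\widetilde{M}\tensor\Q_\ell=V^{\Tate}$; moreover $G$ acts on $V^{\Tate}$ through a finite quotient, since the common stabilizer of a basis of Tate classes is an open, hence finite-index, subgroup of the profinite group $G$. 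Thus $G$ acts on $\widetilde{M}$ through a finite quotient, so Lemma~\ref{L:Minkowski} forces $G$ to act trivially on all of $\widetilde{M}$, i.e.\ $gm-m\in T_\tors$ for every $m\in M$ and $g\in G$. Since $\ell^tT_\tors=0$ this gives $g(\ell^tm)=\ell^tm$, so $G$ acts trivially on $\ell^tM$, which is also torsion-free --- a torsion element of it has the form $\ell^ty$ with $y\in M_\tors=T_\tors$, hence is zero --- and therefore free of rank $r$. For $n\ge t$ the image of $\ell^tM$ in $M/\ell^nM$ is $\ell^tM/\ell^nM\isom(\Z/\ell^{n-t}\Z)^r$, a $G$-fixed subgroup of order $\ell^{r(n-t)}$; feeding it through the canonical $G$-equivariant injection $M/\ell^nM\injects T_{\ell^n}$ of Corollary~\ref{C:saturated Kummer} produces $\ell^{r(n-t)}$ distinct elements of $T_{\ell^n}^G$.

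For~\eqref{I:size 2}, I would take $G$-invariants in two short exact sequences. From Corollary~\ref{C:Kummer} one obtains $\#T_{\ell^n}^G\le\#(T/\ell^nT)^G\cdot\#\HH^{2p+1}(\Xsep,\Z_\ell(p))[\ell^n]$, and the last factor stays bounded as $n\to\infty$ because $\HH^{2p+1}(\Xsep,\Z_\ell(p))$ is finitely generated over $\Z_\ell$. Tensoring $0\to T_\tors\to T\to\widetilde{T}\to0$ with $\Z/\ell^n\Z$ --- which preserves exactness since $\widetilde{T}$ is free --- and taking invariants gives $\#(T/\ell^nT)^G\le\#T_\tors\cdot\#(\widetilde{T}/\ell^n\widetilde{T})^G$. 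Finally $\widetilde{T}^G\tensor\Q_\ell$ embeds into $V^G\le V^{\Tate}$, so $\rk\widetilde{T}^G\le r$, and Lemma~\ref{L:cohomology}\eqref{I:growth rate} applied to $\Lambda=\widetilde{T}$ yields $\#(\widetilde{T}/\ell^n\widetilde{T})^G=O(\ell^{rn})$; multiplying the three estimates gives $\#T_{\ell^n}^G=O(\ell^{rn})$.

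Part~\eqref{I:rank min} then follows formally: by~\eqref{I:size 1}, for every $n>t$ the term $\lfloor\log\#T_{\ell^n}^G/\log\ell^{n-t}\rfloor$ is at least $r$, while by~\eqref{I:size 2} there is a constant $C$ with $\log\#T_{\ell^n}^G\le rn\log\ell+C$ for all large $n$, so $\log\#T_{\ell^n}^G/\log\ell^{n-t}\le rn/(n-t)+C/((n-t)\log\ell)\to r$ and the term equals $r$ once $n$ is large; hence the minimum is attained and equals $r$. I expect~\eqref{I:size 1} to be the main obstacle: one must keep careful track of the torsion of $T$ --- this is what forces the shift by $t$ and the passage from $M$ to $\ell^tM$ --- and one must check that $G$ acts through a \emph{finite} quotient on the Tate lattice before Lemma~\ref{L:Minkowski} can be applied. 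Part~\eqref{I:size 2} is a routine diagram chase on top of Lemma~\ref{L:cohomology}\eqref{I:growth rate}, and~\eqref{I:rank min} is elementary.
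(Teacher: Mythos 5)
Your proposal is correct and follows essentially the same route as the paper: use Corollary~\ref{C:saturated Kummer} together with Lemma~\ref{L:Minkowski} to show that $G$ acts trivially on $\widetilde{M}$, pass to the torsion-free module $\ell^t M$ for the lower bound, and combine Corollary~\ref{C:Kummer} with Lemma~\ref{L:cohomology}\eqref{I:growth rate} for the upper bound. The only cosmetic differences are that in~\eqref{I:size 2} you quotient out the torsion by tensoring $0\to T_\tors\to T\to\widetilde{T}\to 0$ with $\Z/\ell^n\Z$ rather than dividing the sequence of Corollary~\ref{C:Kummer} by the image $I_n$ of $T_\tors$, and you establish only $\rk\widetilde{T}^G\le r$ where the paper notes the equality $\widetilde{T}^G\subseteq\widetilde{M}=\widetilde{M}^G\subseteq\widetilde{T}^G$; neither difference changes the substance.
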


\begin{proof}
By Corollary~\ref{C:saturated Kummer}, $G$ acts trivially on $M/\ell'M$,
and hence also on $M/\ell M$ and $\widetilde{M}/\ell' \widetilde{M}$.
The $G$-orbit of each element of $\widetilde{M}$ is finite
by definition of Tate class,
and $\widetilde{M}$ is finitely generated as a $\Z_\ell$-module,
so $G$ acts through a finite quotient on $\widetilde{M}$.
By Lemma~\ref{L:Minkowski}, $G$ acts trivially on $\widetilde{M}$.
\begin{enumerate}[\upshape (a)]
\item 
Multiplication by $\ell^t$ on $M$ kills $M_{\tors}$,
so it factors as $M \to \widetilde{M} \twoheadrightarrow \ell^t M$. 
Hence $G$ acts trivially on $\ell^t M$, so for $n\geq t$, 
the quotient $\ell^t M/\ell^n M$ is contained in $(M/\ell^n M)^G$. 
By Corollary~\ref{C:saturated Kummer}, we deduce the inequality
$\# T_{\ell^n}^G \ge \# (M/\ell^n M)^G \ge \# (\ell^t M/\ell^n M) \ge \ell^{r(n-t)}$.
\item
By definition of $M$, 
we have $\widetilde{T}^G \subseteq \widetilde{M} = \widetilde{M}^G \subseteq \widetilde{T}^G$,
so $\rk \widetilde{T}^G = r$.
Dividing the first two terms in Corollary~\ref{C:Kummer} 
by the images of $T_{\tors}$ yields
\[
0 \to \frac{\widetilde{T}}{\ell^n \widetilde{T}}
	\to \frac{T_{\ell^n}}{I_n}
	\to \HH^{2p+1}(\Xsep,\Z_\ell(p))[\ell^n]
	\to 0,
\]
where $I_n$ is the image of 
$T_{\tors}$
in $T_{\ell^n}$.
This implies the second inequality in
\[
	\# T_{\ell^n}^G 
	\le \# I_n^G 
	    \cdot \# \left( \frac{T_{\ell^n}}{I_n} \right)^G 
	\le \# I_n^G 
	    \cdot \# \left(\frac{\widetilde{T}}{\ell^n \widetilde{T}}\right)^G
            \cdot \# \left(\HH^{2p+1}(\Xsep,\Z_\ell(p))[\ell^n] \right)^G.
\]
Since $\HH^i(\Xsep,\Z_\ell(p))$ is a finitely generated $\Z_\ell$-module
for each $i$,
the first and third factors on the right are $O(1)$.
On the other hand, 
Lemma~\ref{L:cohomology}\eqref{I:growth rate} yields 
$\#(\widetilde{T}/\ell^n \widetilde{T})^G = O(\ell^{rn})$.
Multiplying shows that $\# T_{\ell^n}^G = O(\ell^{rn})$.
\item The statement follows by combining the previous items.
\qedhere
\end{enumerate}
\end{proof}



\section{Cycles under field extension}\label{S:field extension}

In this section, assume Setup~\ref{Setup}.

\begin{proposition}
\label{P:Num X}
\hfill
\begin{enumerate}[\upshape (a)]
\item\label{I:Num X injects} 
For any extension $L$ of $k$,
the natural map $\Num^p X \to \Num^p X_L$ is injective.
\item\label{I:Num X has finite index} 
The image of $\Num^p X \to \Num^p \Xbar$ 
is a finite-index subgroup of $(\Num^p \Xbar)^G$.
\item\label{I:Num X_sep vs Num Xbar}
If $\kappa>0$, the index of $\Num^p \Xsep$ in $\Num^p \Xbar$ 
is finite and equal to a power of $\kappa$.
\end{enumerate}
The same three statements hold for $\NS$ instead of $\Num^p$.
\end{proposition}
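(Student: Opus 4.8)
The plan is to prove parts (a), (b), (c) in order, since (b) will use (a), and then to observe that the same arguments work for $\NS$ because $\NS X$ differs from a subgroup of $\Num^1 X$ only by a controlled amount (torsion plus finite-index issues), or more directly because the key inputs --- flatness of the base change on cycles up to torsion, Galois descent for cycles, and separable-closure versus algebraic-closure comparison --- are formal and apply verbatim.

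For part (a), I would argue as follows. Numerical equivalence is detected by intersection numbers against cycles of complementary dimension. A cycle $z \in \calZ^p(X)$ numerically trivial on $X_L$ means $z \cdot w = 0$ for all $w \in \calZ^{d-p}(X_L)$; I want this already over $k$. Pick a $k$-basis-up-to-torsion of $\Num^{d-p} X$ represented by cycles $w_1,\dots,w_s$ defined over $k$; if $z$ pairs to zero with all of these over $k$, it pairs to zero with everything over $k$, so $z$ is numerically trivial over $k$. The content is that the intersection pairing $\Num^p X \times \Num^{d-p}X \to \Z$ is nondegenerate (standard, e.g.\ by definition of $\Num$) and that $\Num^{d-p}X_L$ is \emph{spanned} over $\Q$ by classes descending from some finite extension, ultimately from $k$ after enlarging --- wait, that is not automatic. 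The clean route instead: show directly that $\Num^p X \to \Num^p X_L$ is injective by reducing to $L$ finitely generated over $k$, then to $L/k$ finite (a cycle and its numerical triviality involve finitely many equations, so descend the field), then to $L/k$ finite: if $L/k$ is finite of degree $n$, the composite $\calZ^p(X) \to \calZ^p(X_L) \xrightarrow{\text{pushforward}} \calZ^p(X)$ is multiplication by $n$ on cycles, hence on $\Num^p$, and multiplication by $n$ is injective on the torsion-free group $\Num^p X$; so $z \mapsto 0$ in $\Num^p X_L$ forces $nz = 0$ in $\Num^p X$, whence $z=0$. This transfer/norm argument is the cleanest.

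For part (b), set $L = \kbar$. Injectivity from (a) gives $\Num^p X \hookrightarrow (\Num^p \Xbar)^G$ (the image is $G$-invariant since it comes from the ground field). For finite index: the $G$-action on the finitely generated group $\Num^p \Xbar$ factors through a finite quotient (it permutes a finite spanning set of cycle classes, each with finite orbit since each cycle is defined over a finite extension --- this uses that $\Num^p\Xbar$ is finitely generated), so $(\Num^p\Xbar)^G$ is a finitely generated group of some rank $\rho'$; I must show $\rk(\Num^p X) = \rho'$. Given a $G$-invariant class $\alpha \in \Num^p\Xbar$, represent it by a cycle $Z$ over some finite Galois extension $k'/k$; the "trace" cycle $\sum_{\sigma \in \Gal(k'/k)} \sigma Z$ is defined over $k$ (by Galois descent of cycles, or rather of the associated $0$-cycles on the Hilbert scheme / by taking the cycle-theoretic pushforward under $X_{k'} \to X$), and its class in $\Num^p\Xbar$ equals $[\Gal(k'/k):\,] \cdot \alpha$ up to the subtlety that $\sigma$ acts on $\alpha$ trivially --- so it equals $n\alpha$ with $n = [k':k]$. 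Hence $n\alpha$ lies in the image of $\Num^p X$, proving the image has finite index in $(\Num^p\Xbar)^G$.

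For part (c), with $\kappa = p_0 > 0$ (writing $p_0$ for the characteristic to avoid collision), the extension $\kbar/\ksep$ is purely inseparable, so every element of $\kbar$ has $p_0$-power-th power in $\ksep$; thus any cycle on $\Xbar$ is defined over a finite purely inseparable extension $k''/\ksep$ of degree a power of $p_0$. Pushforward along $X_{k''} \to \Xsep$ followed by the fact that this composite with pullback is multiplication by $[k'':\ksep] = p_0^m$ shows that $p_0^m \cdot \Num^p\Xbar$ lands in the image of $\Num^p\Xsep \to \Num^p\Xbar$; combined with injectivity (which over $\ksep \subset \kbar$ holds even integrally, not just up to torsion, because $\Xbar = \Xsep \times_{\ksep}\kbar$ and the purely inseparable base change is a universal homeomorphism inducing isomorphisms on $\Num$ after inverting $p_0$ --- but here one gets injectivity from (a)), the index divides a power of $p_0$. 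That the index is exactly a power of $p_0$ (and in particular finite) is what this shows; finiteness alone also follows since both groups have the same rank by (a) applied with $L = \kbar$.

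The main obstacle I expect is the Galois/inseparable \emph{descent of algebraic cycles} used in (b) and (c): one must know that a cycle on $X_{k'}$ fixed (as a cycle, i.e.\ its support and multiplicities) by $\Gal(k'/k)$ descends to a cycle on $X$, and that cycle-theoretic pushforward along a finite extension interacts with the intersection pairing via the projection formula so that pushforward-then-pullback is multiplication by the degree. These are standard but need care over non-perfect fields (for the inseparable part in (c)); everything else --- injectivity, the norm trick, finite generation forcing a finite Galois quotient --- is formal. For the $\NS$ statements one repeats the same three steps, noting $\NS$ is finitely generated (quoted earlier in the excerpt) so the same finite-quotient-of-$G$ argument applies, and the norm/pushforward for divisors is classical.
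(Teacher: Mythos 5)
Your proof of (b) and (c) matches the paper's: trace (norm) down from a finite extension to see that the cokernel is torsion, then use finite generation of $\Num^p\Xbar$; for (c), observe that the degree of the extension is automatically a power of $\kappa$. These parts are fine.

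For (a) in the $\Num^p$ case, however, you talk yourself out of the correct argument. Injectivity of $\Num^p X \to \Num^p X_L$ asks: if $z \in \calZ^p(X)$ has intersection number $0$ against every $(d-p)$-cycle on $X_L$, does it have intersection number $0$ against every $(d-p)$-cycle on $X$? But every cycle on $X$ base-changes to a cycle on $X_L$ with the same intersection number, so the hypothesis trivially includes the conclusion; the set of test cycles only grows under extension. That is the paper's one-line proof, it works for an \emph{arbitrary} extension $L$, and it is precisely the argument you start with. The worry you raise --- ``that $\Num^{d-p} X_L$ is spanned over $\Q$ by classes descending from $k$'' --- would be needed to show the map is \emph{surjective}, not injective, so it is irrelevant here. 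Your fallback via the norm map $\pi_*\pi^*=n$ is correct as far as it goes (one must invoke the projection formula to see that $\pi_*$ preserves numerical triviality, and torsion-freeness of $\Num^p$), but it only handles $L/k$ finite and the ``descend to a finite subextension'' step you interpose is not justified as stated: numerical triviality over $L$ is not a finite amount of data.

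The more substantive gap is in the final ``the same for $\NS$'' sentence. For (b) and (c) the transfer argument does carry over verbatim. But (a) for $\NS$ is genuinely different from (a) for $\Num^p$: you must show that a line bundle on $X$ that becomes algebraically equivalent to $0$ over $L$ was already algebraically equivalent to $0$ over $k$, and this is not a tautology about test objects. The paper proves it by citing that the formation of $\PIC^0_{X/k}$ commutes with field extension (Kleiman, ``The Picard scheme'', Prop.\ 9.5.3). If you prefer the norm route, you would instead need to know that the norm map $\Pic X_L \to \Pic X$ carries $\Pic^0 X_L$ into $\Pic^0 X$ --- a nontrivial fact of essentially the same depth. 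Either way this input should be named; ``the norm/pushforward for divisors is classical'' does not isolate it.
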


\begin{proof}\hfill
  \begin{enumerate}[\upshape (a)]
  \item 
If $z \in \calZ^p(X)$ has intersection number $0$ with all 
$p$-cycles on $X_L$,
then in particular it has intersection number $0$ with all 
$p$-cycles on $X$.
\item 
Suppose that $[z] \in (\Num^p \Xbar)^G$, where $z \in \calZ^p(\Xbar)$.
Then $z$ comes from some $z_L \in \calZ^p(X_L)$ for some finite 
extension $L$ of $k$.
Let $n \colonequals [L:k]$.
Then $n[z] = \tr_{L/k} [z]$ comes from $\tr_{L/k} z_L \in \calZ^p(X)$.
Hence the cokernel of $\Num^p X \to (\Num^p \Xbar)^G$ is torsion,
but it is also finitely generated, so it is finite.
\item 
We may assume that $k=\ksep$.
Then $G=\{1\}$, so~\eqref{I:Num X has finite index}
implies that $\Num^p \Xsep$ is of finite index in $\Num^p \Xbar$.
Moreover, in the proof of~\eqref{I:Num X has finite index},
$[L:k]$ is always a power of $\kappa$,
so the index is a power of $\kappa$.
  \end{enumerate}
Statement \eqref{I:Num X injects} for $\NS$
follows from the fact that 
the formation of $\PIC^0_{X/k}$ respects 
field extension~\cite{Kleiman2005}*{Proposition~9.5.3}.
The proofs of \eqref{I:Num X has finite index} 
and~\eqref{I:Num X_sep vs Num Xbar}
for $\NS$ are the same as for $\Num^p$.
\end{proof}

\begin{proposition}
\label{P:NS over finite field}
If $k$ is finite, then the natural homomorphisms
$\Pic X \to (\Pic \Xsep)^G$ and $\NS X \to (\NS \Xsep)^G$ are isomorphisms.
\end{proposition}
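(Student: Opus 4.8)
The plan is to exploit the Hochschild--Serre spectral sequence for the Galois covering $\Xsep \to X$ together with the finiteness of $G = G_k \cong \Zhat$ when $k$ is finite. First I would recall that for a finite field $k$, the group $G$ is topologically generated by the Frobenius $\phi$, and that $\Br k = 0$ by class field theory (or Wedderburn). The key tool is the exact sequence arising from the Hochschild--Serre spectral sequence $\HH^i(G,\HH^j(\Xsep,\G_m)) \Rightarrow \HH^{i+j}(X,\G_m)$ in the \'etale topology: the low-degree terms give
\[
0 \to \HH^1(G,\HH^0(\Xsep,\G_m)) \to \Pic X \to (\Pic \Xsep)^G \to \HH^2(G,\HH^0(\Xsep,\G_m)) \to \ker\bigl(\Br X \to \Br \Xsep\bigr).
\]
Since $\Xsep$ is a nice variety over the separably closed field $\ksep$, we have $\HH^0(\Xsep,\G_m) = (\ksep)^\times$ (here I use that $X$ is geometrically integral and proper). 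Thus the two obstruction groups are $\HH^1(G,(\ksep)^\times)$ and $\HH^2(G,(\ksep)^\times) = \Br k$.

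The heart of the argument is then the vanishing of both Galois cohomology groups. For $\HH^1$, this is exactly Hilbert's Theorem~90, which gives $\HH^1(G,(\ksep)^\times) = 0$ over any field; this yields injectivity of $\Pic X \to (\Pic \Xsep)^G$ (in fact over an arbitrary base field, but we only need $k$ finite). For surjectivity, I would invoke $\HH^2(G,(\ksep)^\times) = \Br k = 0$, which holds precisely because $k$ is finite. Combining these, $\Pic X \to (\Pic \Xsep)^G$ is an isomorphism. The main subtlety to check carefully is the identification $\HH^0(\Xsep,\G_m) = (\ksep)^\times$ and the fact that $G$ acts on $\Pic \Xsep$ through a finite quotient so that $(\Pic \Xsep)^G$ behaves well; the former follows from properness and geometric integrality of $X$, and the latter is automatic since $\NS \Xsep$ is finitely generated (Section~\ref{S:notation}) and $\Pic^0 \Xsep$ is torsion over a finite field (a point I would note but not belabor, as it is not strictly needed for the $\Pic$ statement).

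For the $\NS$ statement, I would deduce it from the $\Pic$ statement by passing to the quotient by $\Pic^0$. Consider the commutative diagram with exact rows
\[
\begin{array}{ccccccccc}
0 & \to & \Pic^0 X & \to & \Pic X & \to & \NS X & \to & 0 \\
 & & \downarrow & & \downarrow & & \downarrow & & \\
0 & \to & (\Pic^0 \Xsep)^G & \to & (\Pic \Xsep)^G & \to & (\NS \Xsep)^G & \to & \HH^1(G,\Pic^0 \Xsep).
\end{array}
\]
By the already-proved $\Pic$ isomorphism, the middle vertical map is an isomorphism, and the left vertical map is an isomorphism because $\PIC^0_{X/k}$ respects base change and $\HH^1$ of $G$ acting on an abelian variety's points over $\ksep$ is controlled via Lang's theorem (over a finite field, $\HH^1(G, A(\ksep)) = 0$ for $A$ a connected commutative algebraic group, by Lang's theorem). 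A diagram chase then shows the right vertical map $\NS X \to (\NS \Xsep)^G$ is injective, and surjectivity follows once $\HH^1(G,\Pic^0 \Xsep) = 0$, which is again Lang's theorem applied to $\PIC^0_{X/k}$ (a proper group scheme, hence an abelian variety after reduction; the torsion in $\Pic^0$ versus $\PIC^0(k)$ needs a brief comment). I expect the Lang's-theorem input to be the step requiring the most care, since one must either work with $\PIC^0_{X/k}(\ksep)$ directly or argue that the difference between $\Pic^0 \Xsep$ and $\PIC^0_{X/k}(\ksep)$ is cohomologically negligible; a clean alternative is to run the Hochschild--Serre argument for $\G_m$ all the way and identify $\NS$ via $\PIC^\tau$, but the diagram-chase route is more transparent.
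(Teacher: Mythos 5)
Your proposal is correct and takes essentially the same route as the paper: Hochschild--Serre together with $\Br k = 0$ for the isomorphism $\Pic X \isomto (\Pic \Xsep)^G$, then passing to the quotient by $\Pic^0$ and invoking Lang's theorem for $\HH^1(G,\Pic^0 \Xsep) = 0$. One small quibble: the isomorphism $\Pic^0 X \cong (\Pic^0 \Xsep)^G$ on the left of your diagram follows directly from the already-established $\Pic$ isomorphism together with the base-change compatibility of $\PIC^0_{X/k}$ (indeed $\Pic^0 X = \Pic X \intersect \Pic^0 \Xsep = (\Pic \Xsep)^G \intersect \Pic^0 \Xsep = (\Pic^0\Xsep)^G$), and does not itself require Lang's theorem, which enters only to kill the cokernel term $\HH^1(G,\Pic^0 \Xsep)$.
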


\begin{proof}
That $\Pic X \to (\Pic \Xsep)^G$ is an isomorphism
follows from the Hochschild--Serre spectral sequence for \'etale cohomology
and the vanishing of the Brauer group of $k$.
Lang's theorem~\cite{Lang1956} implies $\HH^1(k,\Pic^0 \Xsep)=0$,
so taking Galois cohomology of
\[
	0 \to \Pic^0 \Xsep \to \Pic \Xsep \to \NS \Xsep \to 0
\]
shows that the homomorphism 
$\Pic X = (\Pic \Xsep)^G \to (\NS \Xsep)^G$ is surjective.
On the other hand, its image is $\NS X$.
\end{proof}

\section{Hypotheses and conjectures}
\label{S:hypotheses and conjectures}

Our computability results rely on the ability to compute \'etale
cohomology with finite coefficients.
Some of the results are conditional also on the Tate conjecture
and related conjectures.
We now formulate these hypotheses precisely,
so that they can be referred to in our main theorems.

\subsection{Explicit representation of objects}
\label{S:explicit}

To specify an ideal in a polynomial ring over $\Z$ 
in finitely many indeterminates,
we give a finite list of generators.
To specify a finitely generated $\Z$-algebra $A$,
we give an ideal $I$ in a polynomial ring $R$ as above
such that $A$ is isomorphic to $R/I$.
To specify a finitely generated field $k$, 
we give a finitely generated $\Z$-algebra $A$ that is a domain
such that $k$ is isomorphic to $\Frac A$.
To specify a continuous $G_k$-action on a finitely generated abelian group $A$,
we give a finite Galois extension $k'$ of $k$ 
together with an action of $\Gal(k'/k)$ on $A$
such that there exists a $k$-embedding $k' \injects \ksep$
such that the original $G_k$-action is the composition
$G_k \surjects \Gal(k'/k) \to \Aut A$. 
To specify a $G_k$-action on finitely many finitely generated abelian groups,
we use the same $k'$ for all of them.
To specify a projective variety $X$, 
we give its homogeneous ideal for a particular embedding of $X$
in some projective space.
To specify a codimension~$p$ cycle on $X$, 
we give an explicit integer combination of codimension~$p$ integral
subvarieties of $X$.

\begin{definition}
\label{D:map on cycles}
Given $k$, $X$, and $p$ as in Setup~\ref{Setup},
to compute a $G_k$-module homomorphism $f$ from $\calZ^p(\Xsep)$ 
to an (abstract) finitely generated $G_k$-module $A$
means to compute
\begin{itemize}
\item a finite Galois extension $k'$ of $k$,
\item an explicit finitely generated $\Gal(k'/k)$-module $A'$, and
\item an algorithm that takes as input a finite separable extension $L$ of $k'$
and an element of $\calZ^p(X_L)$ and returns an element of $A'$,
\end{itemize}
such that there exists a $k$-embedding $k' \injects \ksep$
and an isomorphism $A' \isomto A$
such that the composition
$\calZ^p(X_L) \to A' \isomto A$
factors as $\calZ^p(X_L) \to \calZ^p(\Xsep) \stackrel{f} \to A$
for some (or equivalently, every) $k'$-embedding $L \injects \ksep$.
\end{definition}

\begin{remark}
\label{R:Z^tau}
A similar definition can be made for $G_k$-module homomorphisms
defined only on a $G_k$-submodule of $\calZ^p(\Xsep)$.
\end{remark}

\begin{remark}
\label{R:k in C}
If $k$ is a finitely generated field of characteristic~$0$,
we can explicitly identify finite extensions of $k$ with subfields of $\C$
consisting of computable numbers as follows.
(To say that $z \in \C$ is computable 
means that there is an algorithm that given $n \in \Z_{\ge 1}$
returns an element $\alpha \in \Q(i)$ such that $|z-\alpha| < 1/n$.)
Let $t_1,\ldots,t_n$ be a transcendence basis for $k$ over $\Q$.
Embed $\Q(t_1,\ldots,t_n)$ in $\C$ by mapping $t_j$ to $\exp(2^{1/j})$;
these are algebraically independent over $\Q$ 
by the Lindemann--Weierstrass theorem.
As needed, embed finite extensions of $\Q(t_1,\ldots,t_n)$ 
(starting with $k$) into $\C$ by 
writing down the minimal polynomial of each new field generator
over the subfield generated so far,
together with an approximation to an appropriate root in $\C$
good enough to distinguish it from the other roots.
\end{remark}

Remark~\ref{R:k in C} will be useful in relating \'etale cohomology
over $\kbar$ to singular cohomology over~$\C$.

\subsection{Computability of  \'etale cohomology}

\begin{hypothesis}[Cohomology is computable]
\label{H:compute etale 2}
There is an algorithm that takes as input $(k,X,\ell)$ as in Setup~\ref{Setup} 
and $i,n \in \Z_{\ge 0}$,
and returns a finite $G_k$-module isomorphic to $\HH^i(\Xsep,\Z/\ell^n\Z)$.
\end{hypothesis}

\begin{remark}
\label{R:compute Tate twist}
Hypothesis \ref{H:compute etale 2} implies also that we can compute 
the Tate twist 
$\HH^i(\Xsep,(\Z/\ell^n\Z)(p))\isom \HH^{i}(\Xsep,\Z/\ell^n\Z)(p)$ 
for any $p \in \Z$.
\end{remark}

We will prove Hypothesis~\ref{H:compute etale 2} for $k$ of characteristic~$0$
(Theorem~\ref{T:compute etale in char 0}).
In arbitrary characteristic, we show only that we can 
``approximate $\HH^i(\Xsep,\Z/\ell^n\Z)$ from below''
(Proposition~\ref{P:lower bound on Hyp groups}),
but as mentioned in the introduction, a proof of 
Hypothesis~\ref{H:compute etale 2} in full has been announced 
\cite{Madore-Orgogozo-preprint}*{Th\'eor\`eme~0.9}.

Following a suggestion of Lenny Taelman,
we use \'etale \Cech\ cocycles.
By \cite{Artin1971}*{Corollary 4.2}, 
every element of $\HH^i(\Xsep,\Z/\ell^n\Z)$
can be represented by a \Cech\ cocycle for some \'etale cover.
Any \'etale cover $\calU = (U_j \to \Xsep)_{j\in J}$
may be refined by one for which $J$ is finite
and the morphisms $U_j \to \Xsep$ are of finite presentation;
from now on, we assume that all \'etale covers satisfy these finiteness
conditions.
Then we can enumerate all \'etale \Cech\ cochains.

Fix a projective embedding of $X$.
Choose an \'etale \Cech\ cocycle representing the class
of $\OO_{\Xsep}(1)$ in $\HH^1(\Xsep,\G_m)$.
Using the Kummer sequence
\[
	0 \to \mu_{\ell^n} \to \G_m \to \G_m \to 0
\]
compute its coboundary:
this is a cocycle representing the class of a hyperplane section in 
$\HH^2(\Xsep,\Z/\ell^n\Z)$ (we ignore the Tate twist for now).
Compute its $d$-fold cup product in 
$\HH^{2d}(\Xsep,\Z/\ell^n) \isom \Z/\ell^n\Z$;
this represents $D$ times the class of a point,
where $D$ is the degree of $X$.
If $\ell \nmid D$, we can multiply by the inverse of $(D \bmod \ell)$
to obtain the class of a point.
In general, let $\ell^m$ be the highest power of $\ell$ dividing $D$;
repeat the construction above to obtain a cocycle $\eta_D$ representing
$D$ times the class of a point in 
$\HH^{2d}(\Xsep,\Z/\ell^{m+n}\Z) \isom \Z/\ell^{m+n}\Z$.
Search for another cocycle $\eta_1$ in the same group
such that $D \eta_1 - \eta_D$ is the coboundary of another cochain
on some refinement.
Eventually $\eta_1$ will be found, and reducing its values modulo $\ell^n$
yields a cocycle representing the class of a point in 
$\HH^{2d}(\Xsep,\Z/\ell^n\Z)$.

\begin{lemma}
\label{L:etale cocycle is 0}
There is an algorithm that takes as input $(k,X,\ell)$ 
as in Setup~\ref{Setup} and $i,n \in \Z_{\ge 0}$
and two \'etale \Cech\ cocycles
representing elements of $\HH^i(\Xsep,\Z/\ell^n)$,
and decides whether their classes are equal.
\end{lemma}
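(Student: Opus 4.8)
The plan is to reduce "are two cocycles cohomologous?" to a search problem that we already know how to solve, using the pairing into $\HH^{2d}(\Xsep,\Z/\ell^n)\isom\Z/\ell^n\Z$ together with the ability, established in the discussion preceding the lemma, to write down a \Cech\ cocycle representing the class of a point (a generator of that top group). Given the two input cocycles $c_1,c_2$ on \'etale covers $\calU_1,\calU_2$, first I would refine to a common \'etale cover $\calU$ (fiber products of the two covers, then further refinements) on which both are defined, and form the difference $c\colonequals c_1-c_2$, a \Cech\ $i$-cocycle on $\calU$ with values in $\Z/\ell^n$; the question becomes whether $[c]=0$ in $\HH^i(\Xsep,\Z/\ell^n)$. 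Being cohomologous to zero is a $\Sigma_1$ (semidecidable) condition: by \cite{Artin1971}*{Corollary~4.2} again, $[c]=0$ iff on some \'etale refinement $c$ becomes the coboundary of an $(i-1)$-cochain, and we can enumerate all refinements and all cochains on them and test the finitely many linear conditions over $\Z/\ell^n$ defining "$c = d(\text{cochain})$" on each. So one half of a decision procedure runs a search that halts precisely when $[c]=0$.

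The harder half is to detect, and then \emph{certify in finitely many steps}, that $[c]\neq 0$. This is where Poincar\'e duality for \'etale cohomology with finite coefficients does the work: the cup-product pairing
\[
	\HH^i(\Xsep,\Z/\ell^n) \times \HH^{2d-i}(\Xsep,(\Z/\ell^n)(d))
	\To \HH^{2d}(\Xsep,(\Z/\ell^n)(d)) \isom \Z/\ell^n\Z
\]
is perfect (here one uses that $\Xsep$ is nice, so smooth proper over the separably closed field $\ksep$). Hence $[c]\neq 0$ iff there exists a class $[c']\in\HH^{2d-i}(\Xsep,(\Z/\ell^n)(d))$ with $[c]\cup[c']\neq 0$ in $\Z/\ell^n\Z$. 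Now run a second search: enumerate all \'etale covers refining $\calU$, all \Cech\ $(2d-i)$-cocycles $c'$ on them (with the appropriate Tate twist, computable by Remark~\ref{R:compute Tate twist}), compute the cup product $c\cup c'$ as an explicit \Cech\ $2d$-cocycle, and compare it against integer multiples of the fixed point-class cocycle $\eta_1$ using the \emph{same} bookkeeping: $c\cup c'$ represents $a\cdot[\text{pt}]$ for a unique $a\in\Z/\ell^n\Z$, which we extract by searching for the cochain exhibiting $c\cup c' - a\,\eta_1$ as a coboundary, for each candidate $a\in\{0,1,\dots,\ell^n-1\}$. If some $c'$ yields $a\neq 0$ we have certified $[c]\neq 0$.

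Finally, dovetail the two searches: run the "is a coboundary?" search for $c$ and, in parallel, the "find a dual class pairing nontrivially" search. Exactly one of them terminates — the first if $[c]=0$, the second if $[c]\neq 0$, by perfectness of the pairing — and we output the corresponding answer; this proves decidability. I expect the main obstacle to be purely the bookkeeping needed to make cup products of explicit \'etale \Cech\ cocycles (on varying refinements, with Tate twists) genuinely algorithmic and to pin down the resulting element of $\HH^{2d}(\Xsep,\Z/\ell^n)\isom\Z/\ell^n\Z$ as an honest integer mod $\ell^n$; the cited results (\cite{Artin1971}*{Corollary~4.2} for \Cech\ representability, Poincar\'e duality from \cite{MilneEtaleCohomology1980}, and the point-class construction already carried out above) supply everything conceptual, so no further deep input is needed.
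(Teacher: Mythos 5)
Your proposal matches the paper's proof almost exactly: reduce to testing whether a single cocycle is zero by subtracting, then dovetail a ``day'' search for a coboundary witnessing triviality against a ``night'' search for a complementary cocycle whose cup product with the given one pairs to a nonzero multiple of the point class, with Poincar\'e duality guaranteeing one of the two searches terminates. The only cosmetic differences (explicit mention of the Tate twist, and extracting the scalar $a$ by a separate sub-search) do not change the argument.
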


\begin{proof}
We can subtract the cocycles, so it suffices to test whether 
a cocycle $\eta$ represents $0$.
By day, search for a cochain on some refinement whose coboundary is $\eta$.
By night, search for a cocycle $\eta'$ 
representing a class in $\HH^{2d-i}(\Xsep,\Z/\ell^n\Z)$,
an integer $j \in \{1,2,\ldots,\ell^n-1\}$,
and a cochain whose coboundary differs from $\eta \cup \eta'$
by $j$ times the class of a point in $\HH^{2d}(\Xsep,\Z/\ell^n\Z)$
(see \cite{Liu2002}*{p.~194, Exercise~2.17} for an explicit formula
for the cup product).
The search by day terminates if the class of $\eta$ is $0$,
and the search by night terminates if the class of $\eta$ is nonzero,
by Poincar\'e duality \cite{SGA4.5}*{p.~71, Th\'eor\`eme~3.1}.
\end{proof}

\begin{proposition}
\label{P:lower bound on Hyp groups}
There is an algorithm that takes as input $(k,X,\ell)$ 
as in Setup~\ref{Setup} and $i,n \in \Z_{\ge 0}$
such that,
when left running forever, 
it prints out an infinite sequence $\Lambda_0 \subset \Lambda_1 \subset \ldots$
of finite $G_k$-modules 
that stabilizes at a $G_k$-module isomorphic to $\HH^i(\Xsep,\Z/\ell^n\Z)$.
\end{proposition}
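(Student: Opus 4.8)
The plan is to combine Lemma~\ref{L:etale cocycle is 0} with a brute-force enumeration of étale \Cech{} cocycles to build $\HH^i(\Xsep,\Z/\ell^n\Z)$ incrementally as a $G_k$-module, while using a growth-rate bound to recognize when the construction has stabilized. First I would fix a finite Galois extension $k'/k$ large enough to realize the (a priori unknown) $G_k$-action on $\HH^i(\Xsep,\Z/\ell^n\Z)$; since $\HH^i(\Xsep,\Z/\ell^n\Z)$ is a finite group with a continuous $G_k$-action, such a $k'$ exists, but we do not know it in advance, so instead the algorithm enumerates candidate finite Galois extensions of $k$ in parallel and, for each, enumerates étale \Cech{} cocycles defined over the extensions of that $k'$. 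At stage $N$ the algorithm has examined finitely many cocycles; using Lemma~\ref{L:etale cocycle is 0} it can test equality of classes and also test whether a $\Gal(k'/k)$-translate of one cocycle is cohomologous to an integer combination of others, so it can compute the subgroup $\Lambda_N \le \HH^i(\Xsep,\Z/\ell^n\Z)$ generated by the classes seen so far, together with its $\Gal(k'/k)$-action (enlarging $k'$ as needed so that the action on $\Lambda_N$ is defined over $k'$). By \cite{Artin1971}*{Corollary~4.2} every class is represented by some étale \Cech{} cocycle, so $\Lambda_0 \subset \Lambda_1 \subset \cdots$ is an increasing chain of finite $G_k$-submodules whose union is all of $\HH^i(\Xsep,\Z/\ell^n\Z)$; since the target is finite, the chain stabilizes.

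The subtle point is that the algorithm as described prints the $\Lambda_N$ but does not itself know when stabilization has occurred — yet the proposition only asserts that such a sequence is printed, not that termination is detected, so this is already enough. Concretely, the output is: run forever, and after processing the $N$-th cocycle print the current $\Lambda_N$ (as an abstract finite group with $\Gal(k'_N/k)$-action, using the $k'_N$ found so far). Because each new class either lies in the span of the previous ones (in which case $\Lambda_N = \Lambda_{N-1}$, and we may simply not reprint, or reprint the same module) or strictly enlarges it, the printed sequence is eventually constant and equals a $G_k$-module isomorphic to $\HH^i(\Xsep,\Z/\ell^n\Z)$.

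I would organize the verification as follows: (i) enumerability of étale \Cech{} cochains and cocycles over the finite separable extensions of a given $k'$ (this uses the finiteness conventions on étale covers fixed just before Lemma~\ref{L:etale cocycle is 0}, together with the fact that we can enumerate finite separable extensions of a finitely generated field and morphisms of finite presentation between explicit affine schemes); (ii) decidability of the linear-algebra questions over $\Z/\ell^n\Z$ needed to maintain $\Lambda_N$ and its Galois action — namely, given finitely many cocycles, decide which $\Z/\ell^n\Z$-combinations of them are coboundaries, which reduces by Lemma~\ref{L:etale cocycle is 0} to finitely many class-equality tests; and (iii) the stabilization argument, which is immediate from finiteness of the target and surjectivity of \Cech{} cohomology onto $\HH^i$.

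The main obstacle is step~(ii): we must be careful that recognizing the $G_k$-module structure — not merely the underlying abelian group — can be done effectively. For this, each time a new class is adjoined we compute the action of the chosen generators of $\Gal(k'_N/k)$ on it by applying the Galois automorphisms to the (explicitly given) étale cover and cocycle, obtaining another cocycle, and then expressing its class in terms of the current generators of $\Lambda_N$ via step~(ii); if the result does not lie in $\Lambda_N$ we have in fact found a new class and enlarge $\Lambda_N$ accordingly, and if the action of $\Gal(k'_N/k)$ fails to descend (because the true field of definition of the action is larger) we enlarge $k'_N$ and recompute. Since the true $\HH^i(\Xsep,\Z/\ell^n\Z)$ is finite with action defined over some finite Galois $k'/k$, this process uses only finitely many enlargements before the printed $\Lambda_N$ is correct as a $G_k$-module.
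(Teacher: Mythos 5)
Your proposal is correct and follows essentially the same strategy as the paper's proof: enumerate étale \v{C}ech cocycles, use Lemma~\ref{L:etale cocycle is 0} to test which $\Z/\ell^n\Z$-combinations of the classes found so far are coboundaries, close up under Galois conjugation (enlarging the finite Galois extension $k'$ as needed), and observe that the resulting increasing chain of finite $G_k$-submodules must stabilize at the full group by Artin's result that \v{C}ech cohomology computes $\HH^i$ here. The paper's proof is much terser and leaves the bookkeeping of $k'$ implicit, but the underlying argument is the same; your additional care in step (iii) about incrementally discovering the field of definition of the Galois action is a harmless elaboration, not a new idea.
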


\begin{proof}
By enumerating \v{C}ech cocycles, 
we represent more and more classes inside $\HH^i(\Xsep,\Z/\ell^n\Z)$.
At any moment, we may construct the $G_k$-module structure of 
the finite subgroup generated by the classes found so far 
and their Galois conjugates,
by using Lemma~\ref{L:etale cocycle is 0}
to test which $\Z/\ell^n\Z$-combinations of them are $0$.
Eventually this $G_k$-module is the whole of $\HH^i(\Xsep,\Z/\ell^n\Z)$
(even if we do not yet have a way to detect when this has happened).
\end{proof}

\begin{proposition}
\label{P:singular cohomology}
There is an algorithm that takes as input $(k,X,\ell)$ 
as in Setup~\ref{Setup} and $i,n \in \Z_{\ge 0}$,
where $k$ is of characteristic~$0$,
and computes a finite abelian group
isomorphic to the singular cohomology group $\HH^i(X(\C),\Z/\ell^n\Z)$
for some embedding $k \injects \C$ as in Remark~\ref{R:k in C}.
Similarly, one can compute a finitely generated abelian group
isomorphic to $\HH^i(X(\C),\Z)$.
\end{proposition}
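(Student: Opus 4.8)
The plan is to reduce the computation of singular cohomology of $X(\C)$ to a finite combinatorial computation with a simplicial (or CW) complex, obtained from an explicit semialgebraic triangulation of the complex points. First I would fix an embedding $k \injects \C$ as in Remark~\ref{R:k in C}, so that the homogeneous ideal of $X$ is defined by polynomials whose coefficients are computable complex numbers. Realizing $X(\C) \subset \PP^N(\C)$ and then $\PP^N(\C) \subset \R^M$ via an explicit real-algebraic embedding (e.g.\ the one sending a point to the Hermitian projection matrix onto the corresponding line, whose entries are polynomials in the homogeneous coordinates and their conjugates), I obtain $X(\C)$ as a compact real semialgebraic set cut out by finitely many polynomial equalities and inequalities with computable real coefficients. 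Since $X$ is smooth and projective, $X(\C)$ is a compact real-analytic manifold, in particular a compact semialgebraic set.

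Next I would invoke the algorithmic version of semialgebraic triangulation. There is an algorithm that, given a semialgebraic set described by polynomials with coefficients in a computable ordered subfield of $\R$ (or given sufficiently good rational approximations to the coefficients, together with a separation bound guaranteeing the combinatorial type is determined), produces a finite simplicial complex $K$ together with a semialgebraic homeomorphism $|K| \to X(\C)$; see the effective triangulation results in real algebraic geometry (for instance Basu--Pollack--Roy, \emph{Algorithms in Real Algebraic Geometry}, or Hironaka's triangulation theorem made effective). The only subtlety is that our coefficients are computable reals rather than exact algebraic numbers, but since $X(\C)$ is a smooth compact manifold the triangulation is combinatorially stable under small perturbations of the coefficients; concretely, one approximates each coefficient to increasing precision, runs the triangulation algorithm, and uses a (computable, by quantifier elimination) lower bound on how much the coefficients may vary without changing the homeomorphism type to certify that the output is correct. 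Once $K$ is in hand, $\HH^i(X(\C),\Z)$ and $\HH^i(X(\C),\Z/\ell^n\Z)$ are computed by elementary linear algebra: form the simplicial cochain complexes $C^\bullet(K;\Z)$ and $C^\bullet(K;\Z/\ell^n\Z)$, and compute kernels and images of the coboundary maps (Smith normal form over $\Z$ in the first case).

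The main obstacle is the effectivity of semialgebraic triangulation together with the passage from exact to approximate (computable-real) coefficients. Classical triangulation theorems are stated non-constructively, and the algorithmic versions in the literature assume exact input; the delicate point is to certify correctness of the triangulation when one only has an oracle producing rational approximations to the defining coefficients. I expect this to be handled exactly as sketched above—combining an effective triangulation algorithm over $\Q$ with a quantifier-elimination bound on the size of a perturbation that preserves the homeomorphism type, using smoothness and compactness of $X(\C)$—but making this rigorous is where the real work lies. Everything downstream (building cochain complexes, computing cohomology over $\Z$ and over $\Z/\ell^n\Z$) is routine finite linear algebra.
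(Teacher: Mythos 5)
Your approach is essentially the paper's: the ``Hermitian projection matrix'' embedding you describe \emph{is} the Mannoury embedding $\PP^n(\C)\injects\R^{(n+1)^2}$, $(z_0:\cdots:z_n)\mapsto(z_i\bar z_j/\sum_k z_k\bar z_k)$ that the paper uses, and the paper likewise cites the effective triangulation results of Basu--Pollack--Roy and then reads off the (co)homology of the resulting finite simplicial complex.

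The one place you diverge is where you flag a ``delicate point'': that the coefficients are ``computable reals rather than exact algebraic numbers,'' so that the triangulation algorithm would need a perturbation/certification argument. This is actually a non-issue, and the machinery you propose to handle it is unnecessary. The field $k$ is given explicitly as the fraction field of a finitely presented $\Z$-algebra, and the embedding $k\injects\C$ of Remark~\ref{R:k in C} is chosen so that one has both exact symbolic arithmetic in $k$ (and in the finite extensions used) \emph{and} arbitrarily good numerical approximations. In particular, the real and imaginary parts of the coefficients appearing in the semialgebraic description of $X(\C)\subset\R^{(n+1)^2}$ all lie in an explicitly presented, computable ordered subfield of $\R$: equality can be decided symbolically, and sign can then be decided by refining a numerical approximation until it separates from $0$. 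The Basu--Pollack--Roy algorithms are stated precisely over such ordered domains, so they apply directly with exact arithmetic; there is no need to approximate the input and certify stability of the homeomorphism type. Everything else in your proposal---the compactness and smoothness of $X(\C)$, the construction of the simplicial cochain complex, Smith normal form over $\Z$---is correct and matches the intended argument.
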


\begin{proof}
One approach is to 
embed $X$ in some $\PP^n_k$ and compose $X(\C) \to \PP^n(\C)$ 
with the Mannoury embedding~\cite{Mannoury1900}
\begin{align*}
	\PP^n(\C) &\injects \R^{(n+1)^2} \\
	(z_0 : \cdots : z_n) &\mapsto 
		\left( \frac{z_i \bar{z}_j}{\sum_k z_k \bar{z}_k} : 
				0 \le i,j \le n \right)
\end{align*}
to identify $X(\C)$ with a semialgebraic subset of Euclidean space,
and then to apply \cite{Basu-Pollack-Roy2006}*{Remark~11.19(b) and the 
results it refers to} 
to compute a finite triangulation of $X(\C)$,
which yields the cohomology groups with coefficients in $\Z$ or $\Z/\ell^n\Z$.
For an alternative approach, see~\cite{Simpson2008}*{Section~2.5}.
\end{proof}

\begin{theorem}
\label{T:compute etale in char 0}
Hypothesis~\ref{H:compute etale 2} restricted to characteristic~$0$
is true.
\end{theorem}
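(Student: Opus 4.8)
The plan is to combine Artin's comparison theorem between étale and singular cohomology with the explicit computability of singular cohomology over $\C$ established in Proposition~\ref{P:singular cohomology}, and then to recover the Galois action on the étale group by comparing cohomology over many finite extensions of $k$.

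\textbf{Step 1: Compute the underlying abelian group.}
First I would invoke Remark~\ref{R:k in C} to fix an explicit embedding $k \injects \C$ with $k$ realized inside the computable complex numbers. By Artin's comparison theorem \cite{SGA4.3}*{Expos\'e~XI}, there is a canonical isomorphism
\[
	\HH^i_{\et}(\Xbar, \Z/\ell^n\Z) \isom \HH^i(X(\C),\Z/\ell^n\Z)
\]
of abelian groups, and since $\kappa = 0$ we have $\HH^i(\Xsep,\Z/\ell^n\Z) = \HH^i(\Xbar,\Z/\ell^n\Z)$. So Proposition~\ref{P:singular cohomology} already computes a finite abelian group isomorphic to $\HH^i(\Xsep,\Z/\ell^n\Z)$. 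The issue is that Hypothesis~\ref{H:compute etale 2} demands the full \emph{$G_k$-module} structure, not merely the abstract group.

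\textbf{Step 2: Recover the Galois action.}
To get the action, I would first show that $G_k$ acts on $\HH^i(\Xsep,\Z/\ell^n\Z)$ through a finite quotient $\Gal(k'/k)$ for some explicitly findable finite Galois extension $k'/k$: indeed, the group is finite, so its automorphism group is finite, so the kernel of $G_k \to \Aut \HH^i(\Xsep,\Z/\ell^n\Z)$ is open, and $k'$ is the corresponding finite extension. The key observation is that for any finite extension $L/k$, one has $\HH^i((X_L)^{\sep},\Z/\ell^n\Z) = \HH^i(\Xsep,\Z/\ell^n\Z)$ as groups, but the action of $G_L$ is the restriction of that of $G_k$; and more usefully, $\HH^i(X_L(\C),\Z/\ell^n\Z)^{G_L}$ can be read off from an embedding $L \injects \C$ via Proposition~\ref{P:singular cohomology} applied to $X_L$ — wait, that only gives invariants after further work, since singular cohomology over $\C$ does not see the Galois action directly. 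Instead, the cleaner route is: for each candidate finite Galois extension $k'$ (enumerate them) and each candidate action $\rho$ of $\Gal(k'/k)$ on the already-computed group $A \isom \HH^i(\Xsep,\Z/\ell^n\Z)$, test whether $\rho$ is the correct action by comparing, over every finite extension $L$ of $k$, the predicted invariant subgroup $A^{\rho(G_L)}$ with the true fixed subgroup. The true fixed subgroup over $L$ equals $\HH^i(X_L(\C),\Z/\ell^n\Z)$ only when $L$ splits the action, but in general the fixed subgroup under $G_L$ can be computed as follows: it is the image of $\HH^i_{\et}((X_{L'})/L,\Z/\ell^n\Z)$ — no.

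\textbf{A cleaner Step 2.}
Actually the right tool is the Hochschild--Serre spectral sequence together with the fact that étale cohomology of the \emph{non-geometric} scheme $X_L$ with $\Z/\ell^n\Z$ coefficients \emph{is} computable over any finitely generated field of characteristic zero by spreading out and reduction — but that is circular. The honest approach, which I expect the authors take: use that over $\C$ one can compute not just $\HH^i(X(\C),\Z/\ell^n\Z)$ but also the pullback maps induced by automorphisms of $X$ defined over $k$, and more generally the action of $\Gal(k'/k)$ on $\HH^i(X_{k'}(\C),\Z/\ell^n\Z) = \HH^i((X_{k'})^{\sep},\Z/\ell^n\Z)$ via functoriality of the comparison isomorphism under the (finitely many, explicitly describable) $k$-automorphisms of $k'$: each $\sigma \in \Gal(k'/k)$ induces a $k$-semilinear automorphism of $X_{k'}$, hence a self-map of the complex manifold $X_{k'}(\C)$ once we fix compatible embeddings $k' \injects \C$ and $\sigma^*(k' \injects \C)$, and this self-map on the explicitly triangulated space $X_{k'}(\C)$ induces a computable map on simplicial cochains, giving the matrix of $\sigma$ on $\HH^i$. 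Thus: enumerate finite Galois $k'/k$; for each, compute $X_{k'}(\C)$ and its triangulation and the action of $\Gal(k'/k)$ by tracking the twisted embeddings; then check whether $G_k$ acts through $\Gal(k'/k)$ by verifying (via Step~1 applied over a further extension) that the action has become trivial — equivalently, that $\HH^i((X_{k''})^{\sep}) = \HH^i(X_{k''}(\C))$ stabilizes; when it does, output the $\Gal(k'/k)$-module. Termination is guaranteed because such a splitting field $k'$ exists.

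\textbf{Main obstacle.}
The hard part is making precise and correct the claim that the Galois action on $\HH^i(\Xsep,\Z/\ell^n\Z)$ can be matched with an action computed topologically over $\C$ — i.e., that the comparison isomorphism $\HH^i_{\et}(\Xbar,\Z/\ell^n\Z) \isom \HH^i(X(\C),\Z/\ell^n\Z)$ is \emph{equivariant} for the relevant automorphisms in a way we can algorithmically exploit, and that we have a decidable test for when a candidate finite extension $k'$ actually splits the action. One must handle the subtlety that different embeddings $k' \injects \C$ give different complex manifolds $X_{k'}(\C)$, related precisely by the Galois twisting, and that the comparison isomorphism is compatible with these twists; this is classical (it is the statement that $\Gal(\C/k)$ acts compatibly on both sides), but turning it into a finite computation — via finitely many semialgebraic maps between finite triangulations — is where the real work lies.
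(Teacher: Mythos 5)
Your proposal takes a genuinely different route from the paper, and in the end it does not reach a complete argument; the difference is worth spelling out because the paper's approach sidesteps exactly the difficulty you got stuck on.

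Your Step~1 is correct and is also the first step in the paper: the Artin comparison theorem plus Proposition~\ref{P:singular cohomology} gives the abstract finite abelian group. Where you diverge is in Step~2. You try to recover the $G_k$-action \emph{topologically}, by enumerating finite Galois extensions $k'$, tracking twisted embeddings $k'\hookrightarrow\C$, and computing induced maps on triangulations — essentially the approach of Remark~\ref{R:Taelman}. You correctly identify the obstacles this creates (equivariance of the comparison isomorphism under Galois twisting, compatible triangulations for varying embeddings, and a decidable stopping test), but you do not resolve them, and your argument ends with an acknowledged gap.

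The paper avoids all of this by using Proposition~\ref{P:lower bound on Hyp groups}, which you did not invoke. That proposition enumerates \'etale \v{C}ech cocycles over $\Xsep$ and their Galois conjugates, using Lemma~\ref{L:etale cocycle is 0} (Poincar\'e duality) to decide equality of cohomology classes; this produces an increasing chain of finite $G_k$-submodules $\Lambda_0\subset\Lambda_1\subset\cdots$ that provably stabilizes at $\HH^i(\Xsep,\Z/\ell^n\Z)$ \emph{with its Galois action}. The only missing ingredient is a stopping criterion, and that is the \emph{only} thing the singular cohomology computation is used for: compute $\#\HH^i(X(\C),\Z/\ell^n\Z)$, run the \'etale enumeration until $\#\Lambda_j$ equals that number, and stop. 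The Galois module structure comes entirely from the \'etale side, where it is unproblematic; the transcendental side contributes just an integer. So the gap in your proof is concrete: you needed (and implicitly tried to reinvent, with considerable extra difficulty) a lower-approximation procedure for the $G_k$-module, when the paper already supplies one purely algebraically.
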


\begin{proof}
Identify $k$ with a subfield of $\C$ as in Remark~\ref{R:k in C}.
By the standard comparison theorem,
the \'etale cohomology group $\HH^i(\Xsep,\Z/\ell^n\Z)$
is isomorphic to the singular cohomology group $\HH^i(X(\C),\Z/\ell^n\Z)$.
Use Proposition~\ref{P:singular cohomology} to compute
the size of the latter.
Run the algorithm in Proposition~\ref{P:lower bound on Hyp groups}
and stop once $\#\Lambda_j$ equals this integer.
Then $\Lambda_j \isom \HH^i(\Xsep,\Z/\ell^n\Z)$.
\end{proof}

\begin{corollary}
\label{C:compute etale by lifting}
Hypothesis~\ref{H:compute etale 2} restricted to 
varieties in positive characteristic
that lift to characteristic~$0$ is true.
\end{corollary}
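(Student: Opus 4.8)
The plan is to imitate the proof of Theorem~\ref{T:compute etale in char 0}. The algorithm of Proposition~\ref{P:lower bound on Hyp groups}, run on $(k,X,\ell)$ and $(i,n)$, already prints an increasing chain $\Lambda_0 \subset \Lambda_1 \subset \cdots$ of finite $G_k$-modules that stabilizes at one isomorphic to $\HH^i(\Xsep,\Z/\ell^n\Z)$; so it suffices to compute the single integer $N \colonequals \#\HH^i(\Xsep,\Z/\ell^n\Z)$ and then halt that algorithm the first time $\#\Lambda_j = N$. The content of the corollary is that $N$ can be extracted from a characteristic-$0$ lift.

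First I would spread out the lift. It provides a domain $A$ finitely generated over $\Z$, a prime $\pp \subset A$ whose residue field is $k$ (so $\kappa \in \pp$, while $K \colonequals \Frac A$ has characteristic $0$), and a smooth projective morphism $f \colon \calX \to \Spec A$ whose fibre over $\pp$ is isomorphic to $X$. Since $\ell \ne \kappa$ we have $\ell \notin \pp$, so after replacing $A$ by $A[1/\ell]$ we may assume $\ell$ is invertible on $\Spec A$. Note that $\Spec A$ is irreducible, hence connected, with generic point $\eta$ of residue field $K$, a finitely generated field of characteristic~$0$.

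Next I would invoke the smooth and proper base change theorems. Since $f$ is smooth and proper and $\ell$ is invertible on $\Spec A$, the sheaf $\calF \colonequals R^i f_*(\Z/\ell^n\Z)$ on $(\Spec A)_\et$ is locally constant constructible; here it is the coefficient characteristic $\ell$, not the residue characteristic $\kappa$, that must be invertible on the base, so the mixed-characteristic situation is harmless. Because $\Spec A$ is connected, all geometric stalks of $\calF$ have a common cardinality. The stalk at a geometric point lying over $\pp$ is $\HH^i(\Xbar,\Z/\ell^n\Z)$ by proper base change, and this is isomorphic to $\HH^i(\Xsep,\Z/\ell^n\Z)$ because $\Xbar \to \Xsep$ is a universal homeomorphism; so the common cardinality is $N$. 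The stalk at a geometric point lying over $\eta$ is $\HH^i(\calX \times_A \Kbar,\Z/\ell^n\Z)$. Taking $i=0$ in this comparison and using that $\Xbar$ is connected shows that $\calX \times_A \Kbar$ is connected, i.e. the generic fibre $\calX_\eta \colonequals \calX \times_A K$ is geometrically connected; since $\calX_\eta$ is smooth over $K$ it follows that $\calX \times_A \Kbar$ is integral, so $\calX_\eta$ is geometrically integral and hence nice. As $K$ has characteristic~$0$, its separable and algebraic closures coincide, so Theorem~\ref{T:compute etale in char 0} applied to $(K,\calX_\eta,\ell)$ and $(i,n)$ returns a finite abelian group isomorphic to $\HH^i(\calX \times_A \Kbar,\Z/\ell^n\Z)$; counting its elements yields $N$.

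Finally, with $N$ in hand, run the algorithm of Proposition~\ref{P:lower bound on Hyp groups} on $(k,X,\ell)$ and $(i,n)$ and halt the first time $\#\Lambda_j = N$; the resulting $\Lambda_j$ is isomorphic to $\HH^i(\Xsep,\Z/\ell^n\Z)$ as a $G_k$-module, and we output it. I expect the only point requiring real care to be the application of smooth and proper base change over the mixed-characteristic base $\Spec A$, together with checking that the geometric generic fibre $\calX_\eta$ is geometrically integral so that Theorem~\ref{T:compute etale in char 0} applies to it --- and the latter is precisely the $i=0$ instance of the stalk comparison above.
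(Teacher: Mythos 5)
Your proposal is correct and follows essentially the same route as the paper's proof, which is very terse: the paper simply says to find a lift $\calX$, compute the size of $\HH^i(\calX^\sep,\Z/\ell^n\Z)$ using Theorem~\ref{T:compute etale in char 0}, note that it is isomorphic as an abelian group to $\HH^i(\Xsep,\Z/\ell^n\Z)$, and then run Proposition~\ref{P:lower bound on Hyp groups}. You have filled in exactly the content hidden behind the phrase ``which is isomorphic to the desired group'': spreading out over a finitely generated $\Z$-algebra, inverting $\ell$, invoking smooth and proper base change to get a lisse sheaf on a connected base, comparing stalks at a geometric point over $\pp$ and over the generic point, and checking via the $i=0$ case that the generic fibre is geometrically integral so that Theorem~\ref{T:compute etale in char 0} applies.
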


\begin{proof}
If $X$ lifts to a nice variety $\calX$ in characteristic $0$,
then we can search for a suitable $\calX$ until we find one,
and then compute the size of $\HH^i(\calX^{\textup{sep}},\Z/\ell^n\Z)$, 
which is isomorphic to the desired group $\HH^i(\Xsep,\Z/\ell^n\Z)$.
Then run the algorithm in Proposition~\ref{P:lower bound on Hyp groups}
as before.
\end{proof}

\begin{remark}
\label{R:Taelman}
Our approach to Theorem~\ref{T:compute etale in char 0} above
was partially inspired by an alternative approach communicated to us
by Lenny Taelman.
His idea, in place of Proposition~\ref{P:lower bound on Hyp groups},
was to enumerate \'etale \Cech\ cocycles
and compute their images under a comparison isomorphism
\[
	\HH^i(\Xsep,\Z/\ell^n\Z) 
	\to \HH^i(X(\C),\Z/\ell^n\Z)
\]
explicitly (this assumes that given an \'etale morphism $U \to \Xsep$
one can compute compatible triangulations of $U(\C)$ and $X(\C)$).
Eventually a set of cocycles mapping bijectively
onto $\HH^i(X(\C),\Z/\ell^n\Z)$ will be found.
The Galois action could then be computed
by searching for coboundaries representing the difference
of each Galois conjugate of each cocycle 
with some other cocycle in the set.
\end{remark}

\subsection{The Tate conjecture}

See~\cite{Tate1994} for a survey of the relationships between the
following two conjectures and many others.

\begin{conjectureT}[Tate conjecture]
\label{C:Tate}
Assume Setup~\ref{Setup}.
The cycle class homomorphism
\[
	\calZ^p(\Xsep) \tensor \Q_\ell \to V^{\Tate}
\]
is surjective.
\end{conjectureT}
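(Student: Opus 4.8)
This statement is the Tate conjecture in codimension $p$, which remains open in general; what follows is not a proof but an outline of the approaches that succeed in the known cases, together with the obstruction that blocks a general argument.

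Only one direction requires work. Any codimension-$p$ integral subvariety of $\Xsep$ is already defined over a finite extension of $k$ inside $\ksep$, so its class in $V = \HH^{2p}(\Xsep,\Q_\ell(p))$ is fixed by an open subgroup of $G$ and hence lies in $V^{\Tate}$; thus the cycle class map automatically has image in $V^{\Tate}$, and the content is \emph{surjectivity} --- producing algebraic cycles that realize prescribed Tate classes. Every known proof therefore proceeds by \emph{constructing} cycles; there is no purely cohomological ``dimension count'' available, and this is precisely the hard part.

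For $p=0$ and $p=d$ the statement is trivial. The decisive case is $p=1$, i.e.\ the assertion for $(\NS \Xsep) \tensor \Q_\ell$, and the template there is: (i) reduce to abelian varieties, where Tate classes in $\HH^2$ are known to be algebraic --- over finite fields this is Tate's theorem on homomorphisms of abelian varieties, over finitely generated fields of characteristic $0$ it is a theorem of Faltings, with positive-characteristic analogues due to Zarhin and others --- combined with the Künneth summand $\HH^1(A) \tensor \HH^1(A)$ of $\HH^2(A \times A)$ and the identification of its Tate classes with homomorphisms; (ii) embed the relevant part of $\HH^2(X)$ into the $\HH^2$ of an abelian variety functorially attached to $X$, so that algebraic cycles transport back. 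For K3 surfaces the second step is the Kuga--Satake construction, carried out in \cite{Nygaard-Ogus1985}, \cite{Madapusi-preprint}, and the other works cited in the introduction; in general one would need a motivic mechanism producing such an abelian variety, which is not available. An alternative in characteristic $0$ combines the Lefschetz $(1,1)$ theorem over $\C$ with the Mumford--Tate conjecture (to match Hodge classes with Tate classes) and semisimplicity of the monodromy representation, but the Mumford--Tate conjecture is itself open, so this only relocates the difficulty.

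For $p>1$ essentially nothing is known beyond what propagates from $p=1$ via products, hyperplane sections, and correspondences. The natural strategy is to invoke the standard conjectures --- algebraicity of the Künneth projectors and of the Lefschetz-type isomorphisms --- to cut $\HH^{2p}$ into primitive pieces and bootstrap from divisors, but those conjectures are no more accessible than the one at hand. The main obstacle is therefore structural and uniform across all cases: there is no general method for producing an algebraic cycle from a Galois-invariant (or Hodge) cohomology class. Even the supporting facts in the package --- semisimplicity of the $G$-action on $\HH^{2p}(\Xsep,\Q_\ell(p))$, and injectivity of $\Num^p \Xsep \tensor \Q_\ell \to V$ --- are unproved in general, and over finite fields the conjecture is equivalent, via the Artin--Tate formula, to finiteness of the relevant Brauer groups. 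See \cite{Tate1994} for a thorough account of these equivalences and of the cases in which the conjecture has been established.
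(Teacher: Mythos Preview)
Your proposal correctly recognizes that this statement is a \emph{conjecture}, not a theorem: the paper does not prove it either, but states it as Conjecture~$\rmT^p(X,\ell)$ and uses it as a hypothesis in the algorithms of Section~\ref{S:algorithms}. There is nothing to compare, since the paper offers no proof; your survey of the known cases and obstructions is appropriate context but goes well beyond what the paper itself says about the conjecture.
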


\begin{conjectureE}[Numerical equivalence equals homological equivalence]
\label{C:Num=Hom}
Assume Setup~\ref{Setup}.
An element of $\calZ^p(\Xsep)$ is numerically equivalent to $0$
if and only if its class in $V$ is $0$.
\end{conjectureE}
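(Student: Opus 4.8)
The plan is to treat the two implications of the biconditional separately. Only the direction ``class in $V$ is $0$ $\Rightarrow$ numerically equivalent to $0$'' admits an unconditional proof; the reverse direction is essentially Grothendieck's standard conjecture identifying numerical and homological equivalence, and I would not expect to prove it in the generality of Setup~\ref{Setup}.

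\emph{Homologically trivial implies numerically trivial.} This is standard, but I would spell it out as follows. For $z \in \calZ^p(\Xsep)$ and $w \in \calZ^{d-p}(\Xsep)$, the intersection number $z \cdot w \in \Z$ is recovered from cohomology as the image of $\cl(z) \cup \cl(w) \in \HH^{2d}(\Xsep,\Q_\ell(d))$ under the trace isomorphism $\HH^{2d}(\Xsep,\Q_\ell(d)) \isomto \Q_\ell$ of Poincar\'e duality (\cite{SGA4.5}), using the compatibility of the cycle class map with cup products, together with a moving lemma if one wishes to dispense with proper-intersection hypotheses. Hence if the class of $z$ in $V$ vanishes, then $\cl(z) \cup \cl(w) = 0$, so $z \cdot w = 0$ for every $w$; that is exactly the assertion that $z$ is numerically equivalent to $0$.

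\emph{Numerically trivial implies homologically trivial.} Write $A^q$ for the $\Q_\ell$-span of the image of $\calZ^q(\Xsep)$ in $\HH^{2q}(\Xsep,\Q_\ell(q))$. By Poincar\'e duality the pairing $V \times \HH^{2d-2p}(\Xsep,\Q_\ell(d-p)) \to \Q_\ell$ is perfect, and the computation above shows that ``$z$ is numerically trivial'' says precisely that $\cl(z) \in A^p$ is orthogonal to all of $A^{d-p}$. Thus this implication is equivalent to the non-degeneracy on the left of the restricted pairing $A^p \times A^{d-p} \to \Q_\ell$. The natural approach is, given $z$ with $\cl(z) \ne 0$, to produce an \emph{algebraic} cycle $w$ of complementary codimension with $\cl(z) \cup \cl(w) \ne 0$: Poincar\'e duality provides a cohomology class pairing nontrivially with $\cl(z)$, but there is no general mechanism forcing that class to be algebraic, and arranging this amounts to knowing that enough algebraic cycles exist. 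This is the crux, and the reason the assertion is stated as a conjecture rather than a proposition.

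For context, the reverse implication is a theorem for $p=1$---where, using that $\NS \Xsep$ is finitely generated, it reduces to the classical coincidence of numerical and algebraic equivalence of divisors up to torsion together with the fact that an algebraically trivial divisor has trivial $\ell$-adic class---and, when $k$ is finite, it follows from Conjecture~$\rmT^p(X,\ell)$ together with semisimplicity of the $G$-action on $V$. In the applications to follow one presumably assumes Conjecture~$\rmE^p(X,\ell)$ alongside the Tate conjecture rather than proving it.
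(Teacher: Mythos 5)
You correctly recognized that $\rmE^p(X,\ell)$ is stated as a \emph{conjecture} in the paper, not as a theorem, and the paper offers no proof of it. The only thing the paper says about it is Remark~\ref{R:E^1}: the case $p=1$ is known, with a citation to Tate's survey; in all other occurrences (Proposition~\ref{P:consequences}, Theorem~\ref{T:Num}\eqref{I:rank of Num}, Theorem~\ref{T:Num variant}\eqref{I:algorithm B}) the conjecture is taken as a hypothesis. Your analysis is accurate and aligned with the paper's treatment: the direction ``homologically trivial $\Rightarrow$ numerically trivial'' is an unconditional consequence of the compatibility of the cycle class map with cup products and the trace isomorphism; the reverse direction is the crux, being equivalent to nondegeneracy of the intersection pairing restricted to algebraic classes (Grothendieck's standard conjecture~$D$), and no general argument is known. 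Your closing remark that the applications simply assume $\rmE^p(X,\ell)$ alongside the Tate conjecture is exactly what the paper does.
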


\begin{remark}
\label{R:E^1}
Conjecture $\rmE^1(X,\ell)$ holds (see \cite{Tate1994}*{p.~78}).
\end{remark}

Given $(k,X,p,\ell)$ as in Setup~\ref{Setup}, with $k$ finite,
let $V_\mu$ be the largest $G$-invariant subspace of $V$
on which all eigenvalues of the Frobenius are roots of unity.
We have $V^{\Tate} \le V_\mu$.

\begin{proposition}
\label{P:consequences}
Fix $X$, $p$, and $\ell$, and assume Conjecture $\rmE^p(X,\ell)$.
Then the following integers are equal:
\begin{enumerate}[\upshape (a)]
\item\label{I:Z-rank of Num} 
the $\Z$-rank of the $G_k$-module $\Num^p \Xsep$, 
\item\label{I:Z-rank of im Z} 
the $\Z$-rank of the image of $\calZ^p(\Xsep)$ in $V$, and
\item\label{I:Q_l-dim of im Z} 
the $\Q_\ell$-dimension of the image of $\calZ^p(\Xsep) \tensor \Q_\ell$ in $V$.
\end{enumerate}
The integer in \eqref{I:Q_l-dim of im Z} is less than or equal to 
the following equal integers,
\begin{enumerate}[\upshape (a)]
\item[\upshape (d)]\label{I:Q_l-dim of V^Tate}
the $\Q_\ell$-dimension of $V^{\Tate}$ and
\item[\upshape (e)]\label{I:Z_l-rank of M}
the $\Z_\ell$-rank of the $G_k$-module $M$ of Section~\ref{S:stuff},
\end{enumerate}
which, if $k$ is finite, are less than or equal to 
\begin{enumerate}[\upshape (a)]
\item[\upshape (f)]\label{I:Q_l-dim of V_mu}
the $\Q_\ell$-dimension of $V_\mu$.
\end{enumerate}
If moreover, $\rmT^p(X,\ell)$ holds,
then all the integers (including~(f) if $k$ is finite) 
are equal.
Conversely, if (c) equals~(d), then $\rmT^p(X,\ell)$ holds.
\end{proposition}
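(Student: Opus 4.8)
The plan is to establish the chain $(\mathrm a)=(\mathrm b)=(\mathrm c)\le(\mathrm d)=(\mathrm e)\le(\mathrm f)$ first, then upgrade the inequalities under $\rmT^p(X,\ell)$, and finally prove the converse. Write $B\le V$ for the image of $\calZ^p(\Xsep)$ under the cycle class map, so that $\Q_\ell\cdot B\le V$ is the image of $\calZ^p(\Xsep)\tensor\Q_\ell$; similarly let $B'$ be the image of $\calZ^{d-p}(\Xsep)$ in $V'\colonequals \HH^{2(d-p)}(\Xsep,\Q_\ell(d-p))$. I will use the perfect $G$-equivariant cup-product (Poincar\'e duality) pairing $V\times V'\to\HH^{2d}(\Xsep,\Q_\ell(d))=\Q_\ell$ together with the standard fact that under it $\langle\cl(z),\cl(w)\rangle$ equals the intersection number $z\cdot w$.

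Now for the easy part. Equality $(\mathrm a)=(\mathrm b)$: Conjecture $\rmE^p(X,\ell)$ says that the kernel of $\calZ^p(\Xsep)\to V$ (cycles homologically equivalent to $0$) is the subgroup of cycles numerically equivalent to $0$, so $B\isom\Num^p\Xsep$, and in particular the $\Z$-ranks agree. For $(\mathrm b)=(\mathrm c)$: the space $\Q_\ell\cdot B$ is spanned by $\rk_\Z B$ elements, so it is enough to see that a $\Z$-basis of $B$ stays $\Q_\ell$-independent in $V$; this follows once the intersection pairing $B\times B'\to\Z$ has trivial left kernel, since then its matrix has full row rank over $\Q\subseteq\Q_\ell$ and one pairs a putative relation against the rows. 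Triviality of the left kernel is where $\rmE^p$ enters again: if $b\in B$ lifts to $z$ with $z\cdot w=0$ for all $w\in\calZ^{d-p}(\Xsep)$, then $z$ is numerically, hence homologically, equivalent to $0$, so $b=0$. Equality $(\mathrm d)=(\mathrm e)$ is the identity $\dim V^{\Tate}=\rk M=r$ already recorded in Section~\ref{S:stuff} (one has $M\tensor\Q_\ell=V^{\Tate}$ since $T$ is a lattice in $V$). For $(\mathrm c)\le(\mathrm d)$: every $z\in\calZ^p(\Xsep)$ is defined over a finite extension $L$ of $k$, so $\cl(z)$ is fixed by the open subgroup $G_L$ and is a Tate class, whence $\Q_\ell\cdot B\le V^{\Tate}$. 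Finally $(\mathrm d)=(\mathrm e)\le(\mathrm f)$ when $k$ is finite is the inclusion $V^{\Tate}\le V_\mu$ noted before the proposition.

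If moreover $\rmT^p(X,\ell)$ holds, then by definition $\Q_\ell\cdot B=V^{\Tate}$, i.e.\ $(\mathrm c)=(\mathrm d)$, so $(\mathrm a)$--$(\mathrm e)$ all coincide; and conversely, since $\Q_\ell\cdot B\le V^{\Tate}$ always, the equality $(\mathrm c)=(\mathrm d)$ of dimensions forces $\Q_\ell\cdot B=V^{\Tate}$, which is $\rmT^p(X,\ell)$. What is left is, for $k$ finite, the equality $(\mathrm d)=(\mathrm f)$, i.e.\ $V^{\Tate}=V_\mu$; as $V^{\Tate}\le V_\mu$ this is exactly the semisimplicity of Frobenius on $V_\mu$, and the plan is to extract it from $\rmE^p$ by duality. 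Write $(V')^{\Tate}$ for the space of Tate classes in $V'$ (which, as for $V$, contains $\Q_\ell\cdot B'$). Let $\Frob$ topologically generate $G$ and choose $m\ge1$ divisible enough that $V^{\Tate}=\ker(\Frob^m-1\mid V)$, that $(V')^{\Tate}=\ker(\Frob^m-1\mid V')$, and that $\Frob^m$ is unipotent on $V_\mu$ while $\Frob^m-1$ is invertible on a $\Frob^m$-stable complement $V''$ of $V_\mu$ in $V$; set $A\colonequals\Frob^m\mid V$. Since the cup-product pairing is perfect and $G$-equivariant, $\Frob^m$ on $V'$ is the inverse transpose of $A$; a short computation then identifies the annihilator in $V$ of $(V')^{\Tate}$ with $\im(A-1)$, so the left kernel of the pairing restricted to $V^{\Tate}\times(V')^{\Tate}$ is $\ker(A-1)\intersect\im(A-1)$, which vanishes precisely when $A$ is semisimple on $V_\mu$, i.e.\ when $V^{\Tate}=V_\mu$. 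And it does vanish: under $\rmT^p(X,\ell)$ one has $V^{\Tate}=\Q_\ell\cdot B$, while $\Q_\ell\cdot B'\le(V')^{\Tate}$, and the pairing on $(\Q_\ell\cdot B)\times(\Q_\ell\cdot B')$ already has trivial left kernel by the $(\mathrm b)=(\mathrm c)$ argument (which used only $\rmE^p$); enlarging the second factor only shrinks the left kernel. Hence $V^{\Tate}=V_\mu$ and $(\mathrm d)=(\mathrm f)$.

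The main obstacle is this last equality $(\mathrm d)=(\mathrm f)$: it is the only step needing a genuinely new ingredient, namely the semisimplicity of Frobenius on $V_\mu$, and the nontrivial idea is to get it out of $\rmE^p$ by pitting the intersection pairing against Poincar\'e duality rather than invoking Hard Lefschetz or the zeta-function dictionary (alternatively one can quote the relevant semisimplicity statement from \cite{Tate1994}). Everything else reduces to two standard facts: cycle classes are Tate classes, and the cycle class map is compatible with intersection numbers.
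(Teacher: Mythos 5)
Your proof is correct, but it takes a more self-contained route than the paper does. The paper's own proof is essentially two citations: for the equality $(\mathrm b)=(\mathrm c)$ it refers to Tate's survey (\cite{Tate1994}, Lemma~2.5), and for the implication $\rmT^p+\rmE^p\Rightarrow(\mathrm d)=(\mathrm f)$ over a finite field it refers to \cite{Tate1994}, Theorem~2.9, (b)$\Rightarrow$(c); everything else is dismissed as trivial. You instead unpack both of these. For $(\mathrm b)=(\mathrm c)$, your argument via the non-degeneracy (on the left) of the intersection pairing $B\times B'\to\Z$, which you get from $\rmE^p$, is exactly the content of Tate's Lemma~2.5, and your linear-algebra justification (full row rank of the pairing matrix over $\Q$, hence over $\Q_\ell$) is correct. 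For $(\mathrm d)=(\mathrm f)$, your duality argument is sound: choosing $m$ so that $V^{\Tate}=\ker(A-1)$, $(V')^{\Tate}=\ker(A'-1)$ with $A'$ the adjoint inverse of $A\colonequals\Frob^m$, you correctly identify the annihilator of $(V')^{\Tate}$ as $\im(A-1)$, reduce the left kernel of the pairing on $V^{\Tate}\times(V')^{\Tate}$ to $\ker(A-1)\cap\im(A-1)$ inside $V_\mu$, and observe that the vanishing of this intersection forces the nilpotent $A-1|_{V_\mu}$ to be zero, i.e.\ $V^{\Tate}=V_\mu$; and the vanishing itself follows from $\rmT^p$ (which identifies $V^{\Tate}$ with $\Q_\ell\cdot B$) combined with the non-degeneracy from $\rmE^p$ and the inclusion $\Q_\ell\cdot B'\subseteq(V')^{\Tate}$. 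This is in substance Tate's proof of the relevant implication in Theorem~2.9, so what your write-up buys is transparency and independence from the reference; what it costs is length. One small remark: you should make explicit that $B$ is finitely generated (it is, since by $\rmE^p$ it is isomorphic to $\Num^p\Xsep$, which the paper records as a finite-rank free abelian group), since this is what licenses the $\Z$-basis and rank manipulations in your $(\mathrm b)=(\mathrm c)$ step.
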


\begin{proof}
The only nontrivial statements are 
\begin{itemize}
\item 
the equality of \eqref{I:Z-rank of im Z} and~\eqref{I:Q_l-dim of im Z},
which is~\cite{Tate1994}*{Lemma~2.5}, and
\item 
the fact that $\rmT^p(X,\ell)$ and $\rmE^p(X,\ell)$ for $k$ finite
together imply the equality of (d) and~(f);
this follows from \cite{Tate1994}*{Theorem~2.9, (b)$\Rightarrow$(c)}.\qedhere
\end{itemize}
\end{proof}

\section{Algorithms}\label{S:algorithms}

\subsection{Computing rank and torsion of \'etale cohomology}\label{betti}

\begin{proposition}\label{P:zeta}
There is an algorithm that takes as input a nice variety $X$ over $\F_q$,
and returns its zeta function 
\[
	Z_X(T) \colonequals 
	\exp \left( \sum_{n=1}^\infty 
			\frac{\#X(\F_{q^n})}{n} T^n \right)
	\in \Q(T).
\]
\end{proposition}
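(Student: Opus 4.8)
The plan is to reduce the computation to point-counting, which is manifestly effective, together with the rationality of $Z_X(T)$ and a priori bounds on the degrees and coefficient sizes of its numerator and denominator. First I would recall that by the Weil conjectures (Dwork's rationality theorem suffices, but one wants more) we have
\[
	Z_X(T) = \prod_{i=0}^{2d} P_i(T)^{(-1)^{i+1}},
\]
where $d = \dim X$, each $P_i(T) \in \Z[T]$ has constant term $1$, $P_i$ has degree $b_i(X)$ equal to the $i$th $\ell$-adic Betti number, and every reciprocal root of $P_i$ is an algebraic integer all of whose conjugates have absolute value $q^{i/2}$. In particular $P_i(T) = \prod_j (1 - \alpha_{ij} T)$ with $|\alpha_{ij}| = q^{i/2}$, so the coefficient of $T^m$ in $P_i$ is bounded in absolute value by $\binom{b_i}{m} q^{im/2}$.

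The algorithm then runs as follows. First compute an upper bound $B \ge b_i(X)$ for all $i$; one may take $B$ to be any effectively computable bound on the Betti numbers in terms of the defining equations of $X$ (for instance via Proposition~\ref{P:betti}, or more crudely via a Bézout-type estimate), and one does not even need the exact values. Writing $Z_X(T) = N(T)/E(T)$ with $N,E \in \Z[T]$ coprime, constant terms $1$, and $\deg N, \deg E$ bounded explicitly in terms of $B$ and $d$, the coefficients of $N$ and $E$ are likewise bounded explicitly in terms of $B$, $d$, and $q$ by the estimate above. Next, enumerate all pairs $(N,E)$ of such bounded polynomials; for each candidate, expand the rational function $N(T)/E(T)$ as a power series $\sum_{n \ge 0} c_n T^n$ through order $T^{M}$, where $M$ is large enough that agreement through degree $M$ forces equality of rational functions of the bounded degrees in question (e.g. $M = 2(\deg N + \deg E) + 1$ suffices). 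Then compute $\log Z_X(T)$ of this candidate, i.e.\ $\sum_n a_n T^n$, and check whether $a_n = \#X(\F_{q^n})/n$ for $n = 1, \dots, M$, where the values $\#X(\F_{q^n})$ are obtained by brute-force point-counting over $\F_{q^n}$ (feasible since each such field is finite and explicitly constructible, and $X$ is given by explicit equations). Output the first candidate that passes. Correctness: the true $Z_X(T)$ is among the enumerated candidates and passes the test, and any candidate passing the test has the same first $M$ logarithmic coefficients and hence, being a rational function of controlled degree, equals $Z_X(T)$.

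The main obstacle is purely bookkeeping: one must pin down an explicit, correct a priori bound on the degrees \emph{and} on the heights of the numerator and denominator of $Z_X(T)$ so that the search space is genuinely finite, and one must choose the truncation order $M$ large enough that matching finitely many coefficients is conclusive. Both follow from the Weil conjectures as recalled above (the integrality and the archimedean bounds on the $\alpha_{ij}$ giving the height bound, and a computable bound on the $b_i$ giving the degree bound), so no further input is needed; the only subtlety is to state these bounds in a self-contained effective form rather than invoking them qualitatively. Everything else---constructing $\F_{q^n}$, counting points, manipulating polynomials and power series---is elementary and algorithmic.
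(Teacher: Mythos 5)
Your proof is correct in substance, but it takes a noticeably more roundabout route than the paper's, and it relies on more input than is actually needed. The paper's proof is a two-liner: obtain from Katz's theorem (\cite{Katz2001}, Corollary of Theorem~3) a computable bound $B$ on $\sum_i b_i(X)$; observe that $Z_X(T)$ is a rational function whose numerator and denominator degrees sum to at most $B$; compute $\#X(\F_{q^n})$ for $n \le 2B$; then recover $Z_X(T)$ directly by linear algebra (Pad\'e approximation), since a rational function of total degree at most $B$ is determined by its Taylor expansion modulo $T^{2B+1}$. No enumeration of candidate rational functions, no height bounds, and in particular no appeal to integrality or to the Riemann hypothesis part of the Weil conjectures is required --- rationality (Dwork) plus a degree bound suffices.

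By contrast, your argument invokes the full Weil conjectures to bound not just the degrees but also the archimedean sizes of the coefficients of the numerator and denominator, so that you can set up a finite brute-force search over all candidate pairs $(N,E)$ and then test each against point counts. This works, but the search-and-check structure is unnecessary: once you know the degree bound, $2B$ Taylor coefficients determine the rational function uniquely, and you can solve for it directly rather than enumerate. The coefficient bounds and integrality play no role and only inflate the argument.

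One concrete issue worth flagging: you suggest obtaining the Betti-number bound ``via Proposition~\ref{P:betti}.'' That is circular --- the paper's proof of Proposition~\ref{P:betti} itself invokes Proposition~\ref{P:zeta}. You do hedge by mentioning a ``B\'ezout-type estimate'' as an alternative, and indeed that is the right instinct: one needs an a priori bound depending only on the defining equations, which is precisely what the cited result of Katz provides. You should cite such a result explicitly rather than leave it vague, since the whole algorithm hinges on the bound being effectively computable before any point counting begins.
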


\begin{proof}
{}From \cite{Katz2001}*{Corollary of Theorem~3},
we obtain an upper bound $B$ on 
the sum of the $\ell$-adic Betti numbers $b_i(X)$.
Then $Z_X(T)$ is a rational function of degree at most $B$.
Compute $\#X(\F_{q^n})$ for $n \in \{1,2,\ldots,2B\}$;
this determines the mod $T^{2B+1}$ Taylor expansion of $Z_X(T)$,
which is enough to determine $Z_X(T)$.
\end{proof}

\begin{proposition}\label{P:betti}
There is an algorithm that 
takes as input a finitely generated field $k$ and a nice variety $X$ over $k$,
and returns $b_0(X), \dots, b_{2\dim X}(X)$. 
\end{proposition}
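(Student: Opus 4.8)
The plan is to reduce the computation of the Betti numbers $b_i(X)$ to a finite-field computation via spreading out, so that Proposition~\ref{P:zeta} can be applied. First I would choose a finitely generated $\Z$-subalgebra $A \subseteq k$ over which $X$ has a model: concretely, using the explicit representation of $k$ as $\Frac A_0$ for some finitely generated domain $A_0$ (Section~\ref{S:explicit}), I clear denominators in the coefficients of the defining equations of $X \subseteq \PP^N_k$ to obtain a projective scheme $\calX \to \Spec A$ for a suitable localization $A = A_0[1/f]$. By generic smoothness and openness of the relevant loci, after further inverting an element of $A$ I may assume $\calX \to \Spec A$ is smooth, projective, with geometrically integral fibers, and with $A$ smooth over $\Z$; in particular $\Spec A$ has a closed point with finite residue field $\F_q$ of characteristic $p$ different from any finite set of primes I wish to avoid. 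The smooth proper base change theorem then gives, for any prime $\ell \ne p$, an isomorphism $\HH^i_{\et}(\Xbar,\Z/\ell\Z) \isom \HH^i_{\et}(\overline{\calX_{\F_q}},\Z/\ell\Z)$, hence $b_i(X) = b_i(\calX_{\F_q})$.

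Next I would extract the Betti numbers of the finite-field fiber from its zeta function. By Proposition~\ref{P:zeta} we can compute $Z_{\calX_{\F_q}}(T) \in \Q(T)$. Writing it in lowest terms as $\prod_{i} P_i(T)^{(-1)^{i+1}}$ is not something we can do directly from the rational function alone, since cancellation between numerator and denominator is possible; so instead I would compute the zeta functions $Z_{\calX_{\F_{q^m}}}(T)$ for the fibers over a few finite extensions and use the Weil conjectures (purity: the reciprocal roots of $P_i$ have absolute value $q^{i/2}$) to separate the contributions by weight. Concretely, once we know $\#\calX_{\F_q}(\F_{q^m})$ for $m = 1,\dots, 2B$ (with $B$ the Katz bound), we recover the multiset of eigenvalues $\{\alpha_{i,j}\}$ with multiplicity of the total Frobenius action on $\bigoplus_i \HH^i$, and purity tells us that $b_i = \#\{(i,j) : |\alpha_{i,j}| = q^{i/2}\}$; there is no ambiguity because eigenvalues of different weights have different absolute values.

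There is a subtlety I should address: the zeta function only determines the alternating structure, and in principle an eigenvalue of weight $i$ could have absolute value equal to an eigenvalue of a different weight $i'$ only if $q^{i/2} = q^{i'/2}$, which forces $i = i'$ since $q > 1$; so purity genuinely does the separation. The remaining point is that to read off absolute values of algebraic numbers from finitely much data I would not try to compute the $\alpha_{i,j}$ exactly, but rather note that $\sum_j \alpha_{i,j}^m = q^{im} \cdot (\text{bounded})$ behavior pins down the $b_i$ once enough power sums are known; alternatively, factor the integer polynomials $P_i(T)$ (obtained by the standard algorithm from the power sums $N_m = \sum_i (-1)^i \sum_j \alpha_{i,j}^m$ together with the Weil bounds as in Proposition~\ref{P:zeta}'s proof) over $\Q$ and test each reciprocal root's archimedean absolute value against $q^{i/2}$, which is an exact computation with algebraic numbers.

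The main obstacle I expect is not the finite-field step — that is essentially Proposition~\ref{P:zeta} plus the Weil conjectures — but rather the spreading-out step, specifically verifying effectively that after inverting finitely many elements of $A$ the morphism $\calX \to \Spec A$ is smooth with geometrically integral fibers and that some closed point has finite residue field of characteristic avoiding $\ell$ (and any bad primes). Making the choice of $A$ and the closed point algorithmic requires: (i) an effective Noether-normalization / generic-flatness argument to trim $\Spec A$ so the fibers behave uniformly; (ii) the observation that a finitely generated $\Z$-algebra domain of positive Krull dimension has closed points of arbitrarily large residue characteristic (so we can dodge $\ell$ and finitely many bad primes), while if $k$ has characteristic $0$ then $A$ is flat over $\Z$ of relative dimension $\trdeg(k/\Q)$ and we simply reduce mod a large prime, and if $k$ has characteristic $p$ then $X$ is already in positive characteristic and we can specialize the transcendental parameters to $\Fbar_p$-values directly. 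Once $A$ and the closed point are in hand, smooth proper base change is a black box and the rest is bookkeeping; I would present the reduction cleanly and cite \cite{MilneEtaleCohomology1980} or \cite{SGA4.5} for base change and the Weil conjectures, and Proposition~\ref{P:zeta} for the zeta function.
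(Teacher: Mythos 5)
Your approach is essentially the same as the paper's: spread out $X$ to a smooth projective scheme over a finitely generated $\Z$-algebra, reduce to a closed point with finite residue field (smooth proper base change preserves the $b_i$), compute the zeta function via Proposition~\ref{P:zeta}, and use purity from the Weil conjectures to sort Frobenius eigenvalues by archimedean absolute value. Your worry about cancellation between numerator and denominator is in fact moot: purity guarantees that no reciprocal root of odd cohomological weight can share an absolute value with one of even weight (since $q^{i/2}=q^{j/2}$ forces $i=j$), so $Z_X(T)$ in lowest terms already factors as $\prod_i P_i(T)^{(-1)^{i+1}}$, which is why the paper simply reads $b_i$ off the Newton polygon of the numerator or denominator without further work.
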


\begin{proof}
First assume that $k=\F_q$.
Using Proposition~\ref{P:zeta}, we compute 
the zeta function $Z_X(T)$.
For each $i$, the Betti number $b_i(X)$ 
equals the number of complex poles of $Z_X(T)^{(-1)^i}$ with absolute value $q^{-i/2}$, counted with multiplicity;
this can be read off from the Newton polygon of the numerator or denominator of $Z_X(T)$.

In the general case, we spread out $X$
to a smooth projective scheme $\calX$ 
over a finitely presented $\Z$-algebra $R = \Z[x_1,\ldots,x_n]/(f_1,\ldots,f_m)$.
Search for a finite field $\F$ and a point $a \in \F^n$
satisfying $f_1(a)=\cdots=f_m(a)=0$;
eventually we will succeed; then $\F$ is an explicit $R$-algebra.
Set $\calX_\F = \calX \times_R \F$.
Standard specialization theorems (e.g., \cite{SGA4.5}*{V,~Th\'eor\`eme~3.1})
imply that $b_i(X)=b_i(\calX_\F)$ for all $i$, so we reduce to the case of the previous paragraph.
\end{proof}

The following statement and proof were suggested by Olivier Wittenberg.

\begin{proposition}
\label{P:torsion computable}
Assume Hypothesis~\ref{H:compute etale 2}. 
There is an algorithm that takes as input $(k,X,\ell)$ 
as in Setup~\ref{Setup} and an integer $i$ 
and returns a finite group that is isomorphic to $\HH^i(\Xsep,\Z_\ell)_\tors$.
\end{proposition}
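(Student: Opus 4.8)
The plan is to compute the torsion of $\HH^i(\Xsep,\Z_\ell)$ by detecting it through the finite groups $\HH^i(\Xsep,\Z/\ell^n\Z)$ and $\HH^{i-1}(\Xsep,\Z/\ell^n\Z)$, which are computable by Hypothesis~\ref{H:compute etale 2}, and by locating the value of $n$ beyond which the torsion has stabilized. Write $H^j \colonequals \HH^j(\Xsep,\Z_\ell)$, a finitely generated $\Z_\ell$-module, so $H^j \isom \Z_\ell^{b_j(X)} \oplus (H^j)_\tors$ with $(H^j)_\tors$ finite. The Betti numbers $b_j(X)$ are computable by Proposition~\ref{P:betti}. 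Lemma~\ref{L:Kummer} (with $p=0$) gives, for every $n$, a short exact sequence
\[
	0 \to H^i/\ell^n H^i \to \HH^i(\Xsep,\Z/\ell^n\Z) \to H^{i+1}[\ell^n] \to 0,
\]
and the analogous sequence one degree down expresses $\HH^{i-1}(\Xsep,\Z/\ell^n\Z)$ in terms of $H^{i-1}$ and $H^i[\ell^n]$. From these, $\#\HH^i(\Xsep,\Z/\ell^n\Z) = \#(H^i/\ell^n H^i)\cdot \#H^{i+1}[\ell^n]$, and for $n$ large enough that $\ell^n$ kills all the (finitely many) torsion subgroups in play, one has $\#(H^i/\ell^n H^i) = \ell^{n\, b_i(X)}\cdot \#(H^i)_\tors$ and $\#H^{i+1}[\ell^n] = \#(H^{i+1})_\tors$, so $\#\HH^i(\Xsep,\Z/\ell^n\Z) = \ell^{n\, b_i(X)}\cdot \#(H^i)_\tors \cdot \#(H^{i+1})_\tors$. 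Thus the combination
\[
	\frac{\#\HH^i(\Xsep,\Z/\ell^n\Z)}{\ell^{n\, b_i(X)}}
\]
is eventually constant in $n$ and equals $\#(H^i)_\tors \cdot \#(H^{i+1})_\tors$. Running the same construction with $i$ replaced by $i+1$ yields $\#(H^{i+1})_\tors\cdot\#(H^{i+2})_\tors$; but we need to disentangle a single $\#(H^i)_\tors$, so I would instead form an alternating telescoping product over $j \ge i$ — the groups $H^j$ vanish for $j > 2\dim X$, so the product is finite — to solve for $\#(H^i)_\tors$ individually. Concretely, $\prod_{j\ge i}\bigl(\#\HH^j(\Xsep,\Z/\ell^n\Z)/\ell^{n\,b_j(X)}\bigr)^{(-1)^{j-i}}$ is eventually equal to $\#(H^i)_\tors$.

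The subtlety is that the algorithm cannot directly see \emph{when} $n$ is large enough: the ratio above converges to its limiting value only after $\ell^n$ exceeds the exponents of the relevant torsion groups, and a priori we have no effective bound. The standard resolution is to compute the ratio $R_n$ for $n=1,2,3,\dots$ and wait until it stabilizes; one must argue that stabilization is \emph{detectable}, i.e., that once two consecutive values agree the value is correct. This holds because each factor $\#(H^j/\ell^n H^j)/\ell^{n\,b_j(X)} = \#\bigl((H^j)_\tors/\ell^n (H^j)_\tors\bigr)\cdot \#(H^{j+1})_\tors / \#\bigl((H^{j+1})_\tors[\ell^n]\bigr)$ — wait, more cleanly: $\#(H^j/\ell^n H^j) = \ell^{n b_j}\cdot \#\bigl((H^j)_\tors/\ell^n(H^j)_\tors\bigr)$ and $\#(H^{j+1}[\ell^n]) = \#\bigl((H^{j+1})_\tors[\ell^n]\bigr)$, and for a fixed finite abelian $\ell$-group $A$ both $\#(A/\ell^n A)$ and $\#A[\ell^n]$ are nondecreasing in $n$ and reach $\#A$ once $\ell^n$ annihilates $A$, and are strictly smaller before that. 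Hence $R_{n+1}=R_n$ forces, exponent by exponent, that $\ell^n$ already annihilates every torsion group appearing, so the common value is the true one. This monotonicity argument is the one genuinely necessary ingredient and is where care is needed.

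So the algorithm is: (1) compute all Betti numbers $b_j(X)$ via Proposition~\ref{P:betti}; (2) for $n=1,2,\dots$, compute the finite groups $\HH^j(\Xsep,\Z/\ell^n\Z)$ for $i \le j \le 2\dim X + 1$ via Hypothesis~\ref{H:compute etale 2}, and form $R_n \colonequals \prod_{j\ge i}\bigl(\#\HH^j(\Xsep,\Z/\ell^n\Z)\,\ell^{-n\,b_j(X)}\bigr)^{(-1)^{j-i}}$; (3) return $R_n$ as soon as $R_{n+1}=R_n$. The output is the \emph{order} of $(H^i)_\tors$; since the problem asks only for a finite group isomorphic to $(H^i)_\tors$, one additionally needs its group structure, which can be recovered the same way by tracking, for each $n$, the elementary divisors of the finite abelian groups $H^j/\ell^n H^j$ extracted from the exact sequences of Lemma~\ref{L:Kummer} (whose outer terms we now know completely once the orders have stabilized), and reading off the $\ell$-adic elementary divisors of $(H^i)_\tors$ once these too stabilize under the same monotonicity principle. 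I expect the main obstacle to be purely bookkeeping: setting up the telescoping product cleanly and verifying the monotone-stabilization claim factor by factor so that termination is genuinely detectable.
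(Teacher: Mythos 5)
Your approach is essentially the same as the paper's: compute the Betti numbers, use Lemma~\ref{L:Kummer} together with the vanishing of $\HH^j(\Xsep,\Z_\ell)$ outside $0\le j\le 2\dim X$ to solve for $a_{i,n}\colonequals \#\HH^i(\Xsep,\Z_\ell)_\tors[\ell^n]$, and iterate $n$ until this stabilizes. Your telescoping product is exactly the closed-form solution of the ascending/descending induction the paper performs, and it is worth noting that it equals $a_{i,n}$ \emph{exactly} for every $n$ (not just eventually), since $\#\HH^j(\Xsep,\Z/\ell^n\Z)/\ell^{nb_j}=a_{j,n}a_{j+1,n}$ holds on the nose; stabilization $R_{n+1}=R_n$ then forces $\ell^n$ to kill all of $\HH^i(\Xsep,\Z_\ell)_\tors$, exactly as you argue. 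The one place where your write-up is weaker than the paper's is the final step of recovering the group structure: going back through the exact sequences of Lemma~\ref{L:Kummer} is problematic because they need not split, so knowing the (orders or even structures of the) outer terms does not pin down the middle one or its pieces. The clean fix, and what the paper does, is to observe that the integers $R_1,\ldots,R_N$ you have already computed \emph{are} the orders $\#\HH^i(\Xsep,\Z_\ell)_\tors[\ell^n]$, and these alone determine the structure: if $\HH^i(\Xsep,\Z_\ell)_\tors\isom\Directsum_{n\ge 1}(\Z/\ell^n\Z)^{r_n}$, then $\ell^{r_n}=R_n^2/(R_{n-1}R_{n+1})$, with $R_0=1$. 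Substituting this for your last paragraph closes the gap.
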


\begin{proof}
For each $j$, let $\HH^j \colonequals \HH^j(\Xsep,\Z_\ell)$.
For integers $j,n$ with $n \ge 0$,
let $a_{j,n} \colonequals \# \HH^j[\ell^n]$ and 
$b_j \colonequals b_j(X)=\dim_{\Q_\ell}(\HH^j \otimes_{\Z_\ell} \Q_{\ell})$.
Since $\HH^j_\tors$ is finite, 
$\# \HH^j_\tors/\ell^n \HH^j_\tors = \# \HH^j_\tors[\ell^n] = a_{j,n}$, so 
$\#\HH^j/\ell^n\HH^j = \ell^{nb_j} \cdot a_{j,n}$. 
From Lemma \ref{L:Kummer} we find 
\begin{equation}\label{E:ajn}
\# \HH^j(\Xsep,\Z/\ell^n\Z) = \#\left(\HH^j/\ell^n\HH^j\right) \cdot \#\left( \HH^{j+1}[\ell^n]\right) 
   = \ell^{nb_j} \cdot a_{j,n} \cdot a_{j+1,n}.
\end{equation}
The left side is computable by 
Hypothesis~\ref{H:compute etale 2},
and $b_j$ is computable by Proposition~\ref{P:betti}.
Since $a_{j,n}=1$ for $j<0$ and for $j>2\dim X$, 
for any given $n$, we can use \eqref{E:ajn} to compute $a_{j,n}$ 
for all $j$, by ascending or descending induction.
Compute
$$
1=a_{i,0} \leq a_{i,1} \leq a_{i,2} \leq \dots \leq a_{i,N} \leq a_{i,N+1}
$$ 
until $a_{i,N} = a_{i,N+1}$. 
Then $\HH^i_\tors$ has exponent $\ell^N$ and $\HH^i_\tors$ is isomorphic 
to $\Directsum_{n=1}^N (\Z/\ell^n\Z)^{r_n}$ with $r_n$ such that 
$\ell^{r_n} a_{i,n-1}a_{i,n+1}= a_{i,n}^2$. 
\end{proof}

\begin{remark}
The proof of Proposition~\ref{P:torsion computable} did not require
the full strength of Hypothesis~\ref{H:compute etale 2}:
computability of the group $\HH^j(\Xsep,\Z/\ell^n\Z)$ 
for all $j<i$ or for all $j \ge i$ would have sufficed.
\end{remark}

\begin{remark}
\label{R:lifts to char 0}
If $k$ is of characteristic~$0$ (or $X$ lifts to characteristic~$0$), 
then combining Theorem~\ref{T:compute etale in char 0}
(or Corollary~\ref{C:compute etale by lifting})
with Proposition~\ref{P:torsion computable}
lets us compute the group $\HH^i(\Xsep,\Z_\ell)_\tors$ unconditionally.
\end{remark}

\subsection{Computing \texorpdfstring{$\Num^p \Xsep$}{Num p Xsep}}

Throughout this section, we assume Setup~\ref{Setup}.

\begin{lemma}
\label{L:computing intersection number}
There is an algorithm that takes as input 
$k$, $p$, $X$, and cycles $y \in \calZ^p(X)$ and $z \in \calZ^{d-p}(X)$,
and returns the intersection number $y.z$.
\end{lemma}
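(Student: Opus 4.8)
The plan is to reduce the computation of the intersection number $z.y$ to finitely many computations of degrees of zero-cycles, which in turn reduce to computing dimensions (over a field) of coordinate rings of explicitly presented zero-dimensional schemes. First I would recall that it suffices to treat the case where $z$ and $y$ are prime cycles, i.e.\ integral closed subvarieties $Z$ and $Y$ of complementary dimensions $p$ and $d-p$; the general case follows by bilinearity, and we can read off the prime components of an input cycle from its given integer combination of integral subvarieties. Since $X$ is nice (in particular smooth over $k$), the intersection number $Z.Y$ is defined, and it can be computed by moving one of the cycles: by a Bertini/Chow-type moving lemma on a smooth projective variety, after replacing $Y$ by a rationally equivalent cycle $Y'$ (or after a suitable generic linear change of coordinates applied to a projective embedding), $Z$ and $Y'$ meet properly, i.e.\ in dimension $0$, and then $Z.Y' = \sum_{P} \ell_{\calO_{X,P}}(\calO_{Z\cap Y',P})$, the sum of local intersection multiplicities over the finitely many points $P$ of $Z\cap Y'$.

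The key steps, in order, are: (1) Fix the given projective embedding $X\hookrightarrow \PP^N_k$, with homogeneous ideals for $X$, $Z$, $Y$ available. (2) Search over generic elements $g\in \PGL_{N+1}$ defined over finite extensions of $k$ (enumerable since $k$ is finitely generated) until $Z$ and $g\cdot Y$ have intersection of dimension $0$; dimension of an explicitly presented projective scheme is computable via Gröbner bases, so this search terminates because a generic translate works. (3) Justify that this replacement does not change the intersection number: $g\cdot Y$ is algebraically (hence for a connected group, rationally) equivalent to $Y$ inside $\PP^N$, and intersection numbers on the smooth projective $X$ depend only on the rational equivalence classes; alternatively, invoke that $Z.Y$ can be computed as the degree of the zero-cycle cut out after such a generic motion, independently of the choice. (4) Compute the length-weighted degree: decompose $Z\cap g\cdot Y$ into points (a zero-dimensional scheme over $k$, possibly with residue extensions, handled by primary decomposition or simply by computing the $k$-vector-space dimension of the global coordinate ring), and return that integer, which equals $Z.Y$ by the smoothness of $X$ (so that Serre's Tor-correction terms vanish for a proper intersection on a smooth variety — $\calO_X$ is regular, so $\mathrm{Tor}^{\calO_X}_i(\calO_Z,\calO_Y)=0$ for $i>0$ when the intersection is proper, by the classical vanishing for regular local rings in the proper case actually one must be slightly careful: properness plus Cohen--Macaulayness of $\calO_Z$ or $\calO_Y$ suffices; since we are free to choose the moving so that, say, $g\cdot Y$ is cut out by a regular sequence locally — e.g.\ a complete intersection of general hyperplanes if $Y$ is a linear space, or more generally after a further moving lemma — the higher Tor's vanish). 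A cleaner route for step (4): use the projection formula to reduce to $X=\PP^N$ and compute via Hilbert polynomials, or reduce $Y$ to an intersection of $d-p$ general hyperplane sections of $X$ when $Y$ is rationally equivalent to a multiple of such, after first expressing $[Y]$ in $\NS$ or $\calZ^{d-p}$ terms — but this last reduction is not always available for $p>1$, so I would not rely on it in general.

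The main obstacle I expect is step (3)/(4): rigorously certifying, within the algorithm, that the moved intersection is both proper \emph{and} free of higher-Tor corrections, so that the naive length count equals the true intersection number. On a smooth projective variety the moving lemma (Chow's lemma on rational equivalence of cycles modulo rational equivalence, or the classical moving lemma for smooth quasiprojective varieties over an infinite field) guarantees that a generic translate meets $Z$ properly and that the resulting zero-cycle has the correct class; the subtlety is that a \emph{proper} intersection on a smooth variety still requires that one of the two cycles be Cohen--Macaulay (or that one reduce to the case of intersecting with a Cartier divisor iteratively) for the multiplicity to be just the length. I would handle this by performing the moving in stages, intersecting $Z$ successively with $d-p$ general hyperplane sections of $X$ (reducing $Z$ to a zero-cycle $Z'$ of degree $Z.(\text{hyperplane})^{d-p}$-type data is wrong; rather, intersect $Y$ itself, or use the presentation of $Y$ by its ideal and a Noether-normalization-style generic projection) so that at each stage we intersect with a single effective Cartier divisor, where the length formula is unconditional; the total intersection number is then the product/sum read off at the end. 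The bookkeeping to make this fully algorithmic — choosing the generic linear forms over a computable field, verifying transversality at each stage via dimension counts, and aggregating — is routine but is where the real work lies; everything else (Gröbner-basis dimension computations, enumeration of finite extensions of $k$, vector-space dimensions of Artinian quotients) is standard computer algebra and has been implicitly invoked already in this section.
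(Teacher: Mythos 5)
Your high-level strategy --- reduce to integral cycles by bilinearity, move one cycle into proper position, then compute the zero-cycle degree via Gr\"obner bases --- is the same as the paper's. But your specific moving mechanism is flawed: applying a generic $g \in \PGL_{N+1}$ sends $Y \subseteq X \subseteq \PP^N$ to a subvariety $g \cdot Y$ of $g \cdot X$, \emph{not} of $X$, so $Z \cap (g \cdot Y)$ is not an intersection of cycles on $X$ and its degree has no direct relation to $z.y$. You sense this tension, and your fallback (moving in stages by hyperplane sections of $X$) only works when $y$ lies in the subring generated by the hyperplane class, which, as you acknowledge, fails in general. The paper sidesteps all of this with a much simpler device: it does not try to \emph{construct} the moved cycle at all, but merely \emph{searches} (by enumerating finitely presented rational equivalences over $k$) for some $p$-cycle $z'$ rationally equivalent to $z$ on $X$ whose support meets that of $y$ properly. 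The Chow moving lemma for smooth quasi-projective varieties guarantees such a $z'$ exists, so the search terminates; the algorithm never needs to know \emph{how} to produce it. This ``search for a certificate rather than construct it'' pattern is used throughout the paper (cf.\ Lemmas~\ref{L:independent in Num} and~\ref{L:algebraically equivalent to 0}), and it is the key idea your proposal is missing.

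On the other hand, your concern about higher $\mathrm{Tor}$ corrections is a genuinely sharp observation: for a proper intersection of integral subvarieties of complementary codimension in a smooth variety, the naive length $\ell(\calO_{Z \cap Y', P})$ can strictly exceed the Serre intersection multiplicity when neither side is Cohen--Macaulay, so ``the degree of the scheme-theoretic intersection'' must be read as Serre's alternating sum $\sum_i (-1)^i \ell(\mathrm{Tor}_i^{\calO_{X}}(\calO_{Z},\calO_{Y'}))$ rather than the $i=0$ term alone. This is, however, not an obstruction to the algorithm: one computes a finite free resolution of $\calO_{Z}$ over the (regular) coordinate ring by Gr\"obner bases (Schreyer's algorithm), tensors with $\calO_{Y'}$, and takes lengths of the homology at each of the finitely many intersection points. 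Your proposed remedy of forcing the moved cycle to be a local complete intersection is unnecessary once you allow yourself the full $\mathrm{Tor}$ formula, and the paper's terse ``use Gr\"obner bases'' should be understood in this sense.
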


\begin{proof}
First, if $y$ and $z$ are integral cycles intersecting transversely,
use Gr\"obner bases to compute the degree of their intersection.
If $y$ and $z$ are arbitrary cycles whose supports intersect transversely,
use bilinearity to reduce to the previous sentence.
In general, search for a rational equivalence between $y$
and another $p$-cycle $y'$ such that the supports of $y'$ and $z$
intersect transversely;
eventually $y'$ will be found; 
then apply the previous sentence to compute $y'.z$.
\end{proof}

\begin{remark}
It should be possible to make the algorithm in the proof of 
Lemma~\ref{L:computing intersection number} much more efficient, 
by following a proof of Chow's moving lemma 
instead of finding $y'$ by brute force enumeration.
\end{remark}

\begin{remark}
Alternatively, if $y$ and $z$ are integral cycles of complementary dimension 
that do not necessarily intersect properly,
their structure sheaves $\OO_y$ and $\OO_z$ 
admit resolutions $\FF^\bullet$ and $\GG^\bullet$ 
(complexes of locally free $\OO_X$-modules),
and then 
\begin{equation}
\label{E:intersection using resolution}
	y.z = \sum_{i,j \ge 0} (-1)^{i+j} \chi(\FF^i \tensor \GG^j);
\end{equation}
this should lead to another algorithm.
(Formula~\eqref{E:intersection using resolution} can be explained as follows:
Replace $y$ and $z$ by rationally equivalent cycles $y'$ and $z'$ 
that intersect transversely; then, in $K(X)$,
\begin{align*}
	\cl(\OO_{y'} \tensor \OO_{z'})
	&= \cl(\OO_{y'}) \cl(\OO_{z'})  \quad \textup{(by \cite{SGA6}*{p.~49, Proposition~2.7})} \\
	&= \cl(\OO_y) \cl(\OO_z) \quad \textup{(by \cite{SGA6}*{p.~59, Corollaire~1}, using $\dim y + \dim z = d$)} \\
	&= \sum_{i \ge 0} (-1)^i \cl(\FF^i) \sum_{j \ge 0} (-1)^j \cl(\GG^j) \\
	&= \sum_{i,j \ge 0} (-1)^{i+j} \cl(\FF^i \tensor \GG^j) \quad \textup{(by \cite{SGA6}*{p.~49, (2.15~bis)}).}
\end{align*}
Since $\OO_{y'} \tensor \OO_{z'}$ is a direct sum of skyscraper sheaves,
applying $\chi \colon K(X) \to \Z$ yields $y'.z'$ on the left,
which equals $y.z$.)

Similarly, one could prove the simpler but asymmetric formula
$y.z = \sum_{i \ge 0} (-1)^i \chi(\FF^i \tensor \OO_z)$.
\end{remark}

The following lemma describes a decision problem 
for which we do not have an algorithm that always terminates,
but only a \emph{one-sided} test,
i.e., an algorithm that halts if the answer is YES,
but runs forever without reaching a conclusion if the answer is NO.

\begin{lemma}
\label{L:independent in Num}
There is an algorithm that takes as input 
$k$, $p$, $X$, a finite extension $L$ of $k$, 
and a finite list of cycles $z_1,\ldots,z_s \in \calZ^p(X_L)$,
and halts if and only if the images of $z_1,\ldots,z_s$ in $\Num^p \Xbar$
are $\Z$-independent.
\end{lemma}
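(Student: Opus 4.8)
The plan is to reduce $\Z$-independence in $\Num^p \Xbar$ to a numerical condition that can be certified by exhibiting finitely many auxiliary cycles, and then to search for such a certificate. Independence of $z_1,\ldots,z_s$ in $\Num^p \Xbar$ means exactly that there exist $(d-p)$-cycles $y_1,\ldots,y_s$ on $\Xbar$ such that the $s \times s$ intersection matrix $(z_i . y_j)$ is nonsingular; indeed, if the $z_i$ are independent in the finite-rank free abelian group $\Num^p \Xbar$, then by nondegeneracy of the numerical pairing between $\Num^p \Xbar$ and $\Num^{d-p} \Xbar$ (which is perfect up to torsion, but $\Num^{d-p}\Xbar$ is torsion-free) one can find such $y_j$, and conversely a nonsingular matrix $(z_i.y_j)$ forces any nontrivial relation $\sum a_i z_i \equiv 0$ to give $\sum_i a_i (z_i.y_j) = 0$ for all $j$, hence all $a_i = 0$. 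So the target condition is: there exist $y_1,\ldots,y_s \in \calZ^{d-p}(X_{L'})$ for some finite extension $L'$ of $L$ with $\det(z_i.y_j) \ne 0$.

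The algorithm is then a straightforward enumeration. Enumerate all finite extensions $L'$ of $L$ (each given by generators and relations), and for each, enumerate all $s$-tuples $(y_1,\ldots,y_s)$ of codimension-$(d-p)$ cycles on $X_{L'}$, each $y_j$ being a finite integer combination of codimension-$(d-p)$ integral subvarieties as in Section~\ref{S:explicit}. For each such tuple, use Lemma~\ref{L:computing intersection number} (applied over the field $L'$, after base change of the $z_i$) to compute the integer matrix $(z_i . y_j)_{1\le i,j\le s}$, and test whether its determinant is nonzero. Halt and output YES the first time a nonsingular matrix is found. By the previous paragraph, such a tuple exists over some finite extension of $L$ if and only if $z_1,\ldots,z_s$ are $\Z$-independent in $\Num^p \Xbar$, so the search halts precisely in that case. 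Here I use that intersection numbers are insensitive to further field extension, so computing $z_i.y_j$ over $L'$ rather than over an algebraic closure is harmless, and that $\Num^p$ and $\Num^{d-p}$ are finite-rank free abelian groups (Section~\ref{S:notation}), so a finite collection of $y_j$'s defined over a finite extension already suffices.

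The main subtlety — and where the one-sidedness is unavoidable — is that there is no a priori bound on the degree of the field $L'$ or on the complexity of the cycles $y_j$ needed to witness independence, so the NO case cannot be detected: if the $z_i$ are dependent, every matrix $(z_i.y_j)$ is singular and the search runs forever. A secondary point to check is that Lemma~\ref{L:computing intersection number} as stated computes $z.y$ for $z \in \calZ^p(X)$ and $y \in \calZ^{d-p}(X)$ over the ground field, so one applies it with $X$ replaced by $X_{L'}$; the cycles $z_i$, originally on $X_L$, are pushed forward along $X_{L'} \to X_L$ or simply viewed via the cycle-pullback, which does not change their numerical class on $\Xbar$. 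With these remarks the verification is routine.
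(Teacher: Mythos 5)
Your proposal is correct and follows essentially the same route as the paper's proof: reduce $\Z$-independence in $\Num^p\Xbar$ to the existence of complementary-codimension cycles $y_1,\dots,y_s$ over some finite extension with $\det(z_i.y_j)\ne 0$, then enumerate candidate tuples and intersection matrices until a nonsingular one is found. The only cosmetic difference is that the paper enumerates over finite \emph{separable} extensions of $L$ (which suffices since $\Num^{d-p}\Xsep$ has finite index in $\Num^{d-p}\Xbar$ by Proposition~\ref{P:Num X}), whereas you allow arbitrary finite extensions; both are valid.
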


\begin{proof}
Enumerate $s$-tuples $(y_1,\ldots,y_s)$ of elements of $\calZ^{d-p}(X_{L'})$
as $L'$ ranges over finite extensions of $L$.
As each $s$-tuple is computed, compute also 
the intersection numbers $y_i.z_j \in \Z$
and halt if $\det(y_i.z_j) \ne 0$.
\end{proof}

\begin{remark}
If $p=1$ and $d=2$, and $h$ is any ample divisor on $X$,
and $z$ is an integer combination of the $h$ and the $z_i$,
then the Hodge index theorem shows that 
the numerical class of $z$ is $0$
if and only if $z.h=0$ and $z.z_j=0$ for all $j$;
thus the independence in Lemma~\ref{L:independent in Num}
can be tested by calculating intersection numbers of
already-known divisors without having to search for elements $y_i$.
If $p=1$ and $d>2$, and we assume the Hodge standard conjecture 
\cite{Grothendieck-standard-conjectures}*{Section~4, Conjecture~$\operatorname{Hdg}(X)$},
then the numerical class of $z$ is $0$
if and only if $z.h^{d-1}=0$ and $z.z_j.h^{d-2}=0$ for all $j$;
thus again the search for the $y_j$ is unnecessary, conjecturally.
A similar argument applies for higher~$p$
if we assume not only the Hodge standard conjecture
but also that an algebraic cycle $\lambda$ 
as in the Lefschetz standard conjecture 
\cite{Grothendieck-standard-conjectures}*{Section~3, Conjecture~$B(X)$}
can be found algorithmically
so that one can compute the primitive decomposition of $z$ and the $z_j$ 
before computing intersection numbers.
\end{remark}

\begin{remark}
\label{R:independence in Num^p Xsep}
In Lemma~\ref{L:independent in Num}, if $L$ is separable over $k$,
then it would be the same to ask for independence in $\Num^p \Xsep$,
by Proposition~\ref{P:Num X}\eqref{I:Num X injects}.
\end{remark}

\begin{corollary}
\label{C:lower bounds for Num}
There is an algorithm that takes as input 
$k$, $p$, and $X$, and that when left running forever,
prints out an infinite sequence of nonnegative integers whose maximum 
equals $\rk \Num^p \Xsep$.
\end{corollary}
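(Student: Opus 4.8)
The plan is to dovetail the one-sided test of Lemma~\ref{L:independent in Num} over all finite separable extensions of $k$ and all finite lists of codimension-$p$ cycles, and to read off $\rk \Num^p \Xsep$ as the supremum of the list-lengths for which the test succeeds.

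Concretely, using the explicit representations of Section~\ref{S:explicit}, enumerate all pairs $(L,\vec z)$ in which $L$ is a finite separable extension of $k$ and $\vec z = (z_1,\dots,z_s)$ is a finite list of cycles in $\calZ^p(X_L)$. For each such pair, Lemma~\ref{L:independent in Num} supplies a semi-algorithm $P_{L,\vec z}$ that halts precisely when the images of $z_1,\dots,z_s$ in $\Num^p \Xbar$ are $\Z$-independent; since $L$ is separable over $k$, Remark~\ref{R:independence in Num^p Xsep} (which rests on Proposition~\ref{P:Num X}\eqref{I:Num X injects}) lets us replace $\Num^p \Xbar$ here by $\Num^p \Xsep$. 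Run all the $P_{L,\vec z}$ together by a standard dovetailing --- at stage $N$, advance each of the first $N$ of them by $N$ steps --- printing the length $s$ whenever $P_{L,\vec z}$ halts, and printing $0$ at any stage at which nothing halts, so that the output is an infinite sequence of nonnegative integers.

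It then remains to check that the maximum of this sequence is $r \colonequals \rk \Num^p \Xsep$, which is finite since $\Num^p \Xsep$ is a finite-rank free abelian group. Every printed integer is either $0$ or the length $s$ of a list whose images in $\Num^p \Xsep$ are $\Z$-independent, and a free abelian group of rank $r$ has no $\Z$-independent subset of size $>r$; hence nothing exceeding $r$ is ever printed. Conversely, choose $\Z$-independent classes $w_1,\dots,w_r \in \Num^p \Xsep$; since $\calZ^p(\Xsep)$ is the union of the images of the groups $\calZ^p(X_L)$ as $L$ ranges over finite separable extensions of $k$, after passing to a single such $L$ we may pick cycles $z_1,\dots,z_r \in \calZ^p(X_L)$ with class $w_i$ in $\Num^p \Xsep$. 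Then $P_{L,(z_1,\dots,z_r)}$ halts and $r$ is eventually printed, so the maximum equals $r$.

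I do not anticipate a serious obstacle: the real content is already packaged in Lemma~\ref{L:independent in Num}. The point that requires the most care is the bookkeeping around separability --- running the test only over \emph{separable} extensions $L$ so that Remark~\ref{R:independence in Num^p Xsep} yields the bound $s \le r$, while for the matching lower bound one exploits that every cycle on $\Xsep$ is defined over some finite separable $X_L$, so that $r$ independent classes can all be realized on one $X_L$.
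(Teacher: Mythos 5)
Your proof is correct and follows essentially the same approach as the paper: dovetail the one-sided test of Lemma~\ref{L:independent in Num} over all pairs $(L,\vec z)$ with $L/k$ finite separable, invoking Remark~\ref{R:independence in Num^p Xsep} to pass between $\Num^p \Xbar$ and $\Num^p \Xsep$, and print $s$ whenever a test halts. The paper's proof is the same argument, stated more briefly, with the only cosmetic difference being the time-allocation scheme for the parallel processes.
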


\begin{proof}
Enumerate finite $s$-tuples $(z_1,\ldots,z_s)$ of elements 
of $\calZ^p(X_L)$
for all $s \ge 0$ and all finite separable extensions $L$ of $k$,
and run the algorithm of Lemma~\ref{L:independent in Num}
(using Remark~\ref{R:independence in Num^p Xsep})
on all of them in parallel, devoting a fraction $2^{-i}$ of the 
algorithm's time to the $i^{\tH}$ process.
Each time one of the processes halts, print its value of $s$.
\end{proof}

\begin{theorem}[Computing $\Num^p \Xsep$]
\label{T:Num}
\hfill
\begin{enumerate}[\upshape (a)]

\item \label{I:rank of Num}
Assume Hypothesis~\ref{H:compute etale 2}.
Then there is an algorithm that takes as input 
$(k,X,p,\ell)$ as in Setup~\ref{Setup}
such that, assuming $\rmE^p(X,\ell)$,
\begin{itemize}
\item the algorithm terminates if and only if $\rmT^p(X,\ell)$ holds, and
\item if the algorithm terminates, it returns $\rk \Num^p \Xsep$.
\end{itemize}

\item\label{I:unconditional Num}
There is an unconditional algorithm 
that takes $k$, $p$, $X$, and a nonnegative integer $\rho$ as input,
and computes the following assuming that $\rho = \rk \Num^p \Xsep$:
\begin{enumerate}[\upshape (i)]
\item\label{I:Num N}
a finitely generated torsion-free $G_k$-module $N$ 
having a $G_k$-equivariant injection $\Num^p \Xsep \injects N$
with finite cokernel,
\item\label{I:map to N}
the composition $\calZ^p(\Xsep) \to \Num^p \Xsep \injects N$
in the sense of Definition~\ref{D:map on cycles}, and
\item the rank of $\Num^p X$.
\end{enumerate}

\end{enumerate}
\end{theorem}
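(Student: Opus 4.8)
The plan is to combine the lower-bound construction (Corollary~\ref{C:lower bounds for Num}), which produces larger and larger families of numerically independent cycles, with an upper bound on $\rk\Num^p\Xsep$ coming from the étale cohomology input. For part~\eqref{I:rank of Num}, the upper bound should come from the rank $r$ of the module $M$ of Tate classes in Section~\ref{S:stuff}: by Lemma~\ref{L:size of W_n^G}\eqref{I:rank min}, once we arrange (by a finite base extension $k'/k$, chosen so that $G$ acts trivially on $T_{\ell'}$, which is legitimate because $\rk\Num^p$ is insensitive to separable base change by Proposition~\ref{P:Num X}\eqref{I:Num X injects}) that the hypotheses of that lemma hold, we can compute $r$ exactly from the sizes $\#T_{\ell^n}^G$, and each $\#T_{\ell^n}^G$ is computable from Hypothesis~\ref{H:compute etale 2} via Corollary~\ref{C:Kummer} and the $G_k$-module structure (we also need $t$ with $\ell^t T_\tors = 0$, which is computable by Proposition~\ref{P:torsion computable}). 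Thus we can compute the integer~(e) of Proposition~\ref{P:consequences}. Under $\rmE^p(X,\ell)$, Proposition~\ref{P:consequences} gives $\rk\Num^p\Xsep = $ (a) $\le$ (e), with equality if and only if $\rmT^p(X,\ell)$ holds. So the algorithm runs the lower-bound machine of Corollary~\ref{C:lower bounds for Num} in parallel with the computation of~(e); it terminates exactly when the running maximum of the lower bounds reaches~(e), which under $\rmE^p(X,\ell)$ happens if and only if $\rmT^p(X,\ell)$ holds, and when it terminates the common value is $\rk\Num^p\Xsep$.

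For part~\eqref{I:unconditional Num}, we are handed the correct value $\rho$, so we need no conjecture: run the algorithm of Corollary~\ref{C:lower bounds for Num} until we have found a finite extension $L/k$ and cycles $z_1,\dots,z_\rho \in \calZ^p(X_L)$ whose images in $\Num^p\Xsep$ are $\Z$-independent (Lemma~\ref{L:independent in Num}, Remark~\ref{R:independence in Num^p Xsep}); since $\rho$ is the true rank, these span a finite-index subgroup of $\Num^p\Xsep$ after base change to $\ksep$. Enlarging $L$ to be Galois over $k$, let $k' \colonequals L$ and let $N$ be the $G_k = \Gal(k'/k)$-module $\Z^\rho$ on which $\Gal(k'/k)$ acts by the permutation-with-signs action induced on the chosen independent cycles (more precisely: the $G_k$-module generated by $z_1,\dots,z_\rho$ inside $\Num^p\Xsep$ — its structure is computable because for each $\sigma \in \Gal(k'/k)$ the cycle $\sigma z_i$ lies in $\calZ^p(X_{k'})$ and we can express it in terms of the $z_j$ in $\Num^p X_{k''}$ for a suitable finite $k''/k'$ by solving a linear system of intersection numbers computed via Lemma~\ref{L:computing intersection number}). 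This $N$ comes with the tautological $G_k$-equivariant injection $\Num^p\Xsep \hookrightarrow N$ — wait, the inclusion goes the other way; instead take $N$ to be the \emph{saturation} of $\langle z_1,\dots,z_\rho\rangle$ inside $\Num^p\Xsep$, which however we cannot compute directly. The clean fix is: take $N$ to be $\langle z_1,\dots,z_\rho\rangle$ itself as a $G_k$-module (finitely generated, torsion-free, rank $\rho$), and observe that since $\Num^p\Xsep$ is torsion-free of rank $\rho$ and contains $N$ as a finite-index subgroup, the inclusion $N \hookrightarrow \Num^p\Xsep$ has finite cokernel — but~\eqref{I:Num N} asks for an injection $\Num^p\Xsep \hookrightarrow N$, so instead we must produce $N \supseteq \Num^p\Xsep$. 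For that, one can use the intersection pairing: $\Num^p\Xsep$ embeds into its own dual via the perfect (over $\Q$) pairing with $\Num^{d-p}\Xsep$, hence into $\Hom(\langle z_1,\dots,z_\rho\rangle, \Z)$ after clearing a denominator; take $N \colonequals \Hom(\langle z_1,\dots,z_\rho\rangle,\Z)$ with its induced $G_k$-action, and compute the map $\Num^p\Xsep \hookrightarrow N$ on cycle classes $w \in \calZ^p(X_L)$ by the rule $w \mapsto (z_i \mapsto w.\iota(z_i))$ where $\iota\colon \calZ^p \to \calZ^{d-p}$ is replaced by intersecting against a fixed complementary-dimension family — this requires choosing, by a further search, cycles $y_1,\dots,y_\rho \in \calZ^{d-p}(X_{L'})$ pairing nondegenerately with $z_1,\dots,z_\rho$, and then $N \colonequals \Z^\rho$ with the dual-of-permutation action and $w \mapsto (w.y_1,\dots,w.y_\rho)$. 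Injectivity with finite cokernel on $\Num^p\Xsep$ holds because $(y_i.z_j)$ is nonsingular; all entries are computable by Lemma~\ref{L:computing intersection number}. This gives~\eqref{I:Num N} and~\eqref{I:map to N}. Finally, $\rk\Num^p X$ is computed by running Corollary~\ref{C:lower bounds for Num} with $L=k$ (no base change) in parallel against the upper bound $\rho$; but that does not obviously terminate. Instead: $\rk\Num^p X = \rk(\Num^p\Xsep)^{G_k} \otimes \Q$ equals the rank of the $G_k$-invariants of $N\otimes\Q$ computed via any of the above finite $G_k$-module presentations — wait, $\Num^p X$ injects into $(\Num^p\Xsep)^{G_k}$ with finite cokernel by Proposition~\ref{P:Num X}\eqref{I:Num X has finite index}, and $(\Num^p\Xsep)\otimes\Q \cong N\otimes\Q$ as $G_k$-modules, so $\rk\Num^p X = \dim_\Q (N\otimes\Q)^{G_k}$, a finite linear-algebra computation.

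The main obstacle is the computation of the $G_k$-module structure on the lattice of cycle classes: given the found independent cycles $z_1,\dots,z_\rho$ over $L$, we must determine, for each $\sigma\in\Gal(L/k)$, the numerical class of $\sigma z_i$ as a $\Q$-linear combination of the $z_j$, which we do by computing sufficiently many intersection numbers $\sigma z_i . y$ and solving the resulting linear system — this terminates because $z_1,\dots,z_\rho$ span $\Num^p \Xsep \otimes\Q$, but writing it down carefully (including the choice of $k'$ large enough that $L$ is Galois over $k$ and that all the relevant cycles and their conjugates are defined over $k'$, and matching the formalism of Definition~\ref{D:map on cycles}) is where the real bookkeeping lies. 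Everything else reduces to the already-established Corollary~\ref{C:lower bounds for Num}, Lemma~\ref{L:computing intersection number}, Proposition~\ref{P:consequences}, and, for part~\eqref{I:rank of Num}, the computability of $\#T_{\ell^n}^G$ from Hypothesis~\ref{H:compute etale 2}.
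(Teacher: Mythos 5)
Your part~(a) is, in substance, the paper's day/night algorithm: compute (after a base change making $G$ act trivially on $T_{\ell'}$) a non-increasing sequence of upper bounds on $r=\rk M$ from Lemma~\ref{L:size of W_n^G}\eqref{I:rank min}, while Corollary~\ref{C:lower bounds for Num} produces a non-decreasing sequence of lower bounds on $\rk\Num^p\Xsep$, and halt when they coincide. But your phrasing overstates what is computable: you assert that ``we can compute $r$ exactly from the sizes $\#T_{\ell^n}^G$.'' Lemma~\ref{L:size of W_n^G}\eqref{I:rank min} expresses $r$ as a minimum over infinitely many $n$, and there is no a~priori way to know when that minimum has been achieved; you only ever have an upper bound for $r$ that eventually equals $r$. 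The algorithm therefore cannot ``terminate when the lower bound reaches~(e)'' --- you never know~(e). It terminates when the running lower bound meets the running upper bound. The idea is right; the description of the halting condition is not.

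Part~(b) has a genuine gap in the construction of the $G_k$-module $N$. Your final proposal is $N\colonequals \Z^\rho$ with the map $w\mapsto (w.y_1,\ldots,w.y_\rho)$ and ``the dual-of-permutation action.'' But $G_k$ does not in general permute the set $\{y_1,\ldots,y_\rho\}$, and if $\sigma^{-1}y_i$ is not an integer combination of the $y_j$ in $\Num^{d-p}\Xsep$ --- which is entirely possible, since the $y_i$ need only pair nondegenerately with the $z_j$, not form a basis of $\Num^{d-p}\Xsep$ --- then $\Z^\rho$ carries no $G_k$-action making $w\mapsto(w.y_i)$ equivariant. The paper repairs exactly this: it takes $Y$ to be the free abelian group on the $y_i$ \emph{together with all their Galois conjugates} (a genuine $G_k$-permutation module), observes that the intersection pairing gives a $G_k$-equivariant injection $\phi\colon\Num^p\Xsep\hookrightarrow\Hom_\Z(Y,\Z)$, and then defines $N$ to be the \emph{saturation inside $\Hom_\Z(Y,\Z)$} of the $\Z$-span of the $\phi(z_j)$ and their conjugates. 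This saturation is automatically $G_k$-stable and torsion-free, it equals the saturation of $\phi(\Num^p\Xsep)$ because the ranks match, and it contains $\phi(\Num^p\Xsep)$ with finite index; all of this is computable because $Y$ is a finite-rank permutation module and the $\phi(z_j)$ are explicit vectors. You gesture at ``saturation inside $\Num^p\Xsep$'' and correctly note that you cannot compute it there; the missing idea is that you \emph{can} compute a saturation inside the explicitly presented ambient lattice $\Hom_\Z(Y,\Z)$, provided $Y$ is enlarged to include conjugates so that the ambient lattice is a $G_k$-module. Once $N$ is built this way, \eqref{I:map to N} and \eqref{I:Num N} follow as you say, and \eqref{I:Num X has finite index} of Proposition~\ref{P:Num X} gives $\rk\Num^p X = \rk N^{G_k}$ for part~(iii), which matches your final remark.
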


\begin{proof}
\hfill
\begin{enumerate}[\upshape (a)]
\item 
Let $\ell'$ be as in Section~\ref{S:group-theoretic lemmas}.
Use Hypothesis~\ref{H:compute etale 2} to compute $T_{\ell'}$.
Replace $k$ by a finite Galois extension to assume that $G_k$
acts trivially on $T_{\ell'}$.
Let $M$ and $r$ be as in Section~\ref{S:stuff}.

Use the algorithm of Proposition~\ref{P:torsion computable} to compute 
an integer $t$ such that $\ell^t T_\tors=0$. 
By day, use Hypothesis~\ref{H:compute etale 2} to compute the groups $T_{\ell^n}$
for $n=t+1,t+2,\ldots$,
and the upper bounds $\lfloor \log \#T_{\ell^n}^G / \log \ell^{n-t} \rfloor$
on $r$ given by Lemma~\ref{L:size of W_n^G}\eqref{I:rank min}.
By night, compute lower bounds on $\rk \Num^p \Xsep$ 
as in Corollary~\ref{C:lower bounds for Num}.
Stop if the bounds ever match, 
which happens if and only if equality holds in the inequality
$\rk \Num^p \Xsep \le r$,
which by Proposition~\ref{P:consequences}
happens if and only if $\rmT^p(X,\ell)$ holds.
In this case, we have computed $\rk \Num^p \Xsep$.
\item
  \begin{enumerate}[\upshape (i)]
  \item 
Search for a finite Galois extension $k'$ of $k$, 
for $p$-cycles $y_1,\ldots,y_s$, 
and for codimension~$p$ cycles $z_1,\ldots,z_t$ over $k'$
until the intersection matrix $(y_i.z_j)$ has rank $\rho$.
The assumption $\rho=\rk \Num^p \Xsep$ guarantees that such 
$k'$, $y_i$, $z_j$ will be found eventually.
Let $Y$ be the free abelian group with basis equal to the 
set consisting of the $y_i$ and their Galois conjugates,
so $Y$ is a $G_k$-module.
The intersection pairing defines a homomorphism 
$\phi \colon \Num^p \Xsep \to \Hom_\Z(Y,\Z)$
whose image has rank equal to $\rho=\rk \Num^p \Xsep$.
Since $\Num^p \Xsep$ is torsion-free, $\phi$ is injective.
Compute the saturation $N$ of the $\Z$-span of $\phi(z_1),\ldots,\phi(z_s)$
in $\Hom_\Z(Y,\Z)$.
Because of its rank, $N$ equals the saturation of $\phi(\Num^p \Xsep)$.
Thus $N$ is a finitely generated torsion-free $G_k$-module
containing a finite-index $G_k$-submodule $\phi(\Num^p \Xsep)$
isomorphic to $\Num^p \Xsep$.
\item
Given $z \in \calZ^p(X_L)$ for some finite separable extension $L$ of $k'$,
computing its intersection number with each basis element of $Y$
yields the image of $z$ in $N$.
\item 
Because of Proposition~\ref{P:Num X}\eqref{I:Num X has finite index},
$\rk \Num^p X = \rk N^{G_k}$, which is computable.\qedhere
  \end{enumerate}
\end{enumerate}
\end{proof}

\begin{remark}
If we can bound the exponent of $T_{\tors}=\HH^{2p}(\Xsep,\Z_\ell)_\tors$
without using Proposition~\ref{P:torsion computable},
then Theorem~\ref{T:Num}\eqref{I:rank of Num}
requires Hypothesis~\ref{H:compute etale 2} only for $i=2p$.
In particular, this applies if $\Char k=0$
or if $\Char k > 0$ and $X$ lifts to characteristic~$0$,
by Remark~\ref{R:lifts to char 0}.
Actually, if $\Char k=0$, 
we do not need Hypothesis~\ref{H:compute etale 2} at all,
because Theorem~\ref{T:compute etale in char 0} says that it is true!
\end{remark}

\begin{remark}
\label{R:Num Xbar}
The analogue of Theorem~\ref{T:Num} with $\Xsep$ replaced by $\Xbar$
also holds, as we now explain.
By Proposition~\ref{P:Num X}\eqref{I:Num X_sep vs Num Xbar},
$\Num^p \Xsep$ is of finite index in $\Num^p \Xbar$,
so in the proof of Theorem~\ref{T:Num}\eqref{I:unconditional Num}(i),
the homomorphism $\phi$ extends to a $G_K$-equivariant injective homomorphism 
$\overline{\phi} \colon \Num^p \Xbar \to \Hom_\Z(Y,\Z)$.
Because of finite index, the image of $\overline{\phi}$ is contained in 
the $N$ defined there.
The cokernel of $\Num^p \Xbar \to N$ is finite.
\end{remark}

\begin{remark}
\label{R:intersection pairing}
For each $p \in \{0,1,\ldots,d\}$, 
let $N_p$ be the $N$ in Theorem~\ref{T:Num}\eqref{I:unconditional Num}(i),
and define $Q_p \colonequals N_p \tensor \Q$.
Then for any $p,q \in \Z_{\ge 0}$ with $p+q \le d$,
we can compute a bilinear pairing $Q_p \times Q_q \to Q_{p+q}$
that corresponds to the intersection pairing:
indeed, each $Q_p$ is spanned by classes of cycles,
whose intersections in the Chow ring can be computed
by an argument similar to that used to prove 
Lemma~\ref{L:computing intersection number}.
\end{remark}

\subsection{Checking algebraic equivalence of divisors}

\begin{lemma}
\label{L:algebraically equivalent to 0}
There is an algorithm that takes as input 
$k$, $X$, a finite extension $L$ of $k$, 
and an element $z \in \calZ^1(X_L)$,
and halts if and only if $z$ is algebraically equivalent to $0$.
\end{lemma}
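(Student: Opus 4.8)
The plan is to give a two-sided test: by day, search for a witness showing $z$ is algebraically equivalent to $0$; by night, search for a witness showing it is not. Since $\NS X_L$ is finitely generated and the pairing with $\Num^{d-1}$ is computable, the "not algebraically equivalent to $0$" side can be made to terminate, and algebraic equivalence to $0$ can likewise be certified explicitly, so exactly one of the two searches halts.

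For the "YES" search, recall from the discussion in Section~\ref{S:notation} that $z$ is algebraically equivalent to $0$ if and only if there is a connected variety $B$, two points $b_0, b_1 \in B(\Lbar)$ for some finite extension $\Lbar$, and a line bundle (equivalently, a relative effective divisor) $\MM$ on $X_L \times B$ restricting to the trivial bundle at $b_0$ and to $\OO(z)$ at $b_1$. First I would enumerate all finite separable extensions $L'$ of $L$, all projective varieties $B$ over $L'$ together with $L'$-rational points $b_0$ (after possibly enlarging $L'$ one may assume $b_0$ is rational), and all divisors $\calD$ on $X_{L'} \times B$; for each candidate, one checks by Gr\"obner-basis computation that $\calD$ is flat over $B$ (so it defines a family of divisors), that $\calD|_{b_0}$ is linearly equivalent to $0$ on $X_{L'}$ (a computable condition: test whether $\OO(\calD|_{b_0}) \isom \OO_{X_{L'}}$ by computing $\HH^0$ of the relevant sheaves), and that there is a point $b_1 \in B(L'')$ for some further finite extension $L''$ with $\calD|_{b_1}$ linearly equivalent to $z$ over $L''$. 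If such data exist they will be found, and by Proposition~\ref{P:Num X}\eqref{I:Num X injects} applied to $\NS$ (the injectivity of $\Num^1 X_L \to \Num^1 X_{L''}$, equivalently of $\NS$), it suffices to certify algebraic equivalence to $0$ after passing to the larger field, so this search halts precisely when $z$ is algebraically equivalent to $0$.

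For the "NO" search, run the algorithm of Theorem~\ref{T:Num}\eqref{I:unconditional Num}: with $p=1$ and an external loop over candidate values of $\rho$, this produces the module $N = N_1$ together with the map $\calZ^1(\Xsep) \to N$; the key point is that $N$ contains $\Num^1 \Xsep$ with finite cokernel, so the image of $z$ in $N \tensor \Q$ is zero if and only if $z$ is numerically equivalent to $0$. But by Remark~\ref{R:E^1}, Conjecture $\rmE^1(X,\ell)$ holds, so numerical equivalence to $0$ is equivalent to the vanishing of the class of $z$ in $V$, and since $\Pic^0 \subseteq$ (classes numerically equivalent to $0$) with the quotient $\NS$ being torsion-free modulo torsion, for divisors algebraic equivalence to $0$ differs from numerical equivalence to $0$ only by torsion in $\NS$; one then also computes the finite group $(\NS \Xsep)_\tors$ (via Theorem~\ref{T:NS_tors}, unconditional) and tests membership of $[z]$. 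Concretely: compute the image of $z$ in $N$; if it is nonzero in $N \tensor \Q$, halt — $z$ is not numerically, hence not algebraically, equivalent to $0$; if it is torsion in $N$, test whether the corresponding torsion class is represented by an algebraically-trivial divisor using the explicit description of $(\NS \Xsep)_\tors = (\Pic \Xsep)^\tau / \Pic^0 \Xsep$, and halt with answer NO if not.

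The main obstacle is ensuring the two searches are genuinely complementary and that neither can run forever when it should halt. The delicate point is the passage from numerical to algebraic equivalence: $\Num^1$ and $\NS$ modulo torsion agree (by $\rmE^1$), but $\Pic^0$ versus "torsion in $\NS$" must be handled, and this requires an effective handle on $(\NS \Xsep)_\tors$ together with an effective criterion for when a torsion line bundle class lies in $\Pic^0$ — i.e., recognizing the finitely many components of $\PIC^\tau_{X/k}$ and deciding which one $\OO(z)$ lands in. Earlier results in the paper (the computability of $(\NS \Xsep)_\tors$ and the explicit cycle-class map from Theorem~\ref{T:Num}) are exactly what make this step effective; assembling them correctly, and checking that the "YES" search covers all possible algebraic families $B$ (which it does, since one may always take $B$ to be a suitable component $H$ of $\EffDiv_{X_L}$ as noted in Section~\ref{S:notation}, and these are enumerable), is the crux.
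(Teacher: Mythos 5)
Your ``YES'' branch is in essence the paper's entire proof: the lemma asks only for a one\nobreakdash-sided test (a semi-decision procedure that halts when the answer is YES and runs forever otherwise, in the sense made explicit just before Lemma~\ref{L:independent in Num}). The paper's proof is a single sentence: enumerate families of divisors on $X_L$ over a connected base $B$ equipped with a pair of $L$-points $b_0,b_1$, and halt once the difference of the corresponding fibers is linearly equivalent to $z$. The parenthetical in Section~\ref{S:notation}---take $B$ to be a component $H$ of $\EffDiv_{X_L}$ above a translate of $\PIC^0_{X_L/L}$ as in Lemma~\ref{L:no functors}(\ref{I:nice bijection},\ref{I:A exists})---is precisely the guarantee that a witnessing family already exists over $L$ whenever $z$ is algebraically trivial, so your extra loop over extensions $L'\supseteq L$ is not needed, though it does no harm given the injectivity of $\NS X_L \to \NS X_{L'}$ from Proposition~\ref{P:Num X}\eqref{I:Num X injects}.

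The ``NO'' branch, however, is a genuine problem. It is not merely superfluous: the statement requires the algorithm to run forever when $z$ is not algebraically equivalent to $0$, so a procedure that terminates with a NO answer does not match the specification. More importantly, it is circular within the paper. You invoke Theorem~\ref{T:NS_tors} to compute $(\NS\Xsep)_\tors$ and test whether the (torsion) class of $z$ is zero; but the proof of Theorem~\ref{T:NS_tors} runs Lemma~\ref{L:algebraically equivalent to 0} in parallel on differences $D_1+D_2-D_3$ in order to discover the group law on $(\NS\Xsep)_\tors$, so the ``NO'' branch presupposes the very lemma being proved. You also appeal to Theorem~\ref{T:Num}\eqref{I:unconditional Num}, which takes $\rho = \rk\Num^1\Xsep$ as input and only promises correct output when that value is the true rank; an ``external loop over candidate $\rho$'' does not repair this, because the theorem gives no effective way to recognize an incorrect $\rho$ (without invoking the conjectural machinery of Theorem~\ref{T:Num}\eqref{I:rank of Num}). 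Drop the ``NO'' branch entirely: the one-sided search for a witnessing family is the whole argument.
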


\begin{proof}
Enumerate all possible descriptions of an algebraic family of divisors
on $X_L$ with a pair of $L$-points of the base (it is easy to check when
such a description is valid), and check for each
whether the difference of the cycles corresponding to the two points
equals $z$.
\end{proof}

\begin{lemma}
\label{L:algebraically torsion p=1}
There is an algorithm that takes as input 
$k$, $X$, a finite extension $L$ of $k$
and $z \in \calZ^1(X_L)$,
and decides whether $z$ lies in $\calZ^1(X_L)^\tau$,
i.e., whether the N\'eron--Severi class of $z$ is torsion,
i.e., whether $z$ is numerically equivalent to $0$.
\end{lemma}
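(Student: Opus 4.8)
The plan is to observe that the three conditions in the statement are equivalent, and then — in contrast to the one-sided tests in the preceding lemmas — to obtain a genuine decision procedure by running two one-sided searches in parallel, one certifying each possible answer. For the equivalences I would argue as follows. The class of $z$ in $\NS X_L = \Pic X_L/\Pic^0 X_L$ is torsion if and only if some positive multiple $mz$ has its line bundle in $\Pic^0 X_L$, i.e.\ (by the discussion of algebraic equivalence in Section~\ref{S:notation}) if and only if some positive multiple of $z$ is algebraically equivalent to $0$; this part is purely formal. That this in turn is equivalent to $z$ being numerically equivalent to $0$ uses that a divisor class algebraically equivalent to $0$ is numerically trivial (intersection numbers are locally constant in families) and, conversely, that a numerically trivial divisor class is torsion in $\NS$ — the latter being the known case $\rmE^1$ (Remark~\ref{R:E^1}) together with the injectivity of the cycle class map $\NS X_L \tensor \Q_\ell \injects \HH^2(\Xbar,\Q_\ell(1))$. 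Finally, by Proposition~\ref{P:Num X}\eqref{I:Num X injects} it is immaterial whether ``numerically equivalent to $0$'' is read over $X_L$ or over $\Xbar$.

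Given these equivalences, the algorithm runs two processes concurrently. \emph{First}, for each $m = 1, 2, 3, \dots$ (dovetailed), I would run the algorithm of Lemma~\ref{L:algebraically equivalent to 0} on the input $(k, X, L, mz)$; the instance for a given $m$ halts precisely when $mz$ is algebraically equivalent to $0$, so some instance halts if and only if $z \in \calZ^1(X_L)^\tau$, at which point the procedure outputs YES. \emph{Second}, I would run the algorithm of Lemma~\ref{L:independent in Num} in its one-cycle case $(s=1,\ p=1)$ on the input $(k, 1, X, L, z)$; since $\Num^1 \Xbar$ is torsion-free, the singleton $\{z\}$ is $\Z$-independent in $\Num^1 \Xbar$ exactly when its class there is nonzero, so this search halts if and only if $z$ is \emph{not} numerically equivalent to $0$, at which point the procedure outputs NO. By the equivalences above, exactly one of the two searches terminates, so the combined algorithm always halts and returns the correct answer.

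The only point that needs care — and where I would expect the ``main obstacle'', such as it is — is verifying the complementarity of the two one-sided tests: that ``the N\'eron--Severi class of $z$ is torsion'' is exactly the hypothesis under which the first search is guaranteed to succeed, and its negation exactly the hypothesis under which the second is. For the first search this rests on the finiteness of $(\NS X_L)_\tors$ (so that a torsion class genuinely has a multiple landing in $\Pic^0 X_L$) together with the identification of numerically trivial divisor classes with torsion classes in $\NS$ via $\rmE^1$ and the injectivity of the $\ell$-adic cycle class map; for the second it rests on the torsion-freeness of $\Num^1$ and the injectivity in Proposition~\ref{P:Num X}\eqref{I:Num X injects}. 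All of these are either established earlier in the paper or are standard facts about Picard and N\'eron--Severi groups, so I do not anticipate a serious difficulty: the lemma is essentially a repackaging of Lemmas~\ref{L:algebraically equivalent to 0} and~\ref{L:independent in Num} glued together by the (known) case $\rmE^1$.
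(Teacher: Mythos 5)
Your proof is correct and matches the paper's argument: the paper likewise runs in parallel the search for a multiple of $z$ algebraically equivalent to~$0$ (Lemma~\ref{L:algebraically equivalent to 0}) and the $s=1$ case of Lemma~\ref{L:independent in Num}, halting when one of the two one-sided tests succeeds. The only difference is that you spell out the standard equivalences asserted in the lemma's statement (torsion in $\NS$ iff numerically trivial), which the paper treats as given.
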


\begin{proof}
By day, search for a positive integer $n$
and a family of divisors showing that $nz$ 
is algebraically equivalent to $0$.
By night, run the algorithm of
Lemma~\ref{L:independent in Num} for $s=1$,
which halts if and only if the image of $z$ in $\Num^1 \Xbar$
is nonzero, i.e., if and only if $z \notin \calZ^1(X_L)^\tau$.
One of these processes will halt.
\end{proof}

\subsection{Computing the N\'eron--Severi group}

In this section, $k$ is an \emph{arbitrary} field.

\begin{lemma}
\label{L:Keeler}
\hfill
\begin{enumerate}[\upshape (a)]
\item \label{I:Keeler a}
Let $X$ be a nice $k$-variety.
There exists a divisor $B \in \calZ^1_{X/k}$
such that for any ample divisor $D$, the class of $D+B$ is very ample.
\item \label{I:Keeler b}
There is an algorithm that takes as input a finitely generated field $k$ 
and a $k$-variety $X$
and computes a $B$ as in~\eqref{I:Keeler a}.
\end{enumerate}
\end{lemma}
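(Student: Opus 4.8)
The plan is to take $B$ to be a suitable integer multiple $cH$ of the hyperplane class $H \colonequals \OO_X(1)$ coming from the given embedding $X \injects \PP^N$, with $c$ computed from cohomological data of $X$. For part~\eqref{I:Keeler a} we combine two standard facts. First, if $\mathcal M$ is a globally generated line bundle on $X$, then $\mathcal M \tensor \OO_X(H)$ is very ample: generating sections of $\mathcal M$ give a $k$-morphism $X \to \PP^m$, its product with the closed immersion $X \injects \PP^N$ attached to $H$ is again a closed immersion (a proper monomorphism), and composition with the Segre embedding realizes $X$ as a closed subvariety of projective space via a linear subsystem of $|\mathcal M \tensor \OO_X(H)|$; as this subsystem separates points and tangent vectors, $\mathcal M \tensor \OO_X(H)$ is very ample, over any ground field. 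Second (Castelnuovo--Mumford): if the pushforward to $\PP^N$ of $\OO_X(E)$ is $k$-regular with respect to $\OO_{\PP^N}(1)$, then $\OO_X(E+kH)$ is globally generated, and $k$-regularity of $\OO_X(E)$ unwinds to the vanishing $\HH^i\!\big(X,\OO_X(E+(k-i)H)\big)=0$ for $1\le i\le\dim X$ (the terms with $i>\dim X$ vanishing for free). So it is enough to produce $c$ such that, for \emph{every} nef divisor $D$ on $X$ (in particular every ample $D$), the sheaf $\OO_X(D)$ is $(c-1)$-regular on $\PP^N$; then $\OO_X(D+(c-1)H)$ is globally generated, so $\OO_X(D+cH)=\OO_X(D+B)$ is very ample with $B\colonequals cH$. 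The required vanishing holds uniformly in the nef divisor $D$ as soon as $c-1-\dim X\ge m_0$, where $m_0$ is the constant furnished by a theorem of Keeler (a uniform form of Fujita's vanishing theorem) applied to the sheaf $\OO_X$ and the ample divisor $H$: there is an integer $m_0$, depending only on $X$ and $H$, with $\HH^i\!\big(X,\OO_X(mH+D)\big)=0$ for all $i>0$, all $m\ge m_0$, and all nef divisors $D$ on $X$. Hence $c\colonequals\dim X+m_0+1$ and $B\colonequals cH$ work.

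For part~\eqref{I:Keeler b}, the algorithm need only produce an admissible value of $m_0$, since everything else in the formula $B=(\dim X+m_0+1)H$ is read directly off the input. The point is that the constant in the Fujita--Keeler vanishing theorem is effective: the usual proof — induction on the dimension of the support using general hyperplane sections, together with effective bounds for Castelnuovo--Mumford regularity — bounds $m_0$ in terms of computable invariants of the embedding $X\subseteq\PP^N$, such as $\dim X$ and the regularity of the homogeneous ideal of $X$, all of which are computable from that ideal by Gr\"obner-basis methods, computation of the cohomology of coherent sheaves being available as recalled in Section~\ref{S:previous approaches} (via \cite{Vasconcelos1998}). Thus the algorithm computes these invariants, evaluates the resulting explicit bound to obtain $m_0$, and returns $B=(\dim X+m_0+1)H$. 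No search or termination detection is needed; correctness is guaranteed a priori by part~\eqref{I:Keeler a}, which is precisely why one cannot instead simply guess a $B$ and test it, the defining property ``$D+B$ very ample for \emph{every} ample $D$'' not being directly checkable.

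The main obstacle is the effectivity claim underlying part~\eqref{I:Keeler b}: one must verify that the constant $m_0$ in the Fujita/Keeler vanishing statement can be extracted as an explicit function of computable invariants of $(X\subseteq\PP^N)$, rather than merely asserted to exist. By contrast the inputs to part~\eqref{I:Keeler a} are robust: ``$0$-regular $\Rightarrow$ globally generated'' and ``globally generated $\tensor$ very ample $\Rightarrow$ very ample'' hold over an arbitrary field, so working over a finite or number field rather than its closure causes no trouble.
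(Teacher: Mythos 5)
Your argument for part~\eqref{I:Keeler a} is correct but takes a genuinely different route from the paper. You derive the existence of $B$ from Keeler's uniform Fujita-type vanishing theorem (``there is $m_0$ such that $\HH^i(X,\OO_X(mH+N))=0$ for all $i>0$, $m\ge m_0$, $N$ nef'') combined with Castelnuovo--Mumford regularity and the standard fact that a globally generated line bundle tensored with a very ample one is very ample. The paper instead invokes Keeler's Fujita-\emph{conjecture}-type theorem \cite{Keeler2008}*{Theorem~1.1(2)} directly, which furnishes the \emph{explicit} divisor $B = K_X + (\dim X +1)A$ for $A$ very ample, valid over any field and in any characteristic. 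Both routes yield~\eqref{I:Keeler a}.

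The gap is in part~\eqref{I:Keeler b}. You correctly identify the crux yourself: the algorithm needs an explicit admissible value of $m_0$. But you then assert that ``the constant in the Fujita--Keeler vanishing theorem is effective: the usual proof \dots\ bounds $m_0$ in terms of computable invariants of the embedding,'' without justification or reference. This is not a standard fact. The usual proofs of Fujita vanishing --- Fujita's original argument and Lazarsfeld's noetherian induction in characteristic~$0$, and Keeler's Frobenius-amplitude argument in positive characteristic --- establish the \emph{existence} of $m_0$ by compactness/noetherian arguments or by iterated Frobenius pullback, and do not by themselves yield a computable bound in terms of, say, the degree and regularity of the embedding. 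The uniformity over \emph{all} nef twists $N$ is exactly what makes the constant elusive: you cannot simply read it off from CM regularity of $\OO_X$, because that only controls $\OO_X(mH)$, not $\OO_X(mH+N)$ for varying $N$. Unless you supply a specific effective version of Fujita vanishing (and none is cited), the algorithm in~\eqref{I:Keeler b} is not actually constructed; you would know it terminates but not what to compute. The paper sidesteps this entirely: $B = K_X + (\dim X+1)A$ is written down in closed form from data ($K_X$, a hyperplane class, $\dim X$) that are directly computable from the input, so no vanishing constant ever needs to be extracted.

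Incidentally, the remark at the end of your proposal --- that one cannot just guess-and-verify $B$, because ``$D+B$ very ample for every ample $D$'' is not a checkable condition --- is a good observation and applies equally to the paper's proof; it is precisely why an explicit formula such as Keeler's is needed rather than a search.
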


\begin{proof}
Let $K$ be a canonical divisor on $X$ (this is computable if $k$ is
finitely generated).
Let $A$ be a very ample divisor on $X$ (e.g., embed $X$ in some projective
space, and choose a hyperplane section).
By~\cite{Keeler2008}*{Theorem~1.1(2)}, 
$B \colonequals K + (\dim X + 1) A$ has the required property.
\end{proof}

Given an effective Cartier divisor of $X$,
we have an associated closed subscheme $Y \subseteq X$.
Call a closed subscheme $Y \subseteq X$ a divisor 
if it arises this way.
When we speak of the Hilbert polynomial of an effective Cartier divisor on 
a closed subscheme $X$ of $\PP^n$,
we mean the Hilbert polynomial of the associated closed subscheme of $X$.

\begin{lemma}
\label{L:compute Hilbert}
There is an algorithm that takes as input a finitely generated field $k$, 
a closed subscheme $X \subseteq \PP^n_k$, 
and an effective divisor $D \subset X$,
and computes the Hilbert polynomial of $D$.
\end{lemma}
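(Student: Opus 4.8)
The plan is to reduce the statement to the classical fact that a Gröbner basis of a homogeneous ideal determines its Hilbert polynomial, and then to check that the required computational-algebra primitives are available over an arbitrary finitely generated field $k$. Note first that $D$, being a closed subscheme of $\PP^n_k$, is specified by a finite list of homogeneous generators of its homogeneous ideal $I \subseteq S \colonequals k[x_0,\ldots,x_n]$ (necessarily containing that of $X$); the Hilbert polynomial of $D$ is by definition the polynomial $P \in \Q[t]$ with $P(m) = \dim_k (S/I)_m$ for all $m \gg 0$, and this depends only on $I$ — indeed only on $I$ up to saturation, since saturation alters $\dim_k(S/I)_m$ in finitely many degrees only. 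The effectiveness and Cartier property of $D$ are irrelevant to the computation; they merely single out ``the Hilbert polynomial of $D$'' as the object wanted in later sections.

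First I would observe that $k$ is a computable field: writing $k = \Frac A$ with $A = \Z[t_1,\ldots,t_r]/\pp$ a finitely generated domain (part of the input data, by Section~\ref{S:explicit}), one performs arithmetic and equality tests in $A$, hence in $k$, by Gröbner-basis computations modulo $\pp$ over $\Z$; consequently Buchberger's algorithm over $S$ is effective. Fixing the degree-reverse-lexicographic monomial order, I would compute a Gröbner basis of $I$ and thereby its monomial initial ideal $J$ (the ideal generated by the leading monomials of elements of $I$). The standard leading-term (flat degeneration) argument gives $\dim_k(S/I)_m = \dim_k(S/J)_m = \#\{\textup{degree-}m\textup{ monomials not in }J\}$ for every $m$, and the right-hand side is evaluated, for any prescribed $m$, by the elementary recursion
\[
	H_{S/J}(m) = H_{S/(J,x_i)}(m) + H_{S/(J:x_i)}(m-1),
\]
unwound until one reaches monomial ideals whose Hilbert functions can be read off directly.

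Finally, a classical and explicitly computable bound $m_0 = m_0(n,J)$ — any bound on the Castelnuovo--Mumford regularity of $J$, or the elementary Macaulay-type bound in terms of $n$ and the degrees of a generating set of $J$ — ensures that $H_{S/J}(m)$ coincides with a polynomial $P$ for all $m \ge m_0$; since $\deg P = \dim D = \dim X - 1$ is itself computable from $I$, evaluating $H_{S/J}$ at $m_0, m_0+1, \ldots, m_0 + \dim X$ and interpolating recovers $P$. (Alternatively, $P$ can be extracted from $J$ by the direct combinatorial formula for the Hilbert polynomial of a monomial ideal.) The lemma is thus essentially classical; the only point worth stating carefully is that the underlying Gröbner-basis machinery and the regularity bound — for which I would cite a standard computational-algebra reference such as \cite{Vasconcelos1998} — apply verbatim over the finitely generated field $k$, which is the mild obstacle here.
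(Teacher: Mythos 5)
Your proof is correct but takes a genuinely different computational route from the paper's. The paper's proof is a one-line citation to \cite{Hermann1926}*{Satz~2}: Hermann's degree bounds for solving linear systems over polynomial rings (applied repeatedly to compute syzygies) yield a finite free resolution of $\OO_D$, and the Hilbert polynomial is then the signed sum of the binomial Hilbert polynomials of the free terms. You instead run Buchberger's algorithm to pass to the degree-reverse-lex initial ideal $J = \operatorname{in}(I)$ and compute the Hilbert polynomial combinatorially from the monomial ideal, invoking the Macaulay flat-degeneration identity $\dim_k(S/I)_m = \dim_k(S/J)_m$ and a regularity bound to know where the Hilbert function becomes polynomial. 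Both are standard; the two approaches are historically linked (Hermann's bounds are the ancestors of worst-case complexity bounds for Gr\"obner bases), but the cited machinery is different. Your version is more self-contained for a modern reader and, usefully, makes explicit the point that $k$, being the fraction field of an explicitly presented finitely generated $\Z$-domain, is a computable field with decidable equality, which is the prerequisite for running either Hermann's algorithm or Buchberger's; the paper leaves this implicit. One tiny aside: the parenthetical $\dim D = \dim X - 1$ presumes $X$ equidimensional, which the lemma statement does not assume, but it is harmless because you only need $\deg P = \dim D$, which is computable directly from the initial ideal.
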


\begin{proof}
This is evident already from~\cite{Hermann1926}*{Satz~2},
which can be applied repeatedly to construct 
a minimal free resolution of $\OO_D$.
\end{proof}

Let $\Hilb X = \Union_P \Hilb_P X$ denote the Hilbert scheme of $X$,
where $P$ ranges over polynomials in $\Q[t]$.

\begin{lemma}
\label{L:Gotzmann}
There is an algorithm that takes as input a finitely generated field $k$, 
a closed subscheme $X \subseteq \PP^n_k$, and a polynomial $P \in \Q[t]$,
and computes the universal family $\calY \to \Hilb_P X$.
\end{lemma}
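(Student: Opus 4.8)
The plan is to realize the Hilbert scheme $\Hilb_P X$ explicitly as a closed subscheme of a suitable Grassmannian, via the Gotzmann regularity bound, and then to extract the universal family from the tautological subbundle. First I would recall Gotzmann's persistence and regularity theorems: given $P \in \Q[t]$, there is an explicit integer $m = m(P,n)$ (the Gotzmann number, computable as a function of the coefficients of $P$) such that $\Hilb_P \PP^n_k$ represents the functor sending a $k$-scheme $S$ to the set of quotients $\OO_S \tensor_k k[x_0,\ldots,x_n]_m \surjects \calF$ with $\calF$ locally free of rank $P(m)$ over $S$, subject to the condition that the induced quotient in degree $m+1$ also has the Hilbert-polynomial value $P(m+1)$. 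This exhibits $\Hilb_P \PP^n_k$ as a closed subscheme of the Grassmannian $\Gr\bigl(P(m),\, k[x_0,\ldots,x_n]_m\bigr)$, cut out by the vanishing of explicit minors encoding the Gotzmann persistence condition in degree $m+1$; all of these equations are computable from $P$ and $n$ alone.

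Second I would incorporate the ambient scheme $X \subseteq \PP^n_k$. Writing $I_X$ for the saturated homogeneous ideal of $X$ (given on input), the locus of quotients whose kernel contains $(I_X)_m$ is a further explicit linear (Schubert-type) condition on the Grassmannian, so $\Hilb_P X$ is the intersection of $\Hilb_P \PP^n_k$ with this linearly-defined subvariety; again the defining equations are computable. Third, I would produce the universal family: over the Grassmannian the tautological exact sequence $0 \to \calS \to \OO \tensor k[x_0,\ldots,x_n]_m \to \calQ \to 0$ gives, after restriction to $\Hilb_P X$ and saturation, the universal homogeneous ideal sheaf, and hence $\calY \subseteq X \times \Hilb_P X$ as the corresponding closed subscheme. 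Concretely, on each standard affine chart of the Grassmannian one solves linear equations over the coordinate ring to get generators of the universal ideal in degree $m$, then applies an effective saturation procedure (as in Lemma~\ref{L:compute Hilbert}, via~\cite{Hermann1926}) to obtain $\calY$ with its projection to $\Hilb_P X$.

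The main obstacle is effectivity of the Gotzmann bound and of the Grassmannian presentation: one must check that the Gotzmann number $m(P,n)$ is given by an explicit closed-form (it is — it can be read off the unique expression of $P$ as a sum of binomial coefficients, the ``Macaulay representation''), and that cutting out $\Hilb_P X$ inside the Grassmannian by persistence plus containment of $(I_X)_m$ really does give the correct scheme structure and not merely the correct set of points. For the latter I would invoke the representability statement in Gotzmann's theorem (see \cite{Gotzmann1978}, or the treatment in \cite{Bayer-thesis} / \cite{IarrobinoKanev1999}*{Appendix~C}), which identifies the scheme-theoretic Hilbert functor with exactly this determinantal subscheme of the Grassmannian; once that identification is granted, every step — computing $m$, writing down the Grassmannian in its Plücker or affine-chart presentation, imposing the linear and determinantal conditions, and saturating to recover $\calY$ — is a finite computation with polynomials over $k$, so the algorithm terminates and returns $\calY \to \Hilb_P X$.
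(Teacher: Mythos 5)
Your proposal is correct and follows essentially the same route as the paper: use Gotzmann's regularity/persistence bound to realize $\Hilb_P \PP^n_k$ as an explicit determinantal subscheme of a Grassmannian of subspaces of a fixed graded piece, cut out $\Hilb_P X$ by requiring that degree-$d$ piece to contain the degree-$d$ generators coming from $I_X$, and read off the universal family from the tautological subbundle. The paper's version makes slightly more explicit use of the fact (\cite{Iarrobino-Kanev1999}*{Corollary~C.18}) that $I_V$ and its saturation agree in degree $d$ to justify that the linear containment condition really characterizes $\Proj(S/I_V)\subseteq X$, but this is the same subtlety you flag and defer to the representability statement, so the arguments match.
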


\begin{proof}
This is a consequence of work of Gotzmann.
Let $S = \Directsum_{d \ge 0} S_d \colonequals k[x_0,\ldots,x_n]$,
so $\Proj S = \PP^n_k$.
Given $d,r \in \Z_{\ge 0}$,
let $\Gr_r(S_d)$ be the Grassmannian parametrizing
$r$-dimensional subspaces of the $k$-vector space $S_d$.
Then \cite{Gotzmann1978}*{\S3} 
(see also \cite{Iarrobino-Kanev1999}*{Theorem~C.29 and Corollary~C.30})
specifies $d_0 \in \Z_{\ge 0}$ such that for $d \ge d_0$,
one can compute $r \in \Z_{\ge 0}$ and a closed subscheme 
$W \subseteq \Gr_r(S_d)$
such that $W \isom \Hilb_P \PP^n$;
under this isomorphism a subspace $V \subseteq S_d$ 
corresponds to the subscheme defined by 
the ideal $I_V$ generated by the polynomials in $V$.
Moreover, $I_V$ and its saturation have the same $d^{\tH}$ graded part 
(see~\cite{Iarrobino-Kanev1999}*{Corollary~C.18}).

Let $f_1,\ldots,f_m$ be generators of a homogeneous ideal defining $X$.
Choose $d \in \Z$ such that $d \ge d_0$ and $d \ge \deg f_i$ for all $i$.
Let $g_1,\ldots,g_M$ be all the polynomials obtained
by multiplying each $f_i$ by all monomials of degree $d-\deg f_i$.
By the saturation statement above, 
$\Proj(S/I_V) \subseteq X$ if and only if $g_j \in V$ for all $j$.
This lets us construct $\Hilb_P X$ as an explicit closed subscheme
of $\Hilb_P \PP^n$.
Now $\Hilb_P X$ is known as an explicit subscheme of the Grassmannian,
so we have explicit equations also for the universal family over it.
\end{proof}

\begin{lemma}
\label{L:EffDiv}
Let $X$ be a nice $k$-variety.
There exists an open and closed subscheme $\EffDiv_X \subseteq \Hilb X$
such that for any field extension $L \supseteq k$ 
and any $s \in (\Hilb X)(L)$, 
the closed subscheme of $X_L$ corresponding to $s$ is a divisor on $X_L$ 
if and only if $s \in \EffDiv_X(L)$.
\end{lemma}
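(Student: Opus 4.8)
The plan is to show that the locus in $\Hilb X$ parametrizing closed subschemes that are effective Cartier divisors is open and closed, and the key point is that for a subscheme $Y \subseteq X_L$ with $X$ smooth, being an effective Cartier divisor is equivalent to being pure of codimension~$1$, which in turn (since $X$ is smooth, hence locally factorial) is a condition detectable on Hilbert polynomials. First I would recall that $\Hilb X = \bigsqcup_P \Hilb_P X$ decomposes into the (finitely many nonempty, quasi-projective) pieces indexed by Hilbert polynomials $P \in \Q[t]$; since $X$ is projective and integral of dimension $d$, for each $P$ the degree of $P$ is bounded by $d$. It therefore suffices to identify, for each $P$, whether the generic (equivalently every) point of each connected component of $\Hilb_P X$ parametrizes a divisor, and then take $\EffDiv_X$ to be the union of those components.

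The heart of the argument: let $Y \subseteq X_L$ be a closed subscheme with $X$ (hence $X_L$) smooth and integral. I would argue that $Y$ is an effective Cartier divisor if and only if $Y$ is equidimensional of dimension $d-1$ with no embedded points, i.e.\ $\OO_Y$ has no associated primes of dimension $<d-1$ and all minimal primes have dimension $d-1$. Indeed, on a smooth variety every prime (height-one) divisor is Cartier because the local rings are regular local, hence UFDs; and a subscheme with a height-$\ge 2$ component or an embedded point cannot be Cartier since a Cartier divisor is locally cut out by a single nonzerodivisor, forcing $\OO_Y$ to be Cohen--Macaulay in codimension~$0$ along $X_L$ and pure of the right dimension. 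Conversely a pure codimension-one subscheme without embedded points on a smooth variety is locally principal, hence Cartier and effective. So the "divisor" condition is: the support is pure of dimension $d-1$ \emph{and} there are no embedded primes. Both of these are constructible conditions on $\Hilb_P X$ and are in fact determined by $P$ on each connected component, because the Hilbert polynomial of $\OO_Y$ has degree exactly $d-1$ precisely when $\dim Y = d-1$, and — using flatness of the universal family over each connected component and upper-semicontinuity of fiber dimensions — purity and the absence of embedded components are open conditions that are constant on a connected base.

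To make this precise and to get openness and closedness simultaneously, I would use that the universal family $\calY \to \Hilb_P X$ is flat, so the Hilbert polynomial of $\calY_s$ is the constant $P$ on each connected component; hence $\dim \calY_s$ is constant there. The locus where $\calY_s$ has pure dimension $d-1$ is then a union of connected components (it is open because the dimension of the "bad" part is upper-semicontinuous and closed because it is detected by vanishing of $P$ in degrees $> d-1$). Among those, the further locus where $\calY_s$ has no embedded points is open by standard results on the constancy of depth / the $S_1$ locus in flat families (e.g.\ via the relative dualizing sheaf, or EGA~IV on the behavior of $\operatorname{Ass}$ in flat families), and since the base components are connected and $X$ is smooth, the $S_1$ condition — equivalently, here, Cartier-ness — is also closed: it fails only when an embedded component appears, which would change nothing in the Hilbert polynomial but is still a closed degeneration. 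Concretely, on a connected component all fibers are simultaneously divisors or simultaneously not, so we simply declare $\EffDiv_X$ to be the union of the finitely many components of $\Hilb X$ over which some (hence every) fiber is a divisor; this is both open and closed in $\Hilb X$ by construction, and it has the stated universal property because a fiber over an $L$-point is a divisor exactly when the component it lies in is one of the chosen ones.

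The main obstacle I anticipate is the bookkeeping around embedded points: one must be careful that "effective Cartier divisor" is genuinely equivalent to "pure codimension one without embedded components" on a smooth variety (the smoothness, giving factoriality of the local rings, is exactly what is needed), and that the absence of embedded components really is constant on connected components of $\Hilb_P X$ — for this one invokes that in a flat family over a connected (indeed reduced, or one passes to the reduction) base, the property of a fiber being $S_1$ is open, and the Hilbert polynomial being the fixed $P$ with $\deg P = d-1$ pins down that the "pure part" accounts for all of $P$, leaving no room for lower-dimensional pieces to vary; the closedness then comes for free since the complement is the union of the remaining components.
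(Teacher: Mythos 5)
Your characterization of divisors on a smooth variety is correct: since the local rings are regular, hence UFDs, a closed subscheme $Y \subseteq X_L$ is an effective Cartier divisor if and only if it is pure of codimension~$1$ with no embedded points. The openness of this condition in the Hilbert scheme is also fine (this is exactly the representability of the functor $\Div_{X/k}$ by an open subscheme of $\Hilb X$, which is what the paper cites from Bosch--L\"utkebohmert--Raynaud). The gap is in the closedness step, which is the actual content of the lemma: you assert that on each connected component of $\Hilb_P X$ the divisor condition is constant, but your justification is circular. The fact that the Hilbert polynomial $P$ is fixed does \emph{not} by itself forbid a flat family whose generic fiber is a divisor from specializing to a fiber with an embedded point or a lower-dimensional component --- the Hilbert polynomial remains $P$ by flatness regardless of the internal structure of the fiber, so ``leaving no room for lower-dimensional pieces to vary'' needs an argument, not just the fixed degree of $P$. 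Making that precise would require something like: the divisorial part of the degenerate fiber is algebraically equivalent to the generic divisor, and algebraically equivalent divisors have the same Hilbert polynomial by Riemann--Roch, so there is no room left in $P$ for the residual part --- but none of this appears in your writeup, and your phrase ``it fails only when an embedded component appears \ldots but is still a closed degeneration'' argues for openness rather than closedness.

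The paper's proof avoids this entirely by applying the valuative criterion for properness to the open immersion $\EffDiv_X \hookrightarrow \Hilb X$: given a DVR $R$ and a flat closed subscheme $Z \subseteq X \times \Spec R$ whose generic fiber is a divisor, flatness forces $Z$ to equal the scheme-theoretic closure of its generic fiber; since $X \times \Spec R$ is regular, this closure is a Weil divisor, hence Cartier, hence a relative effective Cartier divisor, and so the special fiber is a divisor too. This is the one-line mechanism that gives closed-under-specialization, which together with openness (and constructibility) gives that $\EffDiv_X$ is open and closed. You should incorporate this step; as written your argument assumes the conclusion at exactly the point where the work needs to be done.
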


\begin{proof}
See \cite{Bosch-Lutkebohmert-Raynaud1990}*{p.~215} for the 
definition of the functor $\EffDiv_X$ 
(denoted there by $\Div_{X/S}$ for $S=\Spec k$)
and its representability by an open subscheme of $\Hilb X$.
To see that it is also closed,
we apply the valuative criterion for properness to the inclusion 
$\EffDiv_X \to \Hilb X$:
if a $k$-scheme $S$ is the spectrum of a discrete valuation ring
and $Z$ is a closed subscheme of $X \times S$ that is flat over $S$
and the generic fiber $Z_\eta$ of $Z \to S$ is a divisor,
then $Z$ equals the closure of $Z_\eta$ in $X \times S$, 
which is an effective Weil divisor on $X \times S$
and hence a relative effective Cartier divisor 
since $X \times S$ is regular.
\end{proof}

The existence of the scheme $\EffDiv_X$ in Lemma~\ref{L:EffDiv}
immediately implies the following.

\begin{corollary}
\label{C:divisors under field extension}
Let $X$ be a nice $k$-variety.
Let $Y$ be a closed subscheme of $X$.
Let $L$ be a field extension of $k$.
Then $Y$ is a divisor on $X$ if and only if $Y_L$ is a divisor on $X_L$.
\end{corollary}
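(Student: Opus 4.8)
The plan is to deduce this immediately from the representing scheme $\EffDiv_X \subseteq \Hilb X$ produced in Lemma~\ref{L:EffDiv}, exploiting that membership in an open and closed subscheme is detected on the underlying topological point and is therefore insensitive to field extension.

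First I would note that, for the fixed projective embedding $X \subseteq \PP^n_k$, the closed subscheme $Y \subseteq X$ is proper over $k$ (as $X$ is projective) and automatically flat over $k$, so it has a Hilbert polynomial $P$ and determines a point $s \in (\Hilb_P X)(k) \subseteq (\Hilb X)(k)$. Since $k \to L$ is flat, $Y_L$ has the same Hilbert polynomial, and $Y_L$ corresponds to the point $s_L \in (\Hilb X)(L)$ obtained by composing $s$ with $\Spec L \to \Spec k$; in particular $s_L$ and $s$ have the same image point in $|\Hilb X|$.

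Next I would apply Lemma~\ref{L:EffDiv} twice. Applied with the trivial extension $k/k$ and the point $s$, it gives that $Y$ is a divisor on $X$ if and only if $s \in \EffDiv_X(k)$; applied with the extension $L/k$ and the point $s_L$, it gives that $Y_L$ is a divisor on $X_L$ if and only if $s_L \in \EffDiv_X(L)$. So the corollary reduces to the equivalence $s \in \EffDiv_X(k) \Leftrightarrow s_L \in \EffDiv_X(L)$. For this, since $\EffDiv_X$ is open and closed in $\Hilb X$, it is a union of connected components, and a morphism from the spectrum of a field to $\Hilb X$ factors through $\EffDiv_X$ precisely when its image point lies in $|\EffDiv_X|$. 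As $s$ and $s_L$ have the same image point in $|\Hilb X|$, one factors through $\EffDiv_X$ if and only if the other does, which is the desired equivalence.

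I do not expect a genuine obstacle here; the two points requiring a moment's care are that $Y$ legitimately defines a point of the Hilbert scheme (which uses that $X$ is projective, so that the argument is available, and flatness over a field) and that the open-and-closed nature of $\EffDiv_X \subseteq \Hilb X$ is exactly what makes the condition ``$s \in \EffDiv_X$'' stable under extending the ground field.
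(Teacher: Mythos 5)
Your proof is correct and is exactly the argument the paper has in mind: the paper states only that the corollary follows "immediately" from the existence of the open and closed subscheme $\EffDiv_X \subseteq \Hilb X$ of Lemma~\ref{L:EffDiv}, and you have supplied precisely the detail behind that claim (the point $s$ of $\Hilb X$ and the base-changed point $s_L$ have the same image in the underlying topological space, and factoring through an open subscheme is detected on topological points). One small remark: for that last step openness of $\EffDiv_X$ alone already suffices, so the closedness established in Lemma~\ref{L:EffDiv} is not actually needed here.
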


\begin{remark}
Corollary~\ref{C:divisors under field extension}
holds more generally for any finite-type $k$-scheme $X$,
as follows from fpqc descent applied to the ideal sheaf of $Y_L \subseteq X_L$.
\end{remark}

\begin{lemma}
\label{L:test for divisor}
There is an algorithm that takes as input 
a finitely generated field $k$, 
a smooth $k$-variety $X$,
and a closed subscheme $Y \subseteq X$,
and decides whether $Y$ is a divisor in $X$.
\end{lemma}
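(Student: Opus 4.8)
The plan is to reduce the statement to deciding whether the ideal sheaf $\mathcal{I}_Y\subseteq\mathcal{O}_X$ is invertible, and then to settle that by effective commutative algebra over $k$ (which, being finitely generated, is a computable field in the sense of Section~\ref{S:explicit}). First I would replace $X$ by a finite affine open cover $X=\bigcup_i\Spec A_i$ — computable from the given data, e.g.\ from the standard charts when $X$ is presented inside some $\PP^n_k$ — with each $A_i$ a finitely generated $k$-algebra that is \emph{regular} because $X$ is smooth, and then compute the ideal $I_i\subseteq A_i$ defining $Y\cap\Spec A_i$. By the definition of ``divisor'' recalled just before Lemma~\ref{L:compute Hilbert}, $Y$ is a divisor on $X$ precisely when $\mathcal{I}_Y$ is an invertible $\mathcal{O}_X$-module, i.e.\ precisely when each $I_i$ is an invertible ideal of $A_i$.

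The heart of the matter is a local criterion that genuinely uses that $X$ is smooth, not merely reduced. Because $A_i$ is regular, every localization $(A_i)_\pp$ is a regular local ring and hence a domain (indeed a UFD); consequently $I_i$ is invertible if and only if (a) for every prime $\pp$ the localization $(I_i)_\pp$ is generated by at most one element, and (b) no minimal prime of $A_i$ contains $I_i$. Indeed, under (b) every $(I_i)_\pp$ is nonzero, and if it is moreover generated by one element $f$ then $(I_i)_\pp=(A_i)_\pp f\cong(A_i)_\pp$ since $(A_i)_\pp$ is a domain, so $I_i$ is locally free of rank one; conversely invertibility forces (a) and forces $(I_i)_\pp\neq 0$ for every $\pp$, which is (b). Condition (a) is decidable: compute a free presentation $A_i^a\to A_i^b\to I_i\to 0$, where the second arrow is a chosen finite generating set of $I_i$ and the first encodes its module of syzygies (a Gr\"obner-basis computation), and test whether the first Fitting ideal $\operatorname{Fitt}_1(I_i)$ — generated by the $(b-1)\times(b-1)$ minors of the relation matrix — equals the unit ideal; equivalently, test whether $\Ext^j_{A_i}(A_i/I_i,A_i)=0$ for $2\le j\le\dim A_i$, which over the regular ring $A_i$ is equivalent to $I_i$ being projective. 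Condition (b) is decidable by computing the minimal primes of $A_i$ (effective primary decomposition over a finitely generated field needs only effective univariate factorization) and testing containment of $I_i$ in each; equivalently, compare $\dim(Y\cap\Spec A_i)$ with the dimensions of the connected components of $\Spec A_i$. Running this on every chart and answering ``yes'' iff all charts pass gives the algorithm, and the extremes come out right automatically: the empty subscheme passes trivially, while $Y=X$ fails condition~(b).

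I do not expect a deep obstacle — this is a lemma of the ``standard effective algebraic geometry'' kind — so the step to be careful about is simply that invertibility of $\mathcal{I}_Y$ is a condition at \emph{every} point of $X$: the naive test ``$I_i$ is principal at each maximal ideal'' is not directly executable, and the purpose of the Fitting-ideal (equivalently, projective-dimension) reformulation is exactly to replace it by one terminating computation, while condition~(b) must be retained to exclude the degenerate case in which $Y$ swallows an irreducible component of $X$. (As a by-product, running the same computation on the universal family of Lemma~\ref{L:Gotzmann} would exhibit $\EffDiv_X\cap\Hilb_P X$ explicitly as a clopen subscheme of $\Hilb_P X$, giving a constructive form of Lemma~\ref{L:EffDiv}; but for the statement at hand the direct computation on $Y$ is shorter.)
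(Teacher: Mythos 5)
Your proof is correct and establishes the lemma, but by a genuinely different route than the paper. The paper invokes a single classical criterion: on a regular (more generally, locally factorial) scheme, a closed subscheme $Y$ is an effective Cartier divisor if and only if every associated prime of $Y$ has codimension~$1$ in $X$ (citing EGA~IV~21.7.2 or Eisenbud, Theorem~11.8a), and then computes associated primes on affine charts via primary decomposition \`a la Hermann. You instead unwind the definition to invertibility of $\calI_Y$ and split it into two decidable pieces --- local generation by one element (via $\operatorname{Fitt}_1(I_i)=A_i$, or equivalently $\Ext^{\ge 2}_{A_i}(A_i/I_i,A_i)=0$) and non-containment in any minimal prime --- which is also sound: over a regular Noetherian ring the local rings are domains, so ``locally cyclic and nowhere zero'' is exactly ``locally free of rank~$1$.'' What the paper's route buys is brevity: one primary decomposition and one dimension check per chart, with the correctness outsourced to a textbook reference. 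What your route buys is that the Fitting-ideal test is a single Gr\"obner computation that globalizes automatically (you never need to enumerate primes), and, as you note, it is uniform enough to apply to the universal family, giving a constructive gloss on Lemma~\ref{L:EffDiv}. Both approaches lean on smoothness in the same place (regular local rings are UFDs/domains), and both are standard effective commutative algebra, so neither is ``deeper''; the paper's is just the one-liner version.
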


\begin{proof}
By \cite{EGA-IV.IV}*{Proposition~21.7.2} 
or \cite{Eisenbud1995}*{Theorem~11.8a.},
$Y$ is a divisor if and only if
all associated primes of $Y$ are of codimension~$1$ in $X$.
So choose an affine cover $(X_i)$ of $X$, 
compute the associated primes of the ideal of $Y \intersect X_i$ in $X_i$
for each $i$ (the first algorithm was given in~\cite{Hermann1926}),
and check whether they all have codimension~$1$ in $X_i$
(a modern method for computing dimension 
uses that the Hilbert polynomial of an ideal
equals the Hilbert polynomial of an associated initial ideal,
which can be computed from a Gr\"obner basis).
\end{proof}

\begin{lemma}
\label{L:connected components}
Let $\pi \colon H \to P$ be a proper morphism
of schemes of finite type over a field $k$.
Suppose that the fibers of $\pi$ are connected (in particular, nonempty).
Then $\pi$ induces a bijection on connected components.
\end{lemma}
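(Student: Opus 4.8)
The plan is to argue purely topologically; the only inputs from algebraic geometry are that a proper morphism of schemes is closed and that a morphism with nonempty fibers is surjective. The crux is the claim that for every connected subset $C \subseteq P$ the preimage $\pi^{-1}(C)$ is connected, and granting this the lemma follows by a short formal argument.

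To prove the claim, suppose $\pi^{-1}(C) = A \sqcup B$ with $A$ and $B$ disjoint and open, hence clopen, in $\pi^{-1}(C)$. For each $c \in C$ the fiber $\pi^{-1}(c)$ is connected and nonempty and is partitioned by its traces on $A$ and $B$, so it lies entirely in one of them; hence $\pi(A)$ and $\pi(B)$ are disjoint and (by surjectivity of $\pi$) cover $C$. Next I would check that the restriction $\pi\colon \pi^{-1}(C) \to C$ is still a closed map: if $Z = Z' \cap \pi^{-1}(C)$ with $Z'$ closed in $H$, then $\pi(Z) = \pi(Z') \cap C$ — any $c \in \pi(Z') \cap C$ is the image of some $z' \in Z'$, and such $z'$ automatically lies in $\pi^{-1}(C)$ — and $\pi(Z')$ is closed because $\pi$ is closed. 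Since $A$ and $B$ are closed in $\pi^{-1}(C)$, their images $\pi(A)$ and $\pi(B)$ are closed in $C$; as $C$ is connected, one of them is empty, so one of $A$, $B$ is empty. This proves the claim.

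Now I would deduce the lemma. Let $\bar\pi\colon \pi_0(H) \to \pi_0(P)$ be the induced map, sending a connected component $D$ of $H$ to the unique component $C$ of $P$ with $\pi(D) \subseteq C$. For such a pair, $D \subseteq \pi^{-1}(C)$, and $\pi^{-1}(C)$ is connected by the claim, so maximality of $D$ gives $D = \pi^{-1}(C)$. Hence $\bar\pi$ is injective, since $\bar\pi(D)$ determines $D = \pi^{-1}(\bar\pi(D))$. For surjectivity, let $C$ be any component of $P$; then $\pi^{-1}(C)$ is nonempty (surjectivity) and connected, so it lies in a single component $D$ of $H$, and $\pi(D) \supseteq \pi(\pi^{-1}(C)) = C$; since $\pi(D)$ is connected and contains the component $C$, we conclude $\bar\pi(D) = C$.

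I expect the one step requiring genuine care to be the verification that $\pi\colon \pi^{-1}(C) \to C$ is still closed, which is precisely where properness (i.e.\ closedness) of $\pi$ enters; everything else is point-set bookkeeping. An alternative would be to pass to the Stein factorization $\pi = g \circ f$ with $g$ finite: the fibers of $f$ are connected, and since those of $\pi$ are connected the fibers of $g$ are singletons, so $g$ is a finite bijective morphism and hence a homeomorphism — but the direct argument above is shorter and avoids invoking Stein factorization.
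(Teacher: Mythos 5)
Your proof is correct, but it takes a genuinely different route from the paper's. The paper argues directly with the (finitely many) connected components $H_1,\ldots,H_n$ of $H$: their images $P_i$ are connected, closed (by properness), pairwise disjoint (by connectedness of fibers), and cover $P$ (by surjectivity); since there are only finitely many of them, each $P_i$ is also open, hence the $P_i$ are precisely the connected components of $P$. This relies on the finite-type hypothesis to ensure $\pi_0(H)$ is finite. You instead prove the stronger, purely topological statement that $\pi^{-1}(C)$ is connected for every connected $C \subseteq P$ (using only that $\pi$ is a closed surjection with connected fibers), and then deduce the bijection formally. Your route buys generality — it works for any closed continuous surjection with connected nonempty fibers, with no Noetherian or finite-type hypothesis — at the cost of the extra lemma that the restriction $\pi^{-1}(C)\to C$ is still closed, which you correctly verify. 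Both proofs are valid; the paper's is shorter given the stated hypotheses, yours isolates the topological core.
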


\begin{proof}
Let $H_1,\ldots,H_n$ be the connected components of $H$.
Let $P_i\colonequals \pi(H_i)$, so $P_i$ is connected.
Since $\pi$ is proper, the $P_i$ are closed.
Since the fibers of $\pi$ are connected, the $P_i$ are disjoint.
Since the fibers are nonempty, $\Union P_i = P$.
Since the $P_i$ are finite in number, they are open too,
so they are the connected components of $P$.
\end{proof}

Let $\pi \colon \EffDiv_X \to \PIC_{X/k}$ be the proper morphism sending
a divisor to its class.
If $\PIC_{X/k}^c$ is a finite union of connected components of $\PIC_{X/k}$
and $L$ is a field extension of $k$,
let $\Pic^c X_L$ be the set of classes in $\Pic X_L$ 
such that the corresponding point of $\left(\PIC_{X/k}\right)_L$ 
lies in $\left(\PIC^c_{X/k}\right)_L$,
and let $\NS^c X_L$ be the image of $\Pic^c X_L$ in $\NS X_L$.

\begin{lemma}
\label{L:no functors}
\hfill
\begin{enumerate}[\upshape (a)]
\item \label{I:nice bijection}
Let $X$ be a nice $k$-variety.
Let $\PIC_{X/k}^c$ be any finite union of connected components of $\PIC_{X/k}$.
Assume the following:
\begin{equation}
  \label{E:Keeler condition}  
\begin{aligned}
 & \textup{For every field extension $L \supseteq k$, 
every divisor on $X_L$ with class in $\Pic^c X_L$} \\
 & \textup{is linearly equivalent to an effective divisor.}
\end{aligned}
\end{equation}
Let $H \colonequals \pi^{-1}(\PIC_{X/k}^c)$.
Then $\pi \colon H(L) \to \Pic X_L$ induces a bijection
\begin{equation}
  \label{E:connected components}
	\{\textup{connected components of $H_L$ that contain an $L$-point}\}
	\To \NS^c X_L.
\end{equation}
\item \label{I:A exists}
For any $\PIC^c_{X/k}$ as in~\eqref{I:nice bijection},
there is a divisor $F$ on $X$ such that the translate $F + \PIC_{X/k}^c$ 
satisfies \eqref{E:Keeler condition}.
\item \label{I:D and e}
There is an algorithm that takes as input
a finitely generated field $k$, a nice $k$-variety $X$,
a divisor $D \in \calZ^1(X)$, and a positive integer $e$,
and computes the following
for $\PIC^c_{X/k}$ defined as the (possibly empty) 
union of components of $\PIC_{X/k}$ 
corresponding to classes of divisors $E$ over $\kbar$
such that $eE$ is numerically equivalent to $D$:
\begin{enumerate}[\upshape (i)]
\item 
a divisor $F$ as in~\eqref{I:A exists} for $\PIC^c_{X/k}$,
\item 
the variety $H$ in~\eqref{I:nice bijection} for $F+\PIC^c_{X/k}$,
\item 
the universal family $Y \to H$ of divisors corresponding to points of $H$,
\item 
a finite separable extension $k'$ of $k$ 
and a finite subset $\calS \subseteq \calZ^1(X_{k'})$
such that there exists a $k$-homomorphism $k' \injects \ksep$
such that the composition $\calZ^1(X_{k'}) \to \calZ^1(\Xsep) \to \NS \Xsep$
restricts to a bijection $\calS \to \NS^c \Xsep$.
\end{enumerate}
\end{enumerate}
\end{lemma}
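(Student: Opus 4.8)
For part~(a), the plan is to apply Lemma~\ref{L:connected components} to the proper morphism $\pi|_H\colon H\to\PIC^c_{X/k}$ (here $H$ is of finite type over $k$, being proper over the finite-type scheme $\PIC^c_{X/k}$) and to each of its base changes $\pi|_{H_L}\colon H_L\to(\PIC^c_{X/k})_L$. The role of~\eqref{E:Keeler condition} is to make every fibre nonempty: any scheme point $x$ of $\PIC^c_{X/k}$ becomes a line bundle $\calL$ with class in $\Pic^c X_{L'}$ over a suitable extension $L'$ of its residue field, condition~\eqref{E:Keeler condition} forces $\HH^0(X_{L'},\calL)\ne 0$, and the fibre of $\pi$ over $x$ is then nonempty and geometrically connected (over $\kbar$ it is the linear system of a line bundle, hence a projective space). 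Lemma~\ref{L:connected components} then yields a bijection $\pi_0(H_L)\isomto\pi_0((\PIC^c_{X/k})_L)$ for every field extension $L$ of $k$, and it remains to match these sets with $\NS^c X_L$. A connected component of $H_L$ containing an $L$-point $D'$ maps to the component of $(\PIC^c_{X/k})_L$ containing $[\OO_{X_L}(D')]\in\Pic^c X_L$, and conversely~\eqref{E:Keeler condition} turns a component of $(\PIC^c_{X/k})_L$ with an $L$-point lying in $\Pic^c X_L$ back into a component of $H_L$ with an $L$-point. Assigning to such a component of $(\PIC^c_{X/k})_L$ the common N\'eron--Severi class of its $L$-points in $\Pic^c X_L$ gives a bijection onto $\NS^c X_L$; injectivity here uses that two $L$-points with the same class in $\NS X_L$ lie in a single fibre of $\PIC_{X/k,L}\to\pi_0(\PIC_{X/k,L})$, which is a torsor under the connected group $\PIC^0_{X/k,L}$ and hence connected. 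Composing these bijections is exactly the map~\eqref{E:connected components}.

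For part~(b), take a very ample divisor $A$ and a divisor $B$ as in Lemma~\ref{L:Keeler}(a). Over all $L\supseteq k$, the divisors with class in $\Pic^c X_L$ have numerical class in a fixed finite set $S\subseteq\Num^1\Xbar$ (namely the numerical classes of the finitely many geometric connected components of $\PIC^c_{X/k}$, each of which has constant numerical class); choose $n_0$ so that $s+n_0\bar A$ is ample for every $s\in S$. Then for any such $G$ the class of $G+n_0A$ is ample, so $G+n_0A+B$ is very ample by Lemma~\ref{L:Keeler}(a), in particular effective; hence $F\colonequals n_0A+B$ works. (Equivalently, $G+n_0A$ is ample if and only if $e(G+n_0A)$, which is numerically $D+en_0A$, is ample, so it is enough that $D+en_0A$ be ample, and $D+enA$ is very ample for all $n\gg 0$.)

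For part~(c): for (i), compute a canonical divisor $K$ and a very ample $A$, put $B\colonequals K+(\dim X+1)A$, search for the first $n_0$ with $D+en_0A$ very ample (a condition one can test directly), and return $F\colonequals n_0A+B$; by part~(b) this $F$ satisfies~(b) for $\PIC^c_{X/k}$, the case $\PIC^c_{X/k}=\varnothing$ being vacuous. For (ii) and~(iii): every divisor in $H$ has numerical class equal to the unique $\nu\in\Num^1\Xbar\tensor\Q$ with $e\nu=e[F]+[D]$, namely $\nu=[F]+\tfrac1e[D]$, and therefore one and the same Hilbert polynomial $P_0$. The decisive point is that $P_0$ can be computed without circularity: since $\chi(X,\calM)$ depends only on the numerical class of $\calM$ and is a polynomial function of it (asymptotic Riemann--Roch), one has $P_0(t)=\chi(\OO_X(tA))-\Psi\bigl(t[A]-\nu\bigr)$ where $\Psi$ is that polynomial function on $\Num^1\Xbar\tensor\Q$, and $\Psi$ is recovered on the $\Q$-span of $[A],[K],[D]$ by interpolating the computable integers $\chi(\OO_X(aA+bK+cD))$, $a,b,c\in\Z$, so $P_0$ is computable. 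Then compute $\EffDiv_X^{P_0}$ together with its universal family and its connected components, using Lemma~\ref{L:Gotzmann} and Lemmas~\ref{L:EffDiv} and~\ref{L:test for divisor} to cut $\EffDiv_X^{P_0}$ out of $\Hilb_{P_0}X$; for each component $\Gamma$, pick a point $D_\Gamma\in\calZ^1(X_L)$ over a finite extension $L$ and apply Lemma~\ref{L:algebraically torsion p=1} to decide whether $eD_\Gamma-eF-D$ is numerically equivalent to $0$, i.e.\ whether $\Gamma\subseteq H$. The union of the components passing this test is $H$ (correctly returned as $\varnothing$ when $\nu\notin\Num^1\Xbar$), and restricting the universal family gives $Y\to H$.

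For (iv), replace $k$ by a finite separable extension $k'$ over which every geometric connected component of $H$ is defined and carries a $k'$-point; both are achievable over finite separable extensions, since each such component surjects, with projective-space fibres, onto a component of $\PIC^c_{X/k}$, which is a torsor under the abelian variety $\PIC^0_{X/k}$ and acquires a rational point after a finite separable extension. Pick $D_i\in H(k')$, one per component, and set $\calS\colonequals\{D_i-F\}$. Applying part~(a) to $F+\PIC^c_{X/k}$, which satisfies~\eqref{E:Keeler condition} by the choice of $F$, with $L=\ksep$ identifies $\{[\OO(D_i)]\}$ bijectively with $\NS^{F+c}\Xsep=[F]+\NS^c\Xsep$, and translating by $-[F]$ gives the required bijection $\calS\isomto\NS^c\Xsep$. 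The main obstacle is precisely this effective description of $H$ in part~(c): a priori $H$ is a bounded but unlocated piece of the infinite-type scheme $\EffDiv_X$, and since the rank of $\Num^1\Xbar$, hence its structure, is not known unconditionally, one cannot simply enumerate over Hilbert polynomials; the Riemann--Roch computation of the single polynomial $P_0$ from the rational class $[F]+\tfrac1e[D]$ is what breaks the circularity, after which Lemma~\ref{L:algebraically torsion p=1} provides a genuine two-sided membership test.
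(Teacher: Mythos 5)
Your arguments for parts~(a), (b), and (c)(i)--(iii) are essentially the paper's: part~(a) runs through Lemma~\ref{L:connected components} with the same identification of fibres as linear systems, part~(b) is the paper's argument almost verbatim, and in (c)(ii)--(iii) you determine the Hilbert polynomial~$P_0$ by interpolating the Euler characteristic over a lattice of integer divisor classes and evaluating at a rational point --- the paper uses the two-parameter family $F+sD+tA$ where you use the three-parameter family $aA+bK+cD$, but the idea is the same, and your formula $P_0(t)=\chi(\OO_X(tA))-\Psi(t[A]-\nu)$ is correct. You also sensibly write the numerical test as $eD_\Gamma-eF-D\equiv 0$ and take $\calS=\{D_i-F\}$, which keeps track of the translation by $F$ more carefully than the printed text does.

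There is, however, a genuine gap in your treatment of (c)(iv). You assert that \emph{every} geometric connected component of $H$ acquires a $k'$-point after a finite separable extension, on the grounds that it fibres in projective spaces over a torsor under the ``abelian variety'' $\PIC^0_{X/k}$. This is not true in positive characteristic. First, $\PIC^0_{X/k}$ need not be smooth (hence need not be an abelian variety) when $\Char k>0$. Second, a torsor under a non-smooth proper group scheme over a separably closed field can fail to have a rational point; equivalently, the injection $\NS\Xsep\injects\NS\Xbar$ can be strict, with index a power of $\kappa$ (Proposition~\ref{P:Num X}\eqref{I:Num X_sep vs Num Xbar}). Concretely, $\PIC^c_{X/k}$ is defined via $\kbar$-points, and some of its geometric components may contain a $\kbar$-point but \emph{no} $\ksep$-point; by part~(a) with $L=\ksep$, exactly the components with a $\ksep$-point correspond to $\NS^c\Xsep$, so one must decide $\ksep$-point existence for each component. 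Your argument provides no such decision procedure. The paper resolves this by invoking Haran's quantifier-elimination algorithm for separably closed fields \cite{Haran1988} to decide, for each geometric component of $H$, whether it has a $\ksep$-point, and then includes in $\calS$ only representatives of those components that do. Without this step the algorithm could return a set $\calS$ strictly larger than $\NS^c\Xsep$. (In characteristic~$0$, where $\ksep=\kbar$, your claim is correct, which is perhaps why the issue is easy to overlook.)
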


\begin{proof}\hfill
  \begin{enumerate}[\upshape (a)]
  \item 
Taking $L=\kbar$ in~\eqref{E:Keeler condition}
shows that $H \stackrel{\pi}\to \PIC_{X/k}^c$ is surjective.
The fibers of $\pi \colon H(L) \to \Pic^c X_L$ are linear systems,
and are nonempty by~\eqref{E:Keeler condition},
so the reduced geometric fibers of $\pi \colon H \to \PIC^c_{X/k}$
are projective spaces.
In particular, $\pi_L \colon H_L \to \left(\PIC^c_{X/k}\right)_L$ 
has connected fibers, so by Lemma~\ref{L:connected components},
it induces a bijection on connected components.
Under this bijection, the connected components of $H_L$ 
that contain an $L$-point
map to the connected components of $\left(\PIC^c_{X/k}\right)_L$ 
containing the class of a divisor over $L$.
The set of the latter components is $\NS^c X_L$.
\item 
Let $A$ be an ample divisor on $X$.
For each of the finitely many geometric components $C$ of $\PIC^c_{X/k}$,
choose a divisor $D_C$ on $X_\kbar$ whose class lies in $C$,
and let $n_C \in \Z$ be such that $n_C A + D_C$ is ample.
Let $n=\max n_C$, so $n A + D_C$ is ample for all $C$.
Let $B$ be as in Lemma~\ref{L:Keeler}\eqref{I:Keeler a}.
Let $F=B+nA$.
If $L$ is a field extension of $k$ 
and $E$ is a divisor on $X_L$ with class in $\Pic^c X_L$,
let $C$ be the geometric component containing the class of $E_{\Lbar}$
(for some compatible choice of $\kbar \subseteq \Lbar$);
then $E$ is numerically equivalent to $D$, so $n A + E$ is ample too,
so $F + E = B + (nA + E)$ is very ample by choice of $B$,
so $F+E$ is linearly equivalent to an effective divisor.
\item 
Fix a projective embedding of $X$, 
and let $A$ be a hyperplane section.
  \begin{enumerate}[\upshape (i)]
  \item 
Let $n \in \Z_{>0}$ be such that $nA+D$ is ample.
(To compute such an $n$, try $n=1,2,\ldots$ until 
$|nA+D|$ determines a closed immersion.)
Compute $B$ as in Lemma~\ref{L:Keeler}\eqref{I:Keeler b}.
Let $F=B+nA$.
Suppose that $L$ is an extension of $k$ and $E$ is a divisor on $X_L$
such that $eE$ is numerically equivalent to $D$.
Then $e(nA+E)$ is numerically equivalent to $enA+D = (e-1)nA + (nA+D)$,
which is a positive combination of the ample divisors $A$ and $nA+D$,
so $nA+E$ is ample.
By choice of $B$, the divisor $F+E = B+(nA+E)$ is very ample
and hence linearly equivalent to an effective divisor.
\item
By the Riemann--Roch theorem, the Euler characteristic 
$\chi(F + sD + tA)$
is a polynomial $f(s,t)$ of total degree 
at most $d \colonequals \dim X$.
For any $s \in \Z$, we can compute $t \in \Z$ such that $F+sD+tA$
is linearly equivalent to an effective divisor, 
whose Hilbert polynomial can be computed by Lemma~\ref{L:compute Hilbert},
so the polynomial $\chi(F + sD + tA)$ can be found by interpolation.
Let $P(t) \colonequals f(1/e,t)$.
Compute the universal family $\calY \to \Hilb_P X$ 
as in Lemma~\ref{L:Gotzmann}.

Suppose that $E$ is such that $eE$ is numerically equivalent to $D$.
Then the polynomial $\chi(F+sE+tA)$ equals $f(s/e,t)$
since its values match whenever $e|s$.
In particular, $\chi(F+E+tA)=P(t)$;
i.e., $P(t)$ is the Hilbert polynomial of an effective divisor
linearly equivalent to $F+E$.
Thus the subscheme $H \subseteq \EffDiv_X \subseteq \Hilb X$
is contained in $\Hilb_P X$, which is a union of connected components 
of $\Hilb X$.
By definition, $H$ is a union of connected components of $\EffDiv_X$,
which by Lemma~\ref{L:EffDiv} is a union of connected components
of $\Hilb X$, so $H$ is a union of connected components of $\Hilb_P X$.
To compute $H$, 
compute the (finitely many) connected components of $\Hilb_P X$;
to check whether a component $C$ belongs to $H$, 
choose a point $h$ in $C$ over some extension of $k$,
apply Lemma~\ref{L:test for divisor} to $\calY_h$
to test whether $\calY_h$ is a divisor,
and if so, apply Lemma~\ref{L:algebraically torsion p=1}
to $e \calY_h - D$ 
to check whether $e \calY_h$ is numerically equivalent to $D$.
\item 
Compute $Y \to H$ as the part of $\calY \to \Hilb_P X$ above $H$.
\item 
Compute the connected components of $H_{k^\sep}$, 
which really means computing a finite separable extension $k'$
and the connected components of $H_{k'}$
such that these components are geometrically connected.
For each connected component $C$ of $H_{k'}$, 
use the algorithm of~\cite{Haran1988}
to decide whether it has a $k^\sep$-point,
and, if so, choose a $k'$-point $h$ of $C$,
enlarging $k'$ if necessary,
and take the fiber $Y_h$.
Let $\calS$ be the set of such divisors $Y_h$,
one for each component $C$ with a $\ksep$-point.
By~\eqref{I:nice bijection}, 
the map $\calS \to \NS \Xsep$ is a bijection onto $\NS^c \Xsep$.
\qedhere
  \end{enumerate}
  \end{enumerate}
\end{proof}

\begin{theorem}[Computing $(\NS \Xsep)_{\tors}$]
\label{T:NS_tors}
There is an algorithm that takes as input 
a finitely generated field $k$ and a nice $k$-variety $X$,
and computes the $G_k$-homomorphism
$\calZ^1(\Xsep)^{\tau} \to (\NS \Xsep)_\tors$
sending a divisor to its N\'eron--Severi class,
in the sense of Definition~\ref{D:map on cycles} and Remark~\ref{R:Z^tau}.
\end{theorem}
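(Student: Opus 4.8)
The plan is to compute the finite group $(\NS \Xsep)_\tors$ together with its
$G_k$-action and the classifying map from divisors, by applying
Lemma~\ref{L:no functors}\eqref{I:D and e} with $D$ equal to the zero divisor
and $e$ ranging over positive integers, and using a suitable bound on the
exponent of $(\NS \Xsep)_\tors$ to know when to stop. First I would observe
that a divisor class in $\NS \Xsep$ is torsion exactly when some positive
multiple $eE$ is algebraically equivalent to $0$, i.e.\ numerically equivalent
to the zero divisor $D=0$ (using Conjecture $\rmE^1$, which holds
unconditionally by Remark~\ref{R:E^1}, so numerical and algebraic equivalence of
divisors coincide in the relevant sense). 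Hence for each $e\ge 1$, applying
Lemma~\ref{L:no functors}\eqref{I:D and e} with $(D,e)=(0,e)$ produces the
subscheme $\PIC^c_{X/k}=\PIC^{[e]}_{X/k}$ parametrizing classes $E$ with
$eE\equiv 0$, together with a finite set $\calS_e\subseteq\calZ^1(X_{k'})$ whose
image under $\calZ^1(X_{k'})\to\calZ^1(\Xsep)\to\NS\Xsep$ is exactly the
$e$-torsion-related subgroup $\NS^c\Xsep=(\NS\Xsep)_\tors\cap(\text{classes with
}eE\equiv 0)$. As $e$ grows through multiples, these subgroups exhaust
$(\NS\Xsep)_\tors$.

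The key point is that we must \emph{detect} when we have captured all of
$(\NS\Xsep)_\tors$. For this I would use that $(\NS\Xsep)_\tors$ injects into
$\HH^2(\Xsep,\Z_\ell)_\tors$ for a suitable prime $\ell\ne\kappa$ (via the
cycle class map through the Kummer sequence, as in
Corollary~\ref{C:Kummer}), and compute the finite group
$\HH^2(\Xsep,\Z_\ell)_\tors$ using Proposition~\ref{P:torsion computable},
which rests on Hypothesis~\ref{H:compute etale 2}. Actually, since the theorem
is asserted unconditionally, one should instead bound the exponent $N$ of
$(\NS\Xsep)_\tors$ directly: choose two primes $\ell_1\ne\ell_2$ distinct from
$\kappa$, and for each compute $\HH^2(\Xsep,\Z_{\ell_i})_\tors$; alternatively,
note that $(\NS\Xsep)_\tors\subseteq(\PIC^\tau_{X/k})(\ksep)_\tors$ and the
finite scheme $\PIC^\tau_{X/k}$ is computable. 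Either way one obtains an
integer $N$ annihilating $(\NS\Xsep)_\tors$, and then running the construction
above with $e=N$ (equivalently $e=N!$) already produces all torsion classes in
one step. I would then take $\calS\colonequals\calS_N$, let $A'$ be the
abstract finite $\Gal(k'/k)$-module generated by the images of $\calS$
(computed via the intersection-pairing realization, or via the explicit
$G_k$-action on the components of $H_{k'}$ from
Lemma~\ref{L:no functors}\eqref{I:D and e}(iv)), and let the algorithm on
input $z\in\calZ^1(X_L)^\tau$ return its N\'eron--Severi class by expressing
$z$ modulo linear and algebraic equivalence in terms of the computed generators
$\calS$ — concretely, using Lemma~\ref{L:algebraically torsion p=1} to confirm
$z\in\calZ^1(X_L)^\tau$ and then searching for the effective divisor in the
appropriate fiber of $Y\to H$ that is linearly equivalent to $z$, reading off
which component it lies in.

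The main obstacle I anticipate is making the computation of the exponent $N$ of
$(\NS\Xsep)_\tors$ genuinely unconditional, since the obvious route through
$\HH^2(\Xsep,\Z_\ell)_\tors$ invokes Hypothesis~\ref{H:compute etale 2}. The
way around this is to work entirely within the Picard scheme: the torsion
subgroup of $\NS\Xbar$ is the group of connected components of
$\PIC^\tau_{X/k}$, a finite group scheme that can be computed from the defining
equations of $X$ (one computes $\PIC_{X/k}$ as a scheme via the Hilbert-scheme
presentation already used in Lemmas~\ref{L:Gotzmann} and~\ref{L:no functors},
identifies $\PIC^0_{X/k}$ as the component of the identity, and reads off the
torsion of the component group $\NS\Xbar$). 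This yields $N$ directly, and then
the construction of $\calS$, $A'$, and the algorithm on cycles goes through as
above; one last minor point is descending from $\Xbar$ to $\Xsep$ using
Proposition~\ref{P:Num X}\eqref{I:Num X_sep vs Num Xbar}, which costs only a
power of $\kappa$ in index and does not affect the torsion subgroup up to the
$\kappa$-primary part, handled by including $\kappa$-power multiples of $e$ as
well.
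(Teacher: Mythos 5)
Your central premise misreads Lemma~\ref{L:no functors}\eqref{I:D and e}. With $D=0$, the condition defining $\PIC^c_{X/k}$ is ``$eE$ is numerically equivalent to $0$.'' But $eE\equiv_{\num}0$ is equivalent to the class of $eE$ in $\NS\Xbar$ being torsion, which is equivalent to the class of $E$ itself being torsion — this holds for \emph{every} $e\ge 1$, so the parameter $e$ does not bound which torsion orders are captured. In particular $(D,e)=(0,1)$ already yields $\PIC^c_{X/k}=\PIC^\tau_{X/k}$ and $\NS^c\Xsep=(\NS\Xsep)_\tors$ in full, and part~(iv) of that lemma hands you a finite $\calS\subseteq\calZ^1(X_{k'})$ mapping bijectively onto $(\NS\Xsep)_\tors$ in a single application, with no need to know the exponent of the torsion subgroup in advance. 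The paper's proof is exactly this: apply the lemma with $(D,e)=(0,1)$, then determine the group law, the Galois action, and the classifying map on cycles by running Lemma~\ref{L:algebraically equivalent to 0} in parallel over candidates (the one-sided test is guaranteed to halt because a matching element of $\calD$ is known to exist).

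The remainder of your proposal is an elaborate and unnecessary detour driven by this misreading. You correctly notice that computing the exponent via $\HH^2(\Xsep,\Z_\ell)_\tors$ would invoke Hypothesis~\ref{H:compute etale 2}, which would make the theorem conditional; but your proposed repair — computing $\PIC^\tau_{X/k}$ directly as a finite group scheme — is nontrivial and not developed anywhere in the paper (one would need algorithmic access to $\PIC_{X/k}$ as a scheme and its identity component, which the paper avoids precisely by working with $\EffDiv_X\subseteq\Hilb X$ and the single ``$e=1$'' application). Likewise, your final remark about $\Xbar$ versus $\Xsep$ and $\kappa$-power multiples of $e$ is off target: Lemma~\ref{L:no functors}\eqref{I:D and e}(iv) already produces $\calS$ biject with $\NS^c\Xsep$ internally (using Haran's algorithm to decide $\ksep$-rationality of components), so no separate descent step, and no tuning of $e$, is required. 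Once you realize that $e=1$ captures all torsion, your entire second and third paragraphs can be deleted.
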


\begin{proof}
Apply Lemma~\ref{L:no functors}\eqref{I:D and e}
with $D=0$ and $e=1$
to obtain a finite Galois extension $k'$
and a subset $\calD \subseteq \calZ^1(X_{k'})$
mapping bijectively to $(\NS \Xsep)_\tors$.
For each pair $D_1,D_2 \in \calD$,
run Lemma~\ref{L:algebraically equivalent to 0}
in parallel on $D_1+D_2-D_3$ for all $D_3 \in \calD$
to find the unique $D_3$ algebraically equivalent to $D_1+D_2$;
this determines the group law on $\calD$.
Similarly compute the $G_k$-action.
Similarly, given a finite separable extension $L$ of $k'$
and $z \in \calZ^1(X_L)^\tau$, we can find the unique $D \in \calD$
algebraically equivalent to $z$.
\end{proof}

If $\calD \subseteq \calZ^1(X_\ksep)$,
let $(\NS \Xsep)^\calD$ be the saturation of the $G_k$-submodule
generated by the image of $\calD$ in $\NS \Xsep$,
and let $\calZ^1(\Xsep)^\calD$ 
be the set of divisors in $\calZ^1(\Xsep)$ 
whose algebraic equivalence class lies in $(\NS \Xsep)^\calD$.

\begin{theorem}[Computing $\NS \Xsep$]
\label{T:NS}
\hfill
\begin{enumerate}[\upshape (a)]
\item \label{I:saturate D}
Given a finitely generated field $k$, a nice $k$-variety $X$, 
a finite separable extension $L$ of $k$ in $\ksep$
and a finite subset $\calD \subseteq \calZ^1(X_L)$,
we can compute the $G_k$-homomorphism 
$\calZ^1(\Xsep)^\calD \to (\NS \Xsep)^\calD$
in the sense of Definition~\ref{D:map on cycles} and Remark~\ref{R:Z^tau}.
\item \label{I:compute the whole NS}
There is an algorithm that takes as input $k$ and $X$ as above 
and a nonnegative integer $\rho$,
and computes the $G_k$-homomorphism 
$\calZ^1(\Xsep) \to (\NS \Xsep)$
in the sense of Definition~\ref{D:map on cycles} and Remark~\ref{R:Z^tau}
assuming that $\rho = \rk \NS \Xsep$.
\end{enumerate}
\end{theorem}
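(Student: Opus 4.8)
The plan is to build $\NS\Xsep$ as a $G_k$-module by combining the N\'eron--Severi torsion algorithm (Theorem~\ref{T:NS_tors}) with the already-computed free quotient coming from $\Num^1$. For part~\eqref{I:saturate D}, the idea is that the map $\calZ^1(\Xsep)\to\NS\Xsep$ factors through the numerical equivalence class map $\calZ^1(\Xsep)\to\Num^1\Xsep$ up to torsion. More precisely, two divisors have the same image in $\NS\Xsep$ if and only if their difference is algebraically equivalent to $0$, which we can detect on divisors over explicit finite extensions by Lemma~\ref{L:algebraically equivalent to 0} run in parallel with its complementary test; combined with the numerical-equivalence test of Lemma~\ref{L:algebraically torsion p=1}, we get a full decision procedure for equality in $\NS\Xsep$ of divisor classes given by explicit cycles. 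First I would use this to compute, inside the finitely generated group generated by the image of $\calD$ and its Galois conjugates, the relations and hence the $G_k$-module structure of the subgroup it generates; then I would saturate, using the intersection-pairing model $N_1$ from Theorem~\ref{T:Num}\eqref{I:unconditional Num} for $p=1$ (which identifies $\widetilde{\NS\Xsep}$ with a finite-index submodule of an explicit torsion-free $G_k$-module) together with the torsion part from Theorem~\ref{T:NS_tors}, to pin down $(\NS\Xsep)^\calD$ as an explicit $G_k$-module and to express the map from $\calZ^1(X_L')$ in the sense of Definition~\ref{D:map on cycles}: given a new divisor $z$, compute its image in $N_1$ via intersection numbers, and separately locate its torsion part by the algorithm of Theorem~\ref{T:NS_tors} applied to a suitable multiple, then reassemble.

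For part~\eqref{I:compute the whole NS}, the point is that once we know $\rho=\rk\NS\Xsep$, we can search for divisors over growing finite extensions whose classes are numerically independent until we find $\rho$ of them; call the resulting set $\calD$. By construction $(\NS\Xsep)^\calD$ then has the same rank $\rho$ as $\NS\Xsep$, and since $\NS\Xsep/(\NS\Xsep)^\calD$ is finitely generated of rank $0$ it is finite; but $\NS\Xsep$ is generated by $(\NS\Xsep)^\calD$ together with torsion, and in fact the saturation step already absorbs the torsion coming from $N_1$. The remaining issue is the torsion that is \emph{not} seen by the cycles in $\calD$. Here I would invoke Theorem~\ref{T:NS_tors} to compute $(\NS\Xsep)_\tors$ as an explicit $G_k$-module together with its map from $\calZ^1(\Xsep)^\tau$, and then glue: $\NS\Xsep$ sits in an extension $0\to(\NS\Xsep)_\tors\to\NS\Xsep\to\widetilde{\NS\Xsep}\to0$, and both outer terms are now known explicitly as $G_k$-modules with their cycle-class maps, so I would run Lemma~\ref{L:algebraically equivalent to 0} in parallel over finite extensions to discover the finitely many relations needed to reconstruct the middle term and the splitting (or non-splitting) data, i.e., to compute the pullback of the two maps $\calZ^1(\Xsep)\to\widetilde{\NS\Xsep}$ and (on a multiple) $\calZ^1(\Xsep)^\tau\to(\NS\Xsep)_\tors$.

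The hard part will be making the gluing effective: knowing $\widetilde{\NS\Xsep}$ and $(\NS\Xsep)_\tors$ separately as abstract $G_k$-modules does not immediately give the extension class, and more importantly Definition~\ref{D:map on cycles} demands a uniform algorithm that takes an \emph{arbitrary} divisor over an \emph{arbitrary} finite separable $L$ and outputs its class. The resolution is that for any input divisor $z$, we first compute its image $\bar z\in\widetilde{\NS\Xsep}$ via the intersection-pairing model (this is unconditional and uniform), then choose an explicit divisor $w$ in our generating set with the same image in $\widetilde{\NS\Xsep}$ — possible because $\calD$ generates a finite-index subgroup, after enlarging $\calD$ by finitely many more divisors found by search so that it generates $\widetilde{\NS\Xsep}$ on the nose rather than up to finite index — and then $z-w\in\calZ^1(X_{L'})^\tau$, so Theorem~\ref{T:NS_tors} computes its torsion class; adding back the (now explicit) class of $w$ gives the class of $z$. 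Enlarging $\calD$ to generate $\widetilde{\NS\Xsep}$ exactly, not merely up to finite index, is possible because the index is computable once $N_1$ is known, and one keeps searching for new numerically-trivial-free combinations until the index drops to $1$; this search terminates because such divisors exist over some finite extension by Proposition~\ref{P:Num X}\eqref{I:Num X has finite index}. With $\calD$ so arranged, assembling the $G_k$-module structure of $\NS\Xsep=\langle\calD\rangle+(\NS\Xsep)_\tors$ and the cycle map is then a finite computation of relations via Lemma~\ref{L:algebraically equivalent to 0}, exactly as in the proof of Theorem~\ref{T:NS_tors}.
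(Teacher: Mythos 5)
Your overall strategy matches the paper's: decompose the task into computing the torsion part via Theorem~\ref{T:NS_tors}, a torsion-free part controlled by intersection numbers, and then express the class of an input divisor $z$ by computing its intersection numbers to get the free component and applying Theorem~\ref{T:NS_tors} to a numerically trivial difference $z - F$ to get the torsion component. Part~\eqref{I:compute the whole NS} as a reduction to part~\eqref{I:saturate D} once one searches out $\rho$ numerically independent divisors is also the paper's route.

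However, there is a genuine gap at the saturation step. You propose to saturate the span of $\calD$ inside the abstract torsion-free $G_k$-module $N_1$ from Theorem~\ref{T:Num}\eqref{I:unconditional Num}, and then claim that ``the index is computable once $N_1$ is known, and one keeps searching for new numerically-trivial-free combinations until the index drops to $1$.'' This does not yield a terminating algorithm. The module $N_1$ is constructed as the saturation of certain classes inside $\Hom_\Z(Y,\Z)$, and its only guaranteed relation to $\Num^1 \Xsep = \widetilde{\NS\Xsep}$ is that the latter injects with \emph{some unknown} finite index $m$. If $m > 1$, your search for additional divisors will never reduce the index of your span in $N_1$ below $m$, and you have no criterion for recognizing this: deciding whether your current generating set already equals $\Num^1\Xsep$ amounts to deciding that no divisor exists with a given fractional class, which is not something Lemma~\ref{L:algebraically equivalent to 0} or Lemma~\ref{L:algebraically torsion p=1} can do (the former halts only on YES, and the latter only decides torsion-ness of a given divisor, not realizability of an abstract class). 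The paper resolves exactly this via Lemma~\ref{L:no functors}\eqref{I:D and e}: the Hilbert-scheme construction produces an explicit finite set $\calS$ of divisors whose classes exhaust $\NS^c \Xsep$ for the union $\PIC^c_{X/k}$ of components of $\PIC_{X/k}$ corresponding to divisors $E$ with $eE$ numerically equivalent to a given $D$. Checking whether $\calS$ is empty gives an effective, always-terminating test for whether a candidate class in $\Num^1$ is divisible by $e$ \emph{inside} $\Num^1$. Without this (or some other effective realizability test), your algorithm has no stopping criterion for the saturation and the proof does not go through.

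A secondary observation: your worry about ``gluing'' the extension $0 \to (\NS\Xsep)_\tors \to \NS\Xsep \to \widetilde{\NS\Xsep} \to 0$ is a detour. Once one has explicit divisors $F_1,\ldots,F_t$ whose classes form a $\Z$-basis for the saturation $\Delta'$, the group $(\NS\Xsep)^\calD$ is $(\Z F_1 \oplus \cdots \oplus \Z F_t) \oplus (\NS\Xsep)_\tors$ as an abelian group, and the $G_k$-action is computed directly by running the class map on Galois conjugates of the generators. No extension-class computation is needed.
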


\begin{remark}
\label{R:NS}
Assume Hypothesis~\ref{H:compute etale 2} and $\rmT^1(X,\ell)$.
(Conjecture~$\rmE^1(X,\ell)$ is proved.)
Then Theorem~\ref{T:Num}\eqref{I:rank of Num}  
lets us compute $\rk \NS \Xsep$,
so Theorem~\ref{T:NS}\eqref{I:compute the whole NS} 
lets us compute $\NS \Xsep$.
Recall also that Hypothesis~\ref{H:compute etale 2} 
is true when restricted to characteristic~$0$ 
(Theorem~\ref{T:compute etale in char 0})
or varieties that lift to characteristic~$0$
(Corollary~\ref{C:compute etale by lifting}).
\end{remark}

\begin{proof}[Proof of Theorem~\ref{T:NS}]
\hfill
\begin{enumerate}[\upshape (a)]
\item
Enlarge $L$ to assume that it is Galois over $k$,
and replace $\calD$ by the union of its $\Gal(L/k)$-conjugates.
There exist $D_1,\ldots,D_t \in \calD$ whose images in
$(\Num^1 \Xsep) \tensor \Q$ form a $\Q$-basis
for the image of the span of $\calD$.
Then there exist $1$-dimensional cycles $E_1,\ldots,E_t$ on $X_L$ 
such that $\det (D_i.E_j) \ne 0$ 
(the $E_i$ exist over a finite extension of $L$, but can be replaced
by their traces down to $L$),
and each $D \in \calD$ has a positive integer multiple
numerically equivalent to an element of the $\Z$-span of $\calD$.
Search for such $D_1,\ldots,D_t,E_1,\ldots,E_t$
and for numerical relations as above for each $D \in \calD$
(use Lemma~\ref{L:algebraically torsion p=1} to verify relations).
Let $e \colonequals \left| \det (D_i.E_j) \right|$.
Let $\Delta$ be the span of the image of $\calD$ in $\Num^1 \Xsep$.
Let $\Delta'$ be the saturation of $\Delta$ in $\Num^1 \Xsep$.
Then $(\Delta':\Delta)$ divides $e$.
For each coset of $e \Delta$ in $\Delta$,
choose a representative divisor $D$ in the $\Z$-span of $\calD$,
and check whether 
the set $\calS$ of Lemma~\ref{L:no functors}\eqref{I:D and e} is nonempty 
to decide whether the numerical equivalence class of $D$ is in $e \Delta'$;
if so, choose a divisor in $\calS$.
The classes of these new divisors, together with those of $D_1,\ldots,D_t$,
generate $\Delta'$.
Moreover, we know the integer relations between all of these,
so we can compute integer combinations $F_1,\ldots,F_t$ whose classes 
form a \emph{basis} for $\Delta'$.
Then 
\[
	(\NS \Xsep)^\calD \isom (\Z F_1 \directsum \cdots \directsum \Z F_t) \directsum (\NS \Xsep)_{\tors}
\]
as abelian groups, 
and $(\NS \Xsep)_{\tors}$ can be computed by Theorem~\ref{T:NS_tors}.

The homomorphism $\calZ^1(\Xsep)^\calD \to (\NS \Xsep)^\calD$
is computed as follows:
given any $z \in \calZ^1(\Xsep)^\calD$ 
(defined over some finite separable extension $L'$ of $L$ in $\ksep$),
compute an integer combination $F$ of the $F_i$ 
such that $F.E_j = z.E_j$ for all $j$,
and apply the homomorphism of Theorem~\ref{T:NS_tors}
to compute the class of $z-F$ in $(\NS \Xsep)_{\tors}$.

Applying this to all conjugates of our generators of 
$(\NS \Xsep)^\calD$ lets us compute the $G_k$-action on
our model of $(\NS \Xsep)^\calD$.
\item 
Assuming that $\rho=\rk \NS \Xsep$,
the algebraic equivalence classes 
of divisors $D_1,\ldots,D_\rho \in \calZ^1(\Xsep)$
form a $\Z$-basis for a free subgroup of finite index in $\NS \Xsep$
if and only if there exist $1$-cycles $E_1,\ldots,E_\rho$ on $\Xsep$
such that $\det (D_i.E_j) \ne 0$.
Search for a finite separable extension $L$ of $k$ in $\ksep$,
divisors $D_1,\ldots,D_\rho \in \calZ^1(X_L)$,
and $1$-cycles $E_1,\ldots,E_\rho$ on $X_L$
until such are found with $\det (D_i.E_j) \ne 0$.
Then apply \eqref{I:saturate D} to 
$\calD \colonequals \{D_1,\ldots,D_\rho\}$.\qedhere
\end{enumerate}
\end{proof}

\begin{remark}
\label{R:kbar}
Theorems \ref{T:NS_tors} and~\ref{T:NS} hold for $\Xbar$ instead of $\Xsep$:
the same proofs work,
except that we need an algorithm for deciding 
whether a variety has a $\kbar$-point;
fortunately,
this is even easier than deciding whether a variety has a $\ksep$-point!
\end{remark}

\subsection{An alternative approach over finite fields}
\label{S:alternative}

When $k$ is a finite field, 
we can compute $\rk \Num^p \Xsep$
without assuming Hypothesis~\ref{H:compute etale 2}, 
but still assuming $\rmT^p(X,\ell)$ and $\rmE^p(X,\ell)$.
The arguments in this section are mostly well-known.

The following is a variant of Theorem~\ref{T:Num}\eqref{I:rank of Num}.
Recall that for any $(k,X,p,\ell)$ as in Setup~\ref{Setup} with $k$ finite, 
we let $V_\mu$ denote the largest $G$-invariant subspace of 
$V=\HH^{2p}(\Xsep,\Q_\ell(p))$ on which all eigenvalues of the Frobenius are roots of unity.

\begin{theorem}
\label{T:Num variant}
\hfill
\begin{enumerate}[\upshape (a)]
\item \label{I:dim V_mu}
There is an algorithm A that 
takes as input $(k,X,p,\ell)$ as in Setup~\ref{Setup},
with $k$ a finite field $\F_q$,
and returns $\dim V_\mu$.
\item \label{I:algorithm B}
There is an algorithm B 
that takes as input $(k,X,p,\ell)$ as in Setup~\ref{Setup},
with $k$ a finite field $\F_q$,
such that, assuming $\rmE^p(X,\ell)$,
\begin{itemize}
\item algorithm B terminates on this input 
if and only if $\rmT^p(X,\ell)$ holds, and
\item if algorithm B terminates, it returns $\rk \Num^p \Xsep$.
\end{itemize}
\end{enumerate}
\end{theorem}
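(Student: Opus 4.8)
The plan is to derive both parts from Deligne's proof of the Weil conjectures, combined with Proposition~\ref{P:consequences} and the lower-bound machinery of Corollary~\ref{C:lower bounds for Num}. For part~\eqref{I:dim V_mu}, I would first compute the zeta function $Z_X(T)=\prod_{i=0}^{2d}P_i(T)^{(-1)^{i+1}}$ using Proposition~\ref{P:zeta}, where $P_i(T)=\det\bigl(1-\Frob\,T\mid\HH^i(\Xsep,\Q_\ell)\bigr)$ lies in $\Z[T]$, is independent of $\ell$, and (by the Riemann hypothesis part of the Weil conjectures) has all its reciprocal roots of archimedean absolute value $q^{i/2}$. In particular the $P_i$ are pairwise coprime, so there is no cancellation in $Z_X(T)$, and exactly as in the proof of Proposition~\ref{P:betti} one isolates $P_{2p}(T)$: it is the product, with multiplicity, of the irreducible factors of the denominator $\prod_{i\ \mathrm{even}}P_i$ of $Z_X(T)$ whose roots have absolute value $q^{-p}$; membership is decided rigorously since each irreducible factor divides a unique $P_i$, so its roots all share one of the finitely many values $q^{-i/2}$, which can be told apart by computing a single root to sufficient precision.

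Next, $\Frob$ acts on $V=\HH^{2p}(\Xsep,\Q_\ell(p))$ with characteristic reciprocal roots $\alpha/q^{p}$ as $\alpha$ runs over the reciprocal roots of $P_{2p}$, and since $V_\mu$ is $G$-stable with $G$ topologically generated by $\Frob$, the space $V_\mu$ is precisely the sum of the generalized $\Frob$-eigenspaces for eigenvalues that are roots of unity. Hence $\dim V_\mu$ equals the number, counted with multiplicity, of reciprocal roots $\alpha$ of $P_{2p}$ with $\alpha/q^{p}$ a root of unity. I would factor $P_{2p}=\prod_j f_j^{e_j}$ over $\Q$ into irreducibles; since $q^{p}\in\Q$, the Galois conjugates of $q^{p}\zeta$ (with $\zeta$ a root of unity) are again $q^{p}$ times roots of unity, so for each $j$ either all or none of the reciprocal roots of $f_j$ are of this form, and the former holds precisely when $f_j(q^{p}T)$ is a rational multiple of a cyclotomic polynomial $\Phi_M(T)$, which one tests by trying the finitely many $M$ with $\varphi(M)=\deg f_j$. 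Then $\dim V_\mu=\sum_{j\ \mathrm{good}}e_j\deg f_j$, proving part~\eqref{I:dim V_mu}.

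For part~\eqref{I:algorithm B}, algorithm~B runs algorithm~A to obtain $u\colonequals\dim V_\mu$, and in parallel runs the procedure of Corollary~\ref{C:lower bounds for Num} (using Remark~\ref{R:independence in Num^p Xsep}), which prints lower bounds for $\rk\Num^p\Xsep$ that never exceed $\rk\Num^p\Xsep$ but eventually attain it; B halts and returns $u$ as soon as some printed bound equals $u$. Granting $\rmE^p(X,\ell)$, Proposition~\ref{P:consequences} gives $\rk\Num^p\Xsep\le\dim V^{\Tate}\le u$, with equality throughout when $\rmT^p(X,\ell)$ holds, while conversely $\rk\Num^p\Xsep=u$ forces equality of items~(c) and~(d) there and hence $\rmT^p(X,\ell)$. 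Therefore B halts if and only if $\rk\Num^p\Xsep=u$, i.e.\ if and only if $\rmT^p(X,\ell)$ holds, and in that case it outputs $u=\rk\Num^p\Xsep$. The routine factoring and root-of-unity tests aside, the only step needing genuine input is the extraction of $P_{2p}$ from $Z_X(T)$ and the control of the size and integrality of its reciprocal roots, which rest on Deligne's theorem; I expect this --- rather than part~\eqref{I:algorithm B}, which is a short deduction from Proposition~\ref{P:consequences} and Corollary~\ref{C:lower bounds for Num} --- to be the main obstacle.
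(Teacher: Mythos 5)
Your proposal matches the paper's proof in both parts. Part~(b) is verbatim the paper's argument: compute $\dim V_\mu$ via algorithm~A, run the lower-bound procedure of Corollary~\ref{C:lower bounds for Num} in parallel, halt when they agree, and justify termination and correctness via Proposition~\ref{P:consequences}. Part~(a) likewise follows the paper (extract $\dim V_\mu$ from $Z_X(T)$ using the Weil--Riemann hypothesis to sort reciprocal roots by absolute value), and you usefully flesh out the algorithmic step the paper states in one sentence, namely how to decide which reciprocal roots of $P_{2p}$ are $q^p$ times a root of unity. One small slip there: with the convention $P_{2p}(T)=\det(1-\Frob T)$, the roots of $f_j(q^p T)$ are $1/(q^p\beta_i)$, of absolute value $q^{-2p}$, so that polynomial is never a multiple of a cyclotomic; you want $f_j(T/q^p)$ (equivalently, the reverse polynomial of $f_j$ evaluated at $q^pT$), whose roots are $q^p/\beta_i$, and the cyclotomic test then does exactly what you intend.
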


\begin{proof}
\hfill
\begin{enumerate}[\upshape (a)]
\item
By Proposition~\ref{P:zeta} there is an algorithm that computes the 
zeta function $Z_X(T)\in \Q(T)$ of $X$.
Then $\dim V_\mu$ is the number of complex poles $\lambda$ 
of $Z_X(T)$ such that $\lambda$ is a root of unity times $q^{-p}$,
counted with multiplicity.
\item 
Algorithm B
first runs algorithm A to compute $v_\mu \colonequals \dim V_\mu$,
and then runs the algorithm of Corollary~\ref{C:lower bounds for Num}
until it prints $v_\mu$,
in which case algorithm B returns $v_\mu$.
If $\rmT^p(X,\ell)$ and $\rmE^p(X,\ell)$ hold,
Proposition~\ref{P:consequences}
implies that $v_\mu$ equals $\rk \Num^p \Xsep$,
and the algorithm of Corollary~\ref{C:lower bounds for Num} 
eventually prints the latter,
so algorithm B terminates with the correct output.

Assume $\rmE^p(X,\ell)$.
Proposition~\ref{P:consequences} implies that 
$\rk \Num^p \Xsep \le v_\mu$ with equality if and only if 
$\rmT^p(X,\ell)$ holds.
So if algorithm B terminates, then $\rmT^p(X,\ell)$ holds.
\qedhere
\end{enumerate}
\end{proof}

\begin{corollary}
\label{C:NS over finite field under Tate}
There is an algorithm 
to compute $\NS \Xsep$ 
(in the same sense as Theorem~\ref{T:NS}\eqref{I:compute the whole NS})
and its subgroup $\NS X$
for any nice variety $X$ over a finite field
such that $\rmT^1(X,\ell)$ holds for some $\ell$.
\end{corollary}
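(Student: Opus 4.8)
The plan is to assemble ingredients already available: the rank computation of Theorem~\ref{T:Num variant}\eqref{I:algorithm B}, the divisor-producing algorithm of Theorem~\ref{T:NS}\eqref{I:compute the whole NS}, and the identification of $\NS X$ with $(\NS \Xsep)^{G_k}$ over a finite field (Proposition~\ref{P:NS over finite field}). First I would compute $\rho \colonequals \rk \NS \Xsep$. Run algorithm~B of Theorem~\ref{T:Num variant}\eqref{I:algorithm B} with $p = 1$, in parallel for every prime $\ell \ne \kappa$ (the witnessing prime is not part of the input, so none can be singled out in advance). Conjecture~$\rmE^1(X,\ell)$ holds unconditionally for every such $\ell$ by Remark~\ref{R:E^1}, so Theorem~\ref{T:Num variant}\eqref{I:algorithm B} guarantees that each of these runs, if it halts, returns $\rk \Num^1 \Xsep$, and that the run for any $\ell$ with $\rmT^1(X,\ell)$ true does halt. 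By hypothesis at least one such $\ell$ exists, so some run eventually halts, and it returns $\rk \Num^1 \Xsep$. Finally $\rk \Num^1 \Xsep = \rk \NS \Xsep$, since numerically trivial divisor classes form a finite subgroup of the finitely generated group $\NS \Xsep$ (numerical equivalence of divisors coincides with algebraic equivalence up to torsion). Set $\rho$ to this common value.

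With $\rho = \rk \NS \Xsep$ in hand, apply Theorem~\ref{T:NS}\eqref{I:compute the whole NS} to compute the $G_k$-homomorphism $\calZ^1(\Xsep) \to \NS \Xsep$ in the sense of Definition~\ref{D:map on cycles}; this is exactly the requested description of $\NS \Xsep$, and in particular it exhibits $\NS \Xsep$ as an explicit finitely generated module over $\Gal(k'/k)$ for a suitable finite Galois extension $k'/k$. To obtain the subgroup $\NS X$, use that $k$ is finite: by Proposition~\ref{P:NS over finite field} the natural map $\NS X \to (\NS \Xsep)^{G_k}$ is an isomorphism, and $(\NS \Xsep)^{G_k} = (\NS \Xsep)^{\Gal(k'/k)}$ can be read off directly from the explicit $\Gal(k'/k)$-module structure just computed. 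If explicit divisors on $X$ generating $\NS X$ are also wanted, enumerate $z \in \calZ^1(X)$, compute the class of each via the homomorphism above (base-changing $z$ to $X_{k'}$ if necessary to feed it into the algorithm of Definition~\ref{D:map on cycles}), and stop once the classes found generate $(\NS \Xsep)^{G_k}$; this terminates because, again by Proposition~\ref{P:NS over finite field}, every element of $(\NS \Xsep)^{G_k}$ is the class of a divisor on $X$.

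There is no genuinely new difficulty, so there is no single hard step; the two points requiring care are that algorithm~B must be dovetailed over all primes $\ell \ne \kappa$, since the prime witnessing $\rmT^1$ is not given, and that one must invoke the unconditional Remark~\ref{R:E^1} so that algorithm~B's output is meaningful with $\rmT^1(X,\ell)$ for a single $\ell$ as the only standing hypothesis.
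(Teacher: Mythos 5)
Your proposal is correct and follows essentially the same route as the paper: apply Theorem~\ref{T:Num variant}\eqref{I:algorithm B} with $p=1$ (using Remark~\ref{R:E^1} for the unconditional validity of $\rmE^1$) to get the rank, feed that into Theorem~\ref{T:NS}\eqref{I:compute the whole NS} to get the $G_k$-module $\NS \Xsep$, and extract $\NS X = (\NS\Xsep)^{G_k}$ via Proposition~\ref{P:NS over finite field}. One place where you are more careful than the paper's terse proof is the observation that the witnessing prime $\ell$ is not part of the input, so algorithm~B must be dovetailed over all $\ell \ne \kappa$; the paper's one-line proof leaves this implicit (it would otherwise need the $\ell$-independence of $\rmT^1$ over finite fields), and your handling avoids invoking that independence. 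You also make explicit the identification $\rk\Num^1\Xsep = \rk\NS\Xsep$, which the paper silently uses.
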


\begin{proof}
Apply Theorem~\ref{T:Num variant}\eqref{I:algorithm B},
using that $\rmE^p(X,\ell)$ holds for $p=1$,
to obtain $\rk \NS \Xsep$.
Then Theorem~\ref{T:NS}\eqref{I:compute the whole NS} 
lets us compute the Galois module $\NS \Xsep$.
By Proposition~\ref{P:NS over finite field},
computing its $G_k$-invariant subgroup yields $\NS X$.
\end{proof}

\subsection{K3 surfaces}
\label{S:K3}

We now apply our results to K3 surfaces,
to improve upon the results of \cite{Charles2014} 
and \cite{Hassett-Kresch-Tschinkel2013}
mentioned in Section~\ref{S:previous approaches}.

\begin{theorem}
\label{T:unconditional NS for K3}
There is an unconditional algorithm
to compute the $G_k$-module $\NS \Xsep$ for any K3 surface $X$ 
over a finitely generated field $k$ of characteristic not~$2$.
We can also compute the group $(\NS \Xsep)^{G_k}$,
in which $\NS X$ has finite index.
If $k$ is finite, we can compute $\NS X$ itself.
\end{theorem}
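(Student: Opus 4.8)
The plan is to combine the general machinery developed above with the known truth of the Tate conjecture for K3 surfaces in characteristic $\ne 2$. First I would invoke the works of Nygaard, Nygaard--Ogus, Maulik, Charles, and Madapusi Pera to conclude that $\rmT^1(X,\ell)$ holds for every K3 surface $X$ over a finitely generated field $k$ with $\kappa \ne 2$, for every prime $\ell \ne \kappa$; Conjecture $\rmE^1(X,\ell)$ is unconditional by Remark~\ref{R:E^1}. The remaining task is to discharge Hypothesis~\ref{H:compute etale 2}. In characteristic $0$ this is Theorem~\ref{T:compute etale in char 0}, so the genuinely new point is positive characteristic. Here I would use that every K3 surface over a field of characteristic $\kappa > 2$ lifts to a nice variety in characteristic $0$ (by Deligne's lifting theorem for K3 surfaces, together with projectivity of the lift, which one may arrange); Corollary~\ref{C:compute etale by lifting} then applies and gives Hypothesis~\ref{H:compute etale 2} for such $X$.

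With $\rmT^1(X,\ell)$, $\rmE^1(X,\ell)$, and Hypothesis~\ref{H:compute etale 2} all in hand, Remark~\ref{R:NS} applies verbatim: Theorem~\ref{T:Num}\eqref{I:rank of Num} computes $\rho \colonequals \rk \NS \Xsep$, and then Theorem~\ref{T:NS}\eqref{I:compute the whole NS} computes the $G_k$-module $\NS \Xsep$ (including its torsion subgroup, via Theorem~\ref{T:NS_tors}, and the map from $\calZ^1(\Xsep)$). From the $G_k$-module $\NS \Xsep$ we can compute the invariant subgroup $(\NS \Xsep)^{G_k}$ purely group-theoretically, since the Galois action factors through a finite quotient $\Gal(k'/k)$ that is part of the output. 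By Proposition~\ref{P:Num X} (the $\NS$ version of~\eqref{I:Num X has finite index}) the image of $\NS X$ in $(\NS \Xsep)^{G_k}$ has finite index; in fact for a K3 surface $\NS X \to (\NS \Xsep)^{G_k}$ is injective with finite cokernel, which is all that is claimed. Finally, if $k$ is finite, Proposition~\ref{P:NS over finite field} identifies $\NS X$ with $(\NS \Xsep)^{G_k}$ exactly, so computing the invariants of the $G_k$-module already computes $\NS X$ itself. (Alternatively, over a finite field one may bypass Hypothesis~\ref{H:compute etale 2} entirely and use Theorem~\ref{T:Num variant}\eqref{I:algorithm B} together with Corollary~\ref{C:NS over finite field under Tate}.)

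The main obstacle is the positive-characteristic input to Hypothesis~\ref{H:compute etale 2}: one must know both that a projective lift to characteristic $0$ exists and that the lifting datum can be searched for effectively. The existence is classical for K3 surfaces in characteristic $> 2$ (Deligne, with later refinements), and effectivity is exactly the point of Corollary~\ref{C:compute etale by lifting}, which only asks that \emph{some} lift exist so that a blind search terminates. The characteristic~$2$ exclusion is forced both by the current status of the Tate conjecture for K3 surfaces and by the lifting results, which is why the theorem is stated for $\kappa \ne 2$.
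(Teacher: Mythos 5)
Your proposal matches the paper's proof step for step: Deligne's lifting to characteristic $0$ to discharge Hypothesis~\ref{H:compute etale 2} via Corollary~\ref{C:compute etale by lifting}, Madapusi Pera for $\rmT^1(X,\ell)$, Remark~\ref{R:E^1} for $\rmE^1(X,\ell)$, Remark~\ref{R:NS} to compute $\NS\Xsep$, Proposition~\ref{P:Num X} for the finite-index claim, and Proposition~\ref{P:NS over finite field} for the case of $k$ finite. The only additions are harmless elaborations (the aside about projectivity of the lift, and the finite-field alternative via Theorem~\ref{T:Num variant}, which the paper mentions in a separate remark); the argument is correct and essentially identical to the paper's.
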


\begin{proof}
By \cite{Deligne1981}, K3 surfaces lift to characteristic~$0$.
By \cite{Madapusi-preprint}*{Theorem~1}, 
$\rmT^1(X,\ell)$ holds for any K3 surface $X$ 
over a finitely generated field $k$ of characteristic not~$2$.
Hence Remark~\ref{R:NS} lets us compute the $G_k$-module $\NS \Xsep$.
{}From this we obtain $(\NS \Xsep)^{G_k}$.
By Proposition~\ref{P:Num X}, $\NS X$ is of finite index in $(\NS \Xsep)^{G_k}$.
If $k$ is finite, 
then $\NS X = (\NS \Xsep)^{G_k}$ by Proposition~\ref{P:NS over finite field}.
\end{proof}

\begin{remark}
For K3 surfaces $X$ over a finite field $k$ of characteristic not~$2$,
Corollary~\ref{C:NS over finite field under Tate}
yields another way to compute $\NS \Xsep$,
without lifting to characteristic~$0$,
but still using \cite{Madapusi-preprint}*{Theorem~1}.
\end{remark}

\section*{Acknowledgements} 

We thank 
Saugata Basu, 
Fran\c{c}ois Charles,
Bas Edixhoven, 
Robin Hartshorne, 
David Holmes, 
Moshe Jarden, 
J\'anos Koll\'ar,
Andrew Kresch, 
Martin Olsson, 
Lenny Taelman, 
Burt Totaro, 
David Vogan, 
Claire Voisin, 
Olivier Wittenberg,
and the referee
for helpful comments.
We thank the Banff International Research Station, 
the American Institute of Mathematics,
the Centre Interfacultaire Bernoulli,
and the Mathematisches Forschungsinstitut Oberwolfach
for their hospitality and support.

\begin{bibdiv}
\begin{biblist}


\bib{Andre1996b}{article}{
  author={Andr{\'e}, Yves},
  title={On the Shafarevich and Tate conjectures for hyper-K\"ahler varieties},
  journal={Math. Ann.},
  volume={305},
  date={1996},
  number={2},
  pages={205--248},
  issn={0025-5831},
  review={\MR {1391213 (97a:14010)}},
  doi={10.1007/BF01444219},
}

\bib{Artin1971}{article}{
  author={Artin, M.},
  title={On the joins of Hensel rings},
  journal={Advances in Math.},
  volume={7},
  date={1971},
  pages={282--296 (1971)},
  issn={0001-8708},
  review={\MR {0289501 (44 \#6690)}},
}

\bib{Basu-Pollack-Roy2006}{book}{
  author={Basu, Saugata},
  author={Pollack, Richard},
  author={Roy, Marie-Fran{\c {c}}oise},
  title={Algorithms in real algebraic geometry},
  series={Algorithms and Computation in Mathematics},
  volume={10},
  edition={2},
  publisher={Springer-Verlag},
  place={Berlin},
  date={2006},
  pages={x+662},
  isbn={978-3-540-33098-1},
  isbn={3-540-33098-4},
  review={\MR {2248869 (2007b:14125)}},
}

\bib{Bosch-Lutkebohmert-Raynaud1990}{book}{
  author={Bosch, Siegfried},
  author={L{\"u}tkebohmert, Werner},
  author={Raynaud, Michel},
  title={N\'eron models},
  series={Ergebnisse der Mathematik und ihrer Grenzgebiete (3) [Results in Mathematics and Related Areas (3)]},
  volume={21},
  publisher={Springer-Verlag},
  place={Berlin},
  date={1990},
  pages={x+325},
  isbn={3-540-50587-3},
  review={\MR {1045822 (91i:14034)}},
}

\bib{Charles2013}{article}{
  author={Charles, Fran{\c {c}}ois},
  title={The Tate conjecture for $K3$ surfaces over finite fields},
  journal={Invent. Math.},
  volume={194},
  date={2013},
  number={1},
  pages={119--145},
  issn={0020-9910},
  review={\MR {3103257}},
  doi={10.1007/s00222-012-0443-y},
}

\bib{Charles2014}{article}{
  author={Charles, Fran{\c {c}}ois},
  title={On the Picard number of K3 surfaces over number fields},
  journal={Algebra Number Theory},
  volume={8},
  date={2014},
  number={1},
  pages={1--17},
  issn={1937-0652},
  review={\MR {3207577}},
  doi={10.2140/ant.2014.8.1},
}

\bib{Deligne1981}{article}{
  author={Deligne, P.},
  title={Rel\`evement des surfaces $K3$ en caract\'eristique nulle},
  language={French},
  note={Prepared for publication by Luc Illusie},
  conference={ title={Algebraic surfaces}, address={Orsay}, date={1976--78}, },
  book={ series={Lecture Notes in Math.}, volume={868}, publisher={Springer}, place={Berlin}, },
  date={1981},
  pages={58--79},
  review={\MR {638598 (83j:14034)}},
}

\bib{EGA-IV.IV}{article}{
  author={Grothendieck, A.},
  title={\'El\'ements de g\'eom\'etrie alg\'ebrique. IV. \'Etude locale des sch\'emas et des morphismes de sch\'emas IV},
  language={French},
  journal={Inst. Hautes \'Etudes Sci. Publ. Math.},
  number={32},
  date={1967},
  pages={361},
  issn={0073-8301},
  review={\MR {0238860 (39 \#220)}},
  label={EGA~$\hbox {IV}_4$},
}

\bib{Eisenbud1995}{book}{
  author={Eisenbud, David},
  title={Commutative algebra},
  series={Graduate Texts in Mathematics},
  volume={150},
  note={With a view toward algebraic geometry},
  publisher={Springer-Verlag},
  place={New York},
  date={1995},
  pages={xvi+785},
  isbn={0-387-94268-8},
  isbn={0-387-94269-6},
  review={\MR {1322960 (97a:13001)}},
}

\bib{Elsenhans-Jahnel2011}{article}{
  author={Elsenhans, Andreas-Stephan},
  author={Jahnel, J{\"o}rg},
  title={On the computation of the Picard group for $K3$ surfaces},
  journal={Math. Proc. Cambridge Philos. Soc.},
  volume={151},
  date={2011},
  number={2},
  pages={263--270},
  issn={0305-0041},
  review={\MR {2823134 (2012i:14015)}},
  doi={10.1017/S0305004111000326},
}

\bib{Elsenhans-Jahnel2011-oneprime}{article}{
  author={Elsenhans, Andreas-Stephan},
  author={Jahnel, J{\"o}rg},
  title={The Picard group of a $K3$ surface and its reduction modulo $p$},
  journal={Algebra Number Theory},
  volume={5},
  date={2011},
  number={8},
  pages={1027--1040},
  issn={1937-0652},
}

\bib{Gotzmann1978}{article}{
  author={Gotzmann, Gerd},
  title={Eine Bedingung f\"ur die Flachheit und das Hilbertpolynom eines graduierten Ringes},
  language={German},
  journal={Math. Z.},
  volume={158},
  date={1978},
  number={1},
  pages={61--70},
  issn={0025-5874},
  review={\MR {0480478 (58 \#641)}},
}

\bib{Grothendieck-standard-conjectures}{article}{
  author={Grothendieck, A.},
  title={Standard conjectures on algebraic cycles},
  conference={ title={Algebraic Geometry (Internat. Colloq., Tata Inst. Fund. Res., Bombay, 1968)}, },
  book={ publisher={Oxford Univ. Press, London}, },
  date={1969},
  pages={193--199},
  review={\MR {0268189 (42 \#3088)}},
}

\bib{Haran1988}{article}{
  author={Haran, Dan},
  title={Quantifier elimination in separably closed fields of finite imperfectness degree},
  journal={J. Symbolic Logic},
  volume={53},
  date={1988},
  number={2},
  pages={463--469},
  issn={0022-4812},
  review={\MR {947853 (89i:03057)}},
  doi={10.2307/2274518},
}

\bib{Hassett-Kresch-Tschinkel2013}{article}{
  author={Hassett, Brendan},
  author={Kresch, Andrew},
  author={Tschinkel, Yuri},
  title={Effective computation of Picard groups and Brauer--Manin obstructions of degree two $K3$ surfaces over number fields},
  journal={Rend. Circ. Mat. Palermo (2)},
  volume={62},
  date={2013},
  number={1},
  pages={137--151},
  issn={0009-725X},
  review={\MR {3031574}},
  doi={10.1007/s12215-013-0116-8},
}

\bib{Hermann1926}{article}{
  author={Hermann, Grete},
  title={Die Frage der endlich vielen Schritte in der Theorie der Polynomideale},
  language={German},
  journal={Math. Ann.},
  volume={95},
  date={1926},
  number={1},
  pages={736--788},
  issn={0025-5831},
  review={\MR {1512302}},
  doi={10.1007/BF01206635},
}

\bib{Iarrobino-Kanev1999}{book}{
  author={Iarrobino, Anthony},
  author={Kanev, Vassil},
  title={Power sums, Gorenstein algebras, and determinantal loci},
  series={Lecture Notes in Mathematics},
  volume={1721},
  note={Appendix C by Iarrobino and Steven L. Kleiman},
  publisher={Springer-Verlag},
  place={Berlin},
  date={1999},
  pages={xxxii+345},
  isbn={3-540-66766-0},
  review={\MR {1735271 (2001d:14056)}},
}

\bib{Kahn2009}{article}{
  author={Kahn, Bruno},
  title={D\'emonstration g\'eom\'etrique du th\'eor\`eme de Lang-N\'eron et formules de Shioda-Tate},
  language={French, with English and French summaries},
  conference={ title={Motives and algebraic cycles}, },
  book={ series={Fields Inst. Commun.}, volume={56}, publisher={Amer. Math. Soc.}, place={Providence, RI}, },
  date={2009},
  pages={149--155},
  review={\MR {2562456 (2010j:14083)}},
}

\bib{Katz2001}{article}{
  author={Katz, Nicholas M.},
  title={Sums of Betti numbers in arbitrary characteristic},
  note={Dedicated to Professor Chao Ko on the occasion of his 90th birthday},
  journal={Finite Fields Appl.},
  volume={7},
  date={2001},
  number={1},
  pages={29--44},
  issn={1071-5797},
  review={\MR {1803934 (2002d:14028)}},
}

\bib{Keeler2008}{article}{
  author={Keeler, Dennis S.},
  title={Fujita's conjecture and Frobenius amplitude},
  journal={Amer. J. Math.},
  volume={130},
  date={2008},
  number={5},
  pages={1327--1336},
  issn={0002-9327},
  review={\MR {2450210 (2009i:14006)}},
  doi={10.1353/ajm.0.0015},
}

\bib{Kleiman2005}{article}{
  author={Kleiman, Steven L.},
  title={The Picard scheme},
  conference={ title={Fundamental algebraic geometry}, },
  book={ series={Math. Surveys Monogr.}, volume={123}, publisher={Amer. Math. Soc.}, place={Providence, RI}, },
  date={2005},
  pages={235--321},
  review={\MR {2223410}},
}

\bib{Kloosterman2007}{article}{
  author={Kloosterman, Remke},
  title={Elliptic $K3$ surfaces with geometric Mordell-Weil rank 15},
  journal={Canad. Math. Bull.},
  volume={50},
  date={2007},
  number={2},
  pages={215--226},
  issn={0008-4395},
  review={\MR {2317444 (2008f:14055)}},
  doi={10.4153/CMB-2007-023-2},
}

\bib{Lang1956}{article}{
  author={Lang, Serge},
  title={Algebraic groups over finite fields},
  journal={Amer. J. Math.},
  volume={78},
  date={1956},
  pages={555--563},
  issn={0002-9327},
  review={\MR {0086367 (19,174a)}},
}

\bib{Liu2002}{book}{
  author={Liu, Qing},
  title={Algebraic geometry and arithmetic curves},
  series={Oxford Graduate Texts in Mathematics},
  volume={6},
  note={Translated from the French by Reinie Ern\'e; Oxford Science Publications},
  publisher={Oxford University Press},
  place={Oxford},
  date={2002},
  pages={xvi+576},
  isbn={0-19-850284-2},
  review={\MR {1917232 (2003g:14001)}},
}

\bib{Madapusi-preprint}{misc}{
  author={Madapusi Pera, Keerthi},
  title={The Tate conjecture for K3 surfaces in odd characteristic},
  date={2014-06-04},
  note={Preprint, \texttt {arXiv:1301.6326v3}},
}

\bib{Madore-Orgogozo-preprint}{misc}{
  author={Madore, David A.},
  author={Orgogozo, Fabrice},
  title={Calculabilit\'e de la cohomologie \'etale modulo $\ell $},
  date={2014-07-04},
  note={Preprint, \texttt {arXiv:arXiv:1304.5376v3}},
}

\bib{Mannoury1900}{article}{
  author={Mannoury, G.},
  title={Surfaces-images},
  journal={Nieuw Arch. Wisk. (2)},
  volume={4},
  date={1900},
  pages={112-–129},
}

\bib{Maulik-preprint}{misc}{
  author={Maulik, Davesh},
  title={Supersingular K3 surfaces for large primes},
  date={2012-04-07},
  note={Preprint, \texttt {arXiv:1203.2889v2}},
}

\bib{Maulik-Poonen2012}{article}{
  author={Maulik, Davesh},
  author={Poonen, Bjorn},
  title={N\'eron-Severi groups under specialization},
  journal={Duke Math. J.},
  volume={161},
  date={2012},
  number={11},
  pages={2167--2206},
  issn={0012-7094},
  review={\MR {2957700}},
  doi={10.1215/00127094-1699490},
}

\bib{MilneEtaleCohomology1980}{book}{
  author={Milne, J. S.},
  title={\'Etale cohomology},
  series={Princeton Mathematical Series},
  volume={33},
  publisher={Princeton University Press},
  place={Princeton, N.J.},
  date={1980},
  pages={xiii+323},
  isbn={0-691-08238-3},
  review={\MR {559531 (81j:14002)}},
}

\bib{Minkowski1887}{article}{
  author={Minkowski, H.},
  title={Zur Theorie der positiven quadratischen Formen},
  journal={J. reine angew. Math.},
  volume={101},
  date={1887},
  pages={196--202},
}

\bib{Neron1952}{article}{
  author={N{\'e}ron, Andr{\'e}},
  title={Probl\`emes arithm\'etiques et g\'eom\'etriques rattach\'es \`a la notion de rang d'une courbe alg\'ebrique dans un corps},
  language={French},
  journal={Bull. Soc. Math. France},
  volume={80},
  date={1952},
  pages={101--166},
  issn={0037-9484},
  review={\MR {0056951 (15,151a)}},
}

\bib{Nygaard1983}{article}{
  author={Nygaard, N. O.},
  title={The Tate conjecture for ordinary $K3$ surfaces over finite fields},
  journal={Invent. Math.},
  volume={74},
  date={1983},
  number={2},
  pages={213--237},
  issn={0020-9910},
  review={\MR {723215 (85h:14012)}},
  doi={10.1007/BF01394314},
}

\bib{Nygaard-Ogus1985}{article}{
  author={Nygaard, Niels},
  author={Ogus, Arthur},
  title={Tate's conjecture for $K3$ surfaces of finite height},
  journal={Ann. of Math. (2)},
  volume={122},
  date={1985},
  number={3},
  pages={461--507},
  issn={0003-486X},
  review={\MR {819555 (87h:14014)}},
  doi={10.2307/1971327},
}

\bib{Oguiso2009}{article}{
  author={Oguiso, Keiji},
  title={Shioda-Tate formula for an abelian fibered variety and applications},
  journal={J. Korean Math. Soc.},
  volume={46},
  date={2009},
  number={2},
  pages={237--248},
  issn={0304-9914},
  review={\MR {2494474 (2009m:14011)}},
  doi={10.4134/JKMS.2009.46.2.237},
}

\bib{SGA4.5}{book}{
  author={Deligne, P.},
  title={Cohomologie \'etale},
  series={Lecture Notes in Mathematics, Vol. 569},
  note={S\'eminaire de G\'eom\'etrie Alg\'ebrique du Bois-Marie SGA $4\frac 12$; Avec la collaboration de J. F. Boutot, A. Grothendieck, L. Illusie et J. L. Verdier},
  publisher={Springer-Verlag},
  place={Berlin},
  date={1977},
  pages={iv+312pp},
  review={\MR {0463174 (57 \#3132)}},
  label={SGA $4\frac 12$},
}

\bib{SGA6}{book}{
  title={Th\'eorie des intersections et th\'eor\`eme de Riemann-Roch},
  language={French},
  series={Lecture Notes in Mathematics, Vol. 225},
  note={S\'eminaire de G\'eom\'etrie Alg\'ebrique du Bois-Marie 1966--1967 (SGA 6); Dirig\'e par P. Berthelot, A. Grothendieck et L. Illusie. Avec la collaboration de D. Ferrand, J. P. Jouanolou, O. Jussila, S. Kleiman, M. Raynaud et J. P. Serre},
  publisher={Springer-Verlag},
  place={Berlin},
  date={1971},
  pages={xii+700},
  review={\MR {0354655 (50 \#7133)}},
  label={SGA 6},
}

\bib{Shioda1972}{article}{
  author={Shioda, Tetsuji},
  title={On elliptic modular surfaces},
  journal={J. Math. Soc. Japan},
  volume={24},
  date={1972},
  pages={20--59},
  issn={0025-5645},
  review={\MR {0429918 (55 \#2927)}},
}

\bib{Shioda1986}{article}{
  author={Shioda, Tetsuji},
  title={An explicit algorithm for computing the Picard number of certain algebraic surfaces},
  journal={Amer. J. Math.},
  volume={108},
  date={1986},
  number={2},
  pages={415--432},
  issn={0002-9327},
  review={\MR {833362 (87g:14033)}},
  doi={10.2307/2374678},
}

\bib{Shioda1990}{article}{
  author={Shioda, Tetsuji},
  title={On the Mordell-Weil lattices},
  journal={Comment. Math. Univ. St. Paul.},
  volume={39},
  date={1990},
  number={2},
  pages={211--240},
  issn={0010-258X},
  review={\MR {1081832 (91m:14056)}},
}

\bib{Simpson2008}{article}{
  author={Simpson, Carlos},
  title={Algebraic cycles from a computational point of view},
  journal={Theoret. Comput. Sci.},
  volume={392},
  date={2008},
  number={1-3},
  pages={128--140},
  issn={0304-3975},
  review={\MR {2394989 (2008m:14021)}},
  doi={10.1016/j.tcs.2007.10.008},
}

\bib{Tate1976}{article}{
  author={Tate, John},
  title={Relations between $K_{2}$ and Galois cohomology},
  journal={Invent. Math.},
  volume={36},
  date={1976},
  pages={257--274},
  issn={0020-9910},
  review={\MR {0429837 (55 \#2847)}},
}

\bib{Tate1994}{article}{
  author={Tate, John},
  title={Conjectures on algebraic cycles in $l$-adic cohomology},
  conference={ title={Motives}, address={Seattle, WA}, date={1991}, },
  book={ series={Proc. Sympos. Pure Math.}, volume={55}, publisher={Amer. Math. Soc.}, place={Providence, RI}, },
  date={1994},
  pages={71--83},
  review={\MR {1265523 (95a:14010)}},
}

\bib{Tate1995}{article}{
  author={Tate, John},
  title={On the conjectures of Birch and Swinnerton-Dyer and a geometric analog},
  conference={ title={S\'eminaire Bourbaki, Vol.\ 9}, },
  book={ publisher={Soc. Math. France}, place={Paris}, },
  date={1995},
  pages={Exp.\ No.\ 306, 415--440},
  review={\MR {1610977}},
}

\bib{VanLuijk2007}{article}{
  author={van Luijk, Ronald},
  title={K3 surfaces with Picard number one and infinitely many rational points},
  journal={Algebra Number Theory},
  volume={1},
  date={2007},
  number={1},
  pages={1--15},
  issn={1937-0652},
  review={\MR {2322921 (2008d:14058)}},
}

\bib{VanLuijk2007-Heron}{article}{
  author={van Luijk, Ronald},
  title={An elliptic $K3$ surface associated to Heron triangles},
  journal={J. Number Theory},
  volume={123},
  date={2007},
  number={1},
  pages={92--119},
  issn={0022-314X},
  review={\MR {2295433 (2007k:14077)}},
  doi={10.1016/j.jnt.2006.06.006},
}

\bib{Vasconcelos1998}{book}{
  author={Vasconcelos, Wolmer V.},
  title={Computational methods in commutative algebra and algebraic geometry},
  series={Algorithms and Computation in Mathematics},
  volume={2},
  note={With chapters by David Eisenbud, Daniel R. Grayson, J\"urgen Herzog and Michael Stillman},
  publisher={Springer-Verlag},
  place={Berlin},
  date={1998},
  pages={xii+394},
  isbn={3-540-60520-7},
  review={\MR {1484973 (99c:13048)}},
  doi={10.1007/978-3-642-58951-5},
}

\end{biblist}
\end{bibdiv}

\end{document}